\newcounter{saveenum}
\DeclareMathOperator{\Sp}{\mathrm{Sp}}
\newcommand{\red}{{red}}
\newcommand{\sm}{{\mathrm{reg}}}
\renewcommand{\restr}[1]{{\raisebox{-0.0\height}{$\mid_{#1}$}}}
\renewcommand{\epsilon}{\varepsilon}
\renewcommand{\phi}{\varphi}
\newcommand{\Bs}{\mathrm{Bs}}
\newcommand{\supp}{\mathrm{supp}}
\title[Lagrangian fibrations on hyperk\"ahler manifolds]{Lagrangian fibrations on
hyperk\"ahler manifolds\\ --- \\On a question of Beauville}
\author{Daniel Greb}
\address{Daniel Greb\\Institut f\"ur Mathematik\\Abteilung f\"ur Reine Mathematik\\Albert--Ludiwgs--Universit\"at Freiburg\\Eckerstra{\ss}e 1\\79104 Freiburg im Breisgau\\Germany}
\email{daniel.greb@math.uni-freiburg.de}
\author{Christian Lehn}
\address{Christian Lehn\\Institut de Recherche Math\'ematique  
Avanc\'ee\\ Universit\'e de Strasbourg\\
7 rue Ren\'e Descartes\\67084 Strasbourg Cedex\\France}
\email{lehn@math.unistra.fr}
\author{S\"onke Rollenske}
\address{S\"onke Rollenske\\Fakult\"at f\"ur Mathematik\\Universt\"at Bielefeld\\Universit\"atsstr. 25\\33615 Bielefeld\\Germany}
\email{rollenske@math.uni-bielefeld.de}
\begin{document}
\begin{abstract}
Let $X$ be a compact hyperk\"ahler manifold containing a complex torus $L$ as a Lagrangian subvariety. Beauville posed the question whether $X$ admits a Lagrangian fibration with fibre $L$. We show that this is indeed the case if $X$ is not projective. If $X$ is projective we find an almost holomorphic Lagrangian fibration with fibre $L$ under additional assumptions on the pair $(X, L)$, which can be formulated in topological or deformation--theoretic terms. Moreover, we show that for any such almost holomorphic Lagrangian fibration there exists a smooth good minimal model, i.e., a hyperk\"ahler manifold birational to $X$ on which the fibration is holomorphic.
\end{abstract}
\subjclass[2010]{53C26, 14D06, 14E30, 32G10, 32G05.}
\keywords{hyperk\"ahler manifold, Lagrangian fibration.\\\emph{Mots cl\'es:} vari\'et\'e hyperk\"ahl\'erienne, fibration lagrangienne}
\maketitle

\begin{center}
\begin{minipage}{.9\textwidth}
\begin{center}
\uppercase{ \bf Fibrations lagrangiennes des vari\'et\'es hyperk\"ahl\'eriennes\\  --- \\sur une question de Beauville}
\end{center}
\end{minipage}
\end{center}

\begin{abstract}
Soit $X$ une vari\'et\'e hyperk\"ahl\'erienne compacte contenant
un tore complexe $L$ comme sous--vari\'et\'e lagrangienne. A.
Beauville a pos\'e la question suivante : la vari\'et\'e $X$
admet-elle une fibration lagrangienne de fibre $L$? Nous d\'emontrons
que c{\textquotesingle}est le cas si $X$ n{\textquotesingle}est pas
projective. Si $X$ est projective nous montrons l'existence d'une
fibration lagrangienne presque holomorphe de fibre $L$ sous des
hypoth\`eses plus restrictives sur la paire $(X,L)$. Ces hypoth\`eses
peuvent se formuler de deux mani\`eres : en termes topologiques ou
gr\^ace \`a la th\'eorie des d\'eformations de $(X,L)$. Par ailleurs,
nous d\'emontrons que pour une telle fibration lagrangienne presque
holomorphe il y a toujours un bon mod\`ele minimal lisse,
c{\textquotesingle}est-\`a-dire une vari\'et\'e hyperk\"ahl\'erienne
birationelle \`a $X$ sur laquelle la fibration est holomorphe.
\end{abstract}

\tableofcontents

\section*{Introduction}
By the classical decomposition theorem of Beauville--Bogomolov, every compact K\"ahler manifold with vanishing first Chern class admits a finite cover which decomposes as a product of tori, Calabi--Yau manifolds, and hyperk\"ahler manifolds, see e.g.~\cite[Thm.~1]{beauville83}. While tori are quite well-understood, a  classification of Calabi--Yau and hyperk\"ahler manifolds is still far out of reach. Only in dimension 2, where Calabi--Yau and hyperk\"ahler manifolds coincide,  the theory of K3--surfaces provides a fairly complete picture.

Let now $X$ be a hyperk\"ahler manifold, that is, a compact, simply--connected K\"ahler manifold  $X$  such that $H^0(X, \Omega^2_X)$ is spanned by a holomorphic symplectic form  $\sigma$.
>From a differential geometric point of view hyperk\"ahler manifolds are Riemannian manifolds with holonomy the full unitary--symplectic group $\Sp(n)$.

An important step in the structural understanding of a manifold is to decide whether there is a fibration  $f\colon X\to B$  over a complex space of smaller dimension. For hyperk\"ahler manifolds it is known that in case such $f$ exists, it is a \emph{Lagrangian fibration}:  $\dim X = 2 \dim B$, and the holomorphic symplectic form $\sigma$ restricts to zero on the general fibre. Additionally, by the Arnold--Liouville theorem the general fibre is a smooth Lagrangian torus, see Section~\ref{sect: lagrfibr} for a detailed discussion.

In accordance with the case of K3--surfaces (and also motivated by mirror symmetry) a simple version of the so--called Hyperk\"ahler SYZ--conjecture\footnote{We refer the reader to \cite{verbitsky10} for a historical discussion concerning the emergence of this conjecture.}
asks if every hyperk\"ahler manifold can be deformed to a hyperk\"ahler manifold admitting a Lagrangian fibration. With this as a starting point, an approach to a rough classification of hyperk\"ahler manifolds  has been proposed, see e.g.~\cite{sawon03}. A more sophisticated version of the SYZ--conjecture is discussed in Section~\ref{sect:SYZ}.

Here we approach the question of existence of a Lagrangian fibration on a given hyperk\"ahler manifold $X$ under a geometric assumption proposed by Beauville \cite[Sect.~1.6]{beauville10}:

\begin{custom}[Question B]
Let $X$ be a hyperk\"ahler manifold and $L\subset X$ a
Lagrangian submanifold biholomorphic to a complex torus. Is $L$ a fibre of a (meromorphic) Lagrangian fibration $f\colon X\to B$?
\end{custom}

Building on work of Campana, Oguiso, and Peternell \cite{cop} we give a positive answer in case $X$ is not projective.
\begin{custom}[Theorem~\ref{mainnonproj}]
Let $X$ be a non--projective hyperk\"ahler manifold of dimension $2n$ containing a Lagrangian subtorus $L$. Then the algebraic dimension of $X$ is $n$, and there exists an algebraic reduction $f\colon  X \to B$ of $X$ that is a holomorphic Lagrangian fibration with fibre $L$.
\end{custom}

In the case of \emph{projective} hyperk\"ahler manifold $X$ containing a Lagrangian subtorus $L$, we work out a necessary and sufficient criterion for the existence of an almost holomorphic fibration with fibre $L$, i.e., for a slightly weaker positive answer to Beauville's question. 
\begin{custom}[Theorem~\ref{notstabproj}]
Let $X$ be a projective hyperk\"ahler manifold and $L\subset X$ a Lagrangian subtorus. Then the following are equivalent.
\begin{enumerate}
\item $X$ admits an almost holomorphic Lagrangian fibration with strong fibre $L$.
 \item The pair $(X,L)$ admits a small deformation $(X', L')$ with non-projective $X'$.
\item There exists an effective divisor $D$ on $X$ such that $c_1(\mathscr{O}_X(D)|_L) = 0 \in H^{1,1}\bigl(L, \, \mathbb{R} \bigr)$. 
\end{enumerate}
\end{custom}
Here, \emph{strong fibre} means that $f$ is holomorphic near $L$, and $L$ is a fibre of the corresponding holomorphic map. 
The proof of Theorem~\ref{notstabproj} consists of two major steps: First, assuming the existence of a small deformation of $(X,L)$ to a non-projective pair $(X',L')$, we use Theorem~\ref{mainnonproj} to produce a Lagrangian fibration with fibre $L'$ on $X'$ and then degenerate this fibration to an almost-holomorphic fibration on $(X,L)$ using relative Barlet spaces. Second, the existence of a small deformation to a non-projective pair $(X', L')$ is characterised in terms of periods in $H^2\bigl(X,\, \mathbb{C} \bigr)$. This finally leads to the condition on the existence of a special divisor, as stated in part \refenum{iii} of Theorem~\ref{notstabproj}.\footnote{After this article was written, Jun-Muk Hwang and Richard Weiss posted a preprint \cite{HwangWeiss} in which they prove that the criterion given in part \refenum{iii} of Theorem~\ref{notstabproj} is fulfilled for any projective hyperk\"ahler manifold $X$ containing a Lagrangian subtorus $L$. See also Remark \ref{rem: further devel}.} 


From the discussion above the question arises how far an almost holomorphic fibration is away from answering Beauville's question in the strong form. If $f\colon  X \dasharrow B$ is an almost holomorphic Lagrangian fibration, then it is natural to search for a holomorphic model of $f$ in the same birational equivalence class. This is done in the final section, where using the recent advances in higher--dimensional birational geometry (\cite{BCHM, HaconXu}) the following result is proven.
\begin{custom}[Theorem \normalfont{(see Theorem~\ref{thm: holomorphic model})}]
Let $X$ be a projective hyperk\"ahler manifold with an almost holomorphic Lagrangian fibration $f\colon X \dasharrow B$.  Then there exists a holomorphic model for $f$ on a birational hyperk\"ahler manifold $X'$. In other words, there is a commutative diagram
\[ \xymatrix{ X\ar@{-->}[d]_f\ar@{-->}[r] & X'\ar[d]^{f'}\\ B\ar@{-->}[r]& B'}\]
where $f'$ is a holomorphic Lagrangian fibration on $X'$ and the horizontal maps are birational.
\end{custom}
Theorem~\ref{thm: holomorphic model} proves a special version of the Hyperk\"ahler SYZ--conjecture. Related results were obtained by Amerik and Campana \cite[Thm.~3.6]{ame-cam08} in dimension four. Note furthermore that birational hyperk\"ahler manifolds are deformation--equivalent by work of Huybrechts \cite[Thm.~4.6]{huybrechts99}, so Theorem~\ref{thm: holomorphic model} might also lead to a new approach to the general case of the Hyperk\"ahler SYZ--conjecture.

The connection to this circle of ideas is also manifest in the following generalisation of a result of Matsushita, which we obtain as a corollary of Theorem~\ref{thm: holomorphic model}.
\begin{custom}[Theorem \ref{thm: almost holomorphic is Lagrangian}]
 Let $X$ be a projective hyperk\"ahler manifold and $f\colon X \dasharrow B$ an almost holomorphic map with connected fibres onto a normal projective variety $B$. If $0<\dim B< \dim X$, then $\dim B=\frac 1 2 \dim X$, and $f$ is an almost holomorphic Lagrangian fibration.
\end{custom}


\subsection*{Acknowledgements}
We are grateful to  Brendan Hassett, Daniel Huybrechts, J\'anos Koll\'ar, Alex K\"uronya, Manfred Lehn, Eyal Markman, James McKernan, Keiji Oguiso, Nicolas Perrin, and Chenyang Xu for interesting discussions and suggestions. Moreover, we thank Daniel Barlet, Akira Fujiki, Martin Gulbrandsen, Alan Huckleberry, Matei Toma, and Claire Voisin for answering our questions via email.

The support of the DFG
through the SFB/TR 45 ``Periods, moduli spaces and arithmetic of algebraic varieties'', Forschergruppe 790 ``Classification of Algebraic Surfaces and Compact Complex Manifolds'', and the third author's Emmy--Noether
project was invaluable for the success of the collaboration. During the preparation of the paper, the first author enjoyed the hospitality of the Mathematics Department at Princeton University. He
gratefully acknowledges the support of the
Baden--W\"urttemberg--Stiftung through the ``Eliteprogramm f\"ur
Postdoktorandinnen und Postdoktoranden''. The third author was also partly
supported by the Hausdorff Center for Mathematics in Bonn.

\section{Preliminaries on hyperk\"ahler manifolds}
We collect a few basic definitions and properties of the objects of our study.
\begin{defin}
An \emph{irreducible holomorphic symplectic manifold} or \emph{hyperk\"ahler manifold} is a simply--connected compact K\"ahler manifold $X$ such that $H^0\bigl(X,\, \Omega_X^2 \bigr)$ is spanned by an everywhere non--degenerate holomorphic two--form $\sigma$.
\end{defin}

Actually, the notion of hyperk\"ahler manifold is of differential--geometric origin and stands for a Ricci--flat K\"ahler manifold with holonomy group $\Sp(n)$. It was shown by Beauville in \cite[Prop 4]{beauville83} that this condition is equivalent to the existence of a holomorphic symplectic form unique up to scalars; often the terms \emph{irreducible holomorphic symplectic manifold} and \emph{hyperk\"ahler manifold} are therefore used synonymously.

\subsection{The Beauville--Bogomolov form}\label{sect:q}
The second cohomology $H^2\bigl(X,\, \mathbb{Z}\bigr)$ of a hyperk\"ahler manifold $X$ carries a natural, integral symmetric bilinear form
\[q = q_X \colon H^2\bigl(X,\, \mathbb{Z}\bigr) \times H^2\bigl(X,\, \mathbb{Z}\bigr) \to \mathbb{Z},\]
the so--called \emph{Beauville--Bogomolov--Fujiki form} (see \cite[Thm.~5]{beauville83} or \cite[Def.~22.8]{HuybrechtsHKM}). Since we need to consider the restriction of this form to subspaces where it might be degenerate, we give its signature as a triple containing (in this order) the number of positive, zero, and negative eigenvalues of the associated real symmetric bilinear form. In this notation $q$ has  signature $(3,0, b_2(X) -3)$, and its restriction to  $H^{1,1}(X, \mathbb R)$  has signature $(1, 0, h^{1,1}-1)$, see~\cite[Cor.~23.11]{HuybrechtsHKM}.

Let $\rho = \rho(X)$ be the Picard number of $X$, that is, the rank of the N\'eron--Severi group $\mathrm{NS}(X)=H^{1,1}(X)\cap H^2(X, \IQ)$.  We distinguish hyperk\"ahler manifolds according to the signature of the restriction of $q$ to $\mathrm{NS}(X)$. We call $X$ \emph{hyperbolic} if $q\restr{\mathrm{NS}(X)}$ has signature $(1, 0 , \rho -1)$, \emph{parabolic} if $q\restr{\mathrm{NS}(X)}$ has signature $(0, 1 , \rho -1)$, and \emph{elliptic} if $q\restr{\mathrm{NS}(X)}$ has signature $(0, 0 , \rho)$.
The relevance of these notions is underlined by the following
 result of Huybrechts.
\begin{theo}[Prop.~26.13 of \cite{HuybrechtsHKM}]
A hyperk\"ahler manifold $X$ is projective if and only if $X$ is hyperbolic.
\end{theo}

\subsection{Lagrangian fibrations}\label{sect: lagrfibr}
\begin{defin}
An $n$--dimensional analytic subvariety $Z$ of a $2n$--di\-men\-sio\-nal hyperk\"ahler manifold $X$ is called \emph{Lagrangian} if the holomorphic symplectic form $\sigma$ restricts to a trivial two--form on the smooth part of $Z$.
\end{defin}
As a consequence of Kodaira's embedding theorem, Lagrangian submanifolds turn out to be projective, independent of the projectivity of the ambient hyperk\"ahler manifold.
\begin{prop}[Prop.~2.1 of \cite{campana06}]\label{prop:Lagrangianprojective}
Let $X$ be a hyperk\"ahler manifold and $L \subset X$ a Lagrangian submanifold. Then $L$ is projective.
\end{prop}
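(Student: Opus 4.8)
The plan is to produce a rational K\"ahler class on $L$ and then invoke Kodaira's embedding theorem. Since $X$ is K\"ahler and $L \subset X$ is a compact complex submanifold, $L$ is itself a compact K\"ahler manifold: fixing a K\"ahler form $\omega$ on $X$, the restriction $\omega|_L$ is a K\"ahler form on $L$, so $r([\omega])$ is a K\"ahler class, where $r \colon H^2(X, \mathbb{R}) \to H^2(L, \mathbb{R})$ denotes pullback along the inclusion. The only thing standing between $L$ and projectivity is that $r([\omega])$ need not be a rational class, and the whole content of the proof is that the Lagrangian hypothesis lets us cure this by a small perturbation of $[\omega]$ upstairs on $X$.

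The key step is the observation that, after complexification, the restriction map $r \colon H^2(X, \mathbb{C}) \to H^2(L, \mathbb{C})$ has image inside $H^{1,1}(L)$. To see this, decompose $H^2(X, \mathbb{C}) = H^{2,0}(X) \oplus H^{1,1}(X) \oplus H^{0,2}(X)$. Because $X$ is hyperk\"ahler we have $H^{2,0}(X) = \mathbb{C}\sigma$ and $H^{0,2}(X) = \mathbb{C}\overline{\sigma}$, and the Lagrangian hypothesis is precisely the statement that $\sigma|_L = 0$ as a two--form on the (smooth) manifold $L$; hence $r$ kills both of these summands. On the other hand, a closed $(1,1)$--form on $X$ pulls back to a closed $(1,1)$--form on the complex submanifold $L$, so $r\bigl(H^{1,1}(X)\bigr) \subseteq H^{1,1}(L)$. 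Therefore $\mathrm{im}(r) = r\bigl(H^{1,1}(X)\bigr) \subseteq H^{1,1}(L)$. Since $r$ is defined over $\mathbb{Q}$, it follows that $r$ maps $H^2(X, \mathbb{Q})$ into $H^{1,1}(L) \cap H^2(L, \mathbb{Q}) = \mathrm{NS}(L)_{\mathbb{Q}}$.

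It remains to approximate. As $H^2(X, \mathbb{Q})$ is dense in $H^2(X, \mathbb{R})$, choose $\alpha \in H^2(X, \mathbb{Q})$ close to $[\omega]$. By the previous paragraph $r(\alpha)$ is a rational class of type $(1,1)$ on $L$, and, $r$ being continuous, it is close to $r([\omega])$, which is a K\"ahler class. Since the K\"ahler cone of $L$ is open in $H^{1,1}(L, \mathbb{R})$, for $\alpha$ sufficiently close to $[\omega]$ the class $r(\alpha)$ is again K\"ahler. Hence $L$ carries a rational K\"ahler class and is projective by Kodaira's embedding theorem.

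I expect the only delicate point to be the claim that $H^2(X, \mathbb{C})$ restricts into $H^{1,1}(L)$: this is where one genuinely uses that $L$ is Lagrangian, so as to annihilate $H^{2,0}(X)$, together with the hyperk\"ahler hypothesis, which guarantees that $H^{2,0}(X)$ is one--dimensional and spanned by $\sigma$ --- for an arbitrary compact K\"ahler $X$ carrying a holomorphic two--form that merely vanishes along $L$ the statement would fail. Beyond that, the argument is purely a matter of density of rational classes and openness of the K\"ahler cone, finished off by Kodaira; so I would view the restriction claim as the conceptual heart rather than a technical obstacle.
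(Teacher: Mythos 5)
Your argument is correct and is essentially the standard proof of this statement: the paper itself only cites Campana's Prop.~2.1, and Campana's argument is exactly yours --- the Lagrangian condition kills $H^{2,0}(X)\oplus H^{0,2}(X)=\mathbb{C}\sigma\oplus\mathbb{C}\bar\sigma$ under restriction, so rational classes on $X$ restrict to rational $(1,1)$--classes on $L$, and density plus openness of the K\"ahler cone produces a rational K\"ahler class, whence projectivity by Kodaira. No changes needed.
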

\begin{defin}
Let $X$ be a hyperk\"ahler manifold. A \emph{Lagrangian fibration} on $X$ is a holomorphic map $f\colon X \to B$ with connected fibres onto a normal complex space $B$ such that every irreducible component of the reduction of every fibre of $f$ is a Lagrangian subvariety of $X$.
\end{defin}
Due to work of Matsushita one knows that Lagrangian fibrations are the only non--trivial maps from hyperk\"ahler manifolds to lower--dimensional spaces:
\begin{theo}[\cite{matsushita99, matsushita00,matsushita01, matsushita03}]\label{thm: matsushita}
Let $X$ be a hyperk\"ahler manifold of dimension $2n$. If $f\colon  X \to B$ is a holomorphic map with connected fibres onto a normal complex space $B$ with $0 < \dim B < \dim X$, then $f$ is a Lagrangian fibration. In particular, $f$ is equidimensional and $\dim B = n$. Furthermore, every smooth fibre of $f$ is a complex torus.
\end{theo}
In fact, using Proposition~\ref{prop:Lagrangianprojective} we see that every smooth fibre of $f$ is projective, that is, an abelian variety.

\subsection{Almost holomorphic Lagrangian fibrations}
Let $X$ be a hyperk\"ahler manifold and let $f\colon X \dashrightarrow B$ be a dominant meromorphic map. Recall that $f$ is called \emph{almost holomorphic} if there is a (Zariski) open subset $U\subset B$ such that the restriction $f\restr{\inverse f (U)}\colon\inverse f (U) \to U$ is holomorphic and proper. A \emph{strong fibre} of an almost holomorphic map $f$ is a fibre of $f\restr{\inverse f (U)}$.

\begin{defin}
A dominant meromorphic map $f\colon  X \dashrightarrow B$ is called \emph{almost holomorphic Lagrangian fibration} if  $f$ is almost holomorphic with connected fibres and the reduction of every irreducible  component of a fibre of $f\restr{\inverse f (U)}$ is a Lagrangian subvariety of $X$.
\end{defin}
If $f$ is not almost holomorphic, the na\"{i}ve notion of Lagrangian fibre is not too well behaved. In particular, there might not be a single fibre which is isomorphic to a complex torus. For an explicit example consider a pencil of higher--genus curves on a K3--surface.

\begin{rem}\label{rem: pullback}
Recall that if $A$ is a divisor on $B$, then its pullback via $f$ is defined either geometrically as the closure of the pullback on the locus where $f$ is holomorphic, or on the level of locally free sheaves as $f^*\ko_B(A):=(p_*\widetilde f^* \ko_B(A))^{\vee\vee}$,  where $p\colon\widetilde X\to X$ is a resolution of indeterminacies leading to a diagram
\begin{equation*}
\begin{gathered}
\xymatrix{ &\widetilde X\ar[dl]_p \ar[dr]^{\widetilde f}\\ X\ar@{-->}[rr]^f && B,}
\end{gathered}
\end{equation*}
see also \cite[Setup 3.2]{cop}.
\end{rem}

\section{Preliminaries on Barlet spaces}
The Barlet space of a complex space $X$ is the complex--analytic analogue of the Chow variety of a projective algebraic variety. It parametrises compact cycles (with multiplicity) in  $X$. We shortly recall the relevant definitions as well as important criteria to recognise analytic families. The authoritative reference on the subject is \cite{barlet75}, a survey of the circle of ideas surrounding this fundamental construction can be found in \cite{SCVVII}.

In this section, all complex spaces are assumed to be reduced.

\subsection{Basic definitions and properties }

\begin{defin}
Let $X$ be a complex space and $n \in \mathbb{N}$ an integer. An \emph{$n$--cycle} in
$X$ is a finite linear combination $Z = \sum_{i \in I} k_i Z_i$ where $k_i \in \IN^{>0}$ and the $Z_i$'s are pairwise distinct, irreducible, $n$--dimensional compact analytic subsets of $X$. The \emph{support} of $Z$, denoted $\mathrm{supp}(Z)$, is the union of the $Z_i$'s. The set of all $n$--cycles in $X$ is denoted by $\gothB_n(X)$;  the set of all cycles $\gothB(X)=\bigcup_{n\in \IN}\gothB_n(X)$ is called the \emph{Barlet space} of $X$.
\end{defin}
In \cite{barlet75} a natural complex structure is constructed on $\gothB(X)$. With the appropriate definition of \emph{analytic family of compact $n$--cycles}, see \cite[d\'efinition fondamentale, p.~33]{barlet75}, Barlet  showed that $\gothB_n(X)$ represents the functor
\begin{align*}
\gothF ^n_X :&\text{ (reduced complex spaces)} \to \text{(sets)}  \\
&S \mapsto \{\text{analytic families of compact $n$--cycles parametrised by }S\}
\end{align*}
In particular, there exists a universal family $(Z_s)_{s\in \gothB_n(X)}$ of compact $n$--cycles in $X$ para\-metrised by $\gothB_n(X)$. Furthermore, if $(Z_s)_{s\in S}$ is an analytic family of compact $n$--cycles in $X$, then there exists a holomorphic map $\mu\colon S \to \gothB_n(X)$ inducing $(Z_s)_{s\in S}$, the so--called \emph{classifying map}.

We are not going to use the definition of analytic family explicitly, but rather apply the following useful criterion, see also \cite[Ch.~I, \S 2, Thm.~1]{barlet75}:
\begin{prop}[Thm.~2 of \cite{barlet99}]\label{prop:analyticfamiliycriterion}
Let $X$ be a complex manifold and $S$ a normal complex space. Let $\Gamma \subset S \times X$ be an analytic subset which is proper and equidimensional over $S$ with purely $n$--dimensional fibres. Then, there exists a unique analytic family of compact $n$--cycles $(Z_s)_{s\in S}$ parametrised by $S$ satisfying the following conditions:
\begin{enumerate}
\item For general $s \in S$, we have $Z_s = \mathrm{supp}(Z_s)$; in other words, all multiplicities are equal to $1$.
\item For all $s \in S$, we have $\{s\} \times \mathrm{supp}(Z_s) = \Gamma \cap (\{s\} \times X)$ as sets.
\end{enumerate}
\end{prop}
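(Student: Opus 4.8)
The plan is to construct the family first over a dense open subset of $S$ by means of the flatness criterion \cite[Ch.~I, \S 2, Thm.~1]{barlet75}, and then to propagate it across the remaining nowhere--dense locus using the normality of $S$ together with the properness of $\Gamma$ over $S$. Since the assertion is local on $S$, we may assume $S$ connected and normal, hence irreducible. By generic flatness for the projection $\pi \colon \Gamma \to S$ (Frisch's theorem), followed by a further shrinking so that the fibres become reduced, there is a dense Zariski--open $S^\circ \subseteq S$ over which $\pi$ is flat of relative dimension $n$ with reduced fibres. Over $S^\circ$ the flatness criterion equips the reduced subspace $\Gamma \cap (S^\circ \times X)$ with the structure of an analytic family of compact $n$--cycles $(\Gamma_s)_{s \in S^\circ}$ --- all of multiplicity one by the choice of $S^\circ$, and with compact supports because $\pi$ is proper --- and so furnishes a classifying map $\mu^\circ \colon S^\circ \to \gothB_n(X)$.

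The heart of the argument is to show that $\mu^\circ$ extends to a holomorphic map $\mu \colon S \to \gothB_n(X)$; equivalently, that the cycles $\Gamma_s$ fit together analytically across $S \setminus S^\circ$. The relevant boundedness is supplied by properness of $\pi$: for every relatively compact open $V \subseteq S$ the supports of the $\Gamma_s$, $s \in V \cap S^\circ$, all lie in the compact set $\pi^{-1}(\overline{V})$, so $\mu^\circ(V \cap S^\circ)$ is relatively compact in $\gothB_n(X)$. Since $S$ is normal, Barlet's extension theorem for such locally bounded holomorphic maps into the cycle space then yields a unique holomorphic extension $\mu \colon S \to \gothB_n(X)$, and unwinding the representability of the cycle functor shows that the associated analytic family $(Z_s)_{s \in S}$ is carried by the analytic set $|G| := \overline{\Gamma \cap (S^\circ \times X)} \subseteq S \times X$ --- a union of irreducible components of $\Gamma$ --- with $Z_s = \Gamma_s$ reduced for generic $s$. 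This extension, i.e. the analyticity of the cycle family over all of $S$, is precisely the content of \cite[Thm.~2]{barlet99}, and it is the step for which the full force of Barlet's structure theory for cycle spaces is needed; I expect the genuine difficulty to reside here, everything else being essentially formal.

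Granting this, conditions (1) and (2) and uniqueness follow readily. Condition (1) holds because the generic member $\Gamma_s$, $s \in S^\circ$, is reduced. For (2), the support graph of $(Z_s)_{s \in S}$ is the analytic set $|G| = \overline{\Gamma \cap (S^\circ \times X)}$, a union of irreducible components of $\Gamma$; the hypothesis that $\Gamma$ is equidimensional over $S$ forbids components of $\Gamma$ lying over proper analytic subsets of $S$, so every component of $\Gamma$ meets $S^\circ \times X$ and therefore $|G| = \Gamma$. In particular $\{s\} \times \supp(Z_s) = \Gamma \cap (\{s\} \times X)$ for all $s \in S$. Finally, if $(Z_s)_{s \in S}$ and $(Z'_s)_{s \in S}$ both satisfy (1) and (2), then (1) and (2) pin them down over $S^\circ$ as the reduced family $(\Gamma_s)_{s \in S^\circ}$, so their classifying maps $S \to \gothB_n(X)$ agree on the dense subset $S^\circ$ of the reduced space $S$ and hence coincide; therefore $(Z_s)_{s \in S} = (Z'_s)_{s \in S}$.
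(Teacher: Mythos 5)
The statement you were asked to prove is an imported result: the paper offers no argument for it and simply quotes \cite[Thm.~2]{barlet99}. Your proposal, however, does not constitute a proof, because at the decisive step --- extending the generically defined family of cycles across $S\setminus S^\circ$ --- you appeal to ``the content of \cite[Thm.~2]{barlet99}'', which is exactly the statement under discussion. The surrounding material (generic flatness and reducedness of the generic fibre, the identification of the support graph with $\Gamma$ via equidimensionality, uniqueness by density of $S^\circ$ in the reduced space $S$) is correct and, as you say, essentially formal; but the entire content of the theorem is the analytic continuation of the cycle family across the degenerate locus, and your sketch reduces that step to the theorem itself. So as a blind proof it is circular, even though in the context of the paper the intended ``proof'' is precisely the citation.

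There is also a technical gap in the reduction you set up. Containment of the supports $\mathrm{supp}(Z_s)$, $s\in V\cap S^\circ$, in the compact set $\pi^{-1}(\overline V)$ does not by itself make $\mu^\circ(V\cap S^\circ)$ relatively compact in $\gothB_n(X)$: by Barlet's compactness criterion \cite[Thm.~1]{barlet99} (the one used in the proof of Proposition~\ref{prop: folklore}) one needs, in addition, a uniform bound on the volumes of the cycles with respect to a hermitian metric, and extracting such a bound from the analytic set $\Gamma$ is part of the work. Moreover, even granting relative compactness of the image, there is no off-the-shelf ``extension theorem'' for locally bounded holomorphic maps into the cycle space across a nowhere dense locus of a normal base; showing that the closure of the graph of $\mu^\circ$ is analytic and that the resulting classifying map is continuous, hence holomorphic by normality of $S$, is precisely what Barlet's proof accomplishes. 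If your intent is to quote \cite[Thm.~2]{barlet99} as a black box --- as the paper does --- no argument is needed; as a self-contained proof, the key step is missing.
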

Sometimes it is useful to discuss families of cycles where the dimension is allowed to vary. This leads to the concept of a \emph{meromorphic family of cycles}, which is an analytic family of cycles on a dense open set such that its classifying map is meromorphic. The following result relates this notion to geometry:
\begin{prop}[Prop.~2.20 in Ch.~VIII of \cite{SCVVII}]\label{prop:meromofamily}
Let $X$ and $S$ be irreducible complex spaces, $\dim S = d$. There exists a natural identification between
\begin{enumerate}
\item meromorphic maps $\mu\colon S \dashrightarrow \gothB_n(X)$, and
\item $S$--proper purely $(n+d)$--dimensional cycles $\Gamma$ in $X \times S$,
\end{enumerate}
given by considering the graph of the meromorphic map $\mu$.
Accordingly, we call $\Gamma$ the \emph{graph of the meromorphic family} determined by $\mu$.
\end{prop}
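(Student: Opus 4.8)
The plan is to exhibit two constructions, $\mu \mapsto \Gamma_\mu$ and $\Gamma \mapsto \mu_\Gamma$, and to check that they are mutually inverse; the naturality of the identification (its compatibility with base change in $S$) will be visible from the construction itself. Throughout I write $\mathcal Z \subset \gothB_n(X) \times X$ for the support of the universal family $(Z_w)_{w \in \gothB_n(X)}$, equipped with the multiplicities of the universal cycle, and recall that $\mathcal Z \to \gothB_n(X)$ is proper with purely $n$--dimensional fibres.

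For the direction $\mu \mapsto \Gamma_\mu$ I would take the graph $G \subset S \times \gothB_n(X)$ of $\mu$ --- an irreducible analytic subset, proper over $S$, mapping biholomorphically onto a dense open $S_0 \subset S$ --- and form the fibre product $G \times_{\gothB_n(X)} \mathcal Z$, which is proper over $S$ as a composite of two proper maps. By Remmert's proper mapping theorem its image $\Gamma$ under the projection to $S \times X$ is analytic and proper over $S$. Over $S_0$ this image is the incidence set $\bigcup_{s \in S_0}\{s\} \times \supp(Z_{\mu(s)})$, which is purely $(n+d)$--dimensional; a dimension count (the fibres of $G \to S$ over the indeterminacy locus have dimension $\le d-1$, so contribute only lower-dimensional pieces) shows that $\Gamma$ has no components of larger dimension, so its $(n+d)$--dimensional part, carrying the multiplicities transported from the universal cycle, is the desired $S$--proper, purely $(n+d)$--dimensional cycle $\Gamma_\mu$.

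The substantial direction is $\Gamma \mapsto \mu_\Gamma$. Given an $S$--proper, purely $(n+d)$--dimensional cycle $\Gamma$ in $X \times S$, upper semicontinuity of fibre dimension for the proper map $\supp(\Gamma) \to S$ provides a dense Zariski--open $S_0$ contained in the smooth (hence normal) locus of $S$ over which $\supp(\Gamma) \to S$ is equidimensional with purely $n$--dimensional fibres; shrinking further makes $\Gamma$ generically reduced on fibres. Proposition~\ref{prop:analyticfamiliycriterion}, applied after a local embedding of $X$ into a manifold if $X$ is singular, then yields an analytic family of compact $n$--cycles over $S_0$, and reinstating the multiplicities of $\Gamma$ gives an analytic family of $n$--cycles, hence a classifying map $\mu_0 \colon S_0 \to \gothB_n(X)$. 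The crux is to extend $\mu_0$ meromorphically over $S$, i.e.\ to show that the closure $G$ of the graph of $\mu_0$ in $S \times \gothB_n(X)$ is analytic. To this end I would fix a relatively compact open subset $U$ of $S$: $S$--properness of $\Gamma$ forces all cycles $Z_s$ with $s \in S_0 \cap U$ to lie in a fixed compact subset of $X$ with locally bounded volume, so Bishop's compactness theorem --- the tool underlying Barlet's construction of the complex structure on $\gothB_n(X)$ in \cite{barlet75} --- makes $\{Z_s : s \in S_0 \cap U\}$ relatively compact in $\gothB_n(X)$. Hence $G \cap (U \times \gothB_n(X))$ is compact, and a Remmert--Stein type extension theorem promotes $G$ to an analytic set; being irreducible, proper over $S$, and biholomorphic to $S_0$ over $S_0$, it is the graph of a meromorphic map $\mu_\Gamma \colon S \dashrightarrow \gothB_n(X)$.

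It remains to verify that the two constructions are mutually inverse, which is routine: on the open set where $\mu$ is holomorphic (respectively where $\Gamma$ already restricts to an analytic family of $n$--cycles) both operations reduce to the tautological passage between classifying maps and universal families, so $\mu_{\Gamma_\mu} = \mu$ and $\Gamma_{\mu_\Gamma} = \Gamma$ there, and irreducibility of $S$ together with the uniqueness assertions in Proposition~\ref{prop:analyticfamiliycriterion} propagates these equalities to all of $S$. The single genuine obstacle is the analytic extension of the graph $G$ in the second construction: that is exactly the point where one leaves the algebraic category and must invoke Bishop's compactness theorem together with the Remmert--Stein phenomenon, whereas the rest of the argument is a formal manipulation of proper maps and universal families.
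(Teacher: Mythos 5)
The paper itself offers no proof of this proposition --- it is quoted as Prop.~2.20 in Ch.~VIII of \cite{SCVVII} --- so your attempt can only be measured against the standard argument from Barlet's theory, and there it breaks down at exactly the step you yourself identify as the crux. Your first direction ($\mu\mapsto\Gamma_\mu$) and the formal bookkeeping are fine, up to one caveat you gloss over: taken literally, an $S$--proper purely $(n+d)$--dimensional cycle may have components lying over nowhere dense analytic subsets of $S$ (e.g.\ $W\times\{s_0\}$ with $W\subset X$ compact of dimension $n+d$); such cycles are never graphs of meromorphic families, your recipe $\Gamma\mapsto\mu_\Gamma$ silently discards these components, and so ``mutually inverse'' only holds after restricting, as intended, to cycles all of whose components dominate $S$.

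The genuine gap is in the meromorphic extension of $\mu_0$. First, the assertion that $S$--properness ``forces'' the fibres $Z_s$, $s\in S_0\cap U$, to have locally bounded volume is not a formal consequence of their lying in a fixed compact subset of $X$: cycles inside a fixed compact set can have arbitrarily large volume. That the fibres of a proper map from a purely $(n+d)$--dimensional analytic set have locally bounded volume is a nontrivial theorem of Barlet (his bound on the volume of generic fibres, proved via adapted scales), i.e.\ one of the two real inputs of the proof, and cannot simply be asserted. Second, even granting Bishop compactness, compactness of $\overline{G_0}$ over relatively compact subsets of $S$ does not yield analyticity of the graph closure. Remmert--Stein is inapplicable: you extend the $d$--dimensional set $G_0$ across $(S\setminus S_0)\times\gothB_n(X)$, and its relevant part has dimension at least $(d-1)+m$, where $m$ is the local dimension of the cycle space along the limit cycles, so the hypothesis $\dim E<d$ fails as soon as $S\setminus S_0$ has codimension one and the limit cycles move in a positive--dimensional family --- the typical situation. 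The Remmert--Stein--type theorem with a chance of applying is Bishop's extension theorem, but it requires locally finite $2d$--volume of the graph $G_0$ itself inside $S\times\gothB_n(X)$ (with respect to a metric on a local embedding of the cycle space), a bound you never establish and which follows neither from the fibre--volume bound nor from compactness of the closure. Supplying it is essentially the content of Barlet's geometric flattening theorem (equivalently, the analysis in local multisymmetric charts), so the hard half of the proposition remains unproved in your argument.
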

A drawback of the Barlet space is that it does not lend itself to infinitesimal computations; to understand its local structure we need to relate it to the  the Douady space $\mathscr{D}(X)$, which is the complex--analytic analogue of the Hilbert scheme (see for example \cite{douady66} or \cite[Ch.~VIII]{SCVVII}). The local structure of the Douady space can be studied via deformation theory of submanifolds in $X$, and the following comparison result then allows to obtain local information about the corresponding Barlet space.
\begin{prop}[petit th\'eor\`eme in Ch.~V, \S 3 of \cite{barlet75})]\label{prop:DouadyBarlet}
Let $X$ be a complex manifold, and let $Z$ be a compact submanifold of $X$. Let $\mathscr{D}_n(X)$ be the (open and closed) subspace of the Douady space $\mathscr{D}(X)$ that parametrises ideals with purely $n$--di\-men\-sio\-nal support in $X$. Then, the natural holomorphic map $\mathscr{D}_n(X)_{\rm red} \to \gothB_n(X)$ is locally biholomorphic at $[Z] \in \mathscr{D}_n(X)_{\rm red}$.
\end{prop}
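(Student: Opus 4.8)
The plan is to exhibit, in a neighbourhood of $[Z]$, a holomorphic two--sided inverse of the natural cycle map $c\colon\mathscr{D}_n(X)_{\mathrm{red}}\to\gothB_n(X)$, which sends a subspace to its fundamental cycle. As a first reduction I would pass to an open neighbourhood of $[Z]$ in $\mathscr{D}_n(X)$ over which the universal family contains only smooth submanifolds. The universal family $\pi\colon\mathscr{Z}\to\mathscr{D}_n(X)$ is proper and flat and its fibre over $[Z]$ is the smooth variety $Z$; since a flat morphism is smooth along a smooth fibre, the open locus $U\subset\mathscr{Z}$ where $\pi$ is smooth contains this fibre, and, $\pi$ being proper, $V:=\mathscr{D}_n(X)\setminus\pi(\mathscr{Z}\setminus U)$ is an open neighbourhood of $[Z]$ over which every member of the family is a smooth $n$--dimensional submanifold of $X$. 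On $V_{\mathrm{red}}$ the map $c$ sends $[Z']$ to the reduced cycle $1\cdot Z'$ and is therefore injective, since a reduced subspace is recovered from the support of its fundamental cycle; its image is the set of cycles supported on a smooth submanifold near $Z$.

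Next I would show that, conversely, every cycle of $\gothB_n(X)$ close enough to $[Z]$ is of this form. Here one uses Barlet's local description of the Barlet space by \emph{scales} (\emph{écailles}): one covers $Z$ by finitely many charts $U_\alpha$ in which $X$ is a product $B_\alpha\times B_\alpha'$ with $Z\cap U_\alpha$ the graph of a holomorphic map, and a cycle near $[Z]$ is encoded, chart by chart, through the elementary symmetric functions of the branches of a finite branched cover of $B_\alpha$. Conservation of mass --- the continuity of the volume of cycles --- forces all multiplicities of a cycle near $1\cdot Z$ to equal $1$, so in each chart the branched cover has a single sheet and the data collapse to a single holomorphic section; glueing the graphs of these sections shows that such a cycle is precisely the fundamental cycle of a smooth $n$--dimensional submanifold of $X$ near $Z$. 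In fact the same scale coordinates exhibit the germ of $\gothB_n(X)$ at $[Z]$ and the germ of $\mathscr{D}_n(X)$ at $[Z]$ as germs of analytic subsets of $H^0(Z, N_{Z/X})$ cut out by the same obstruction, the former being the reduction of the latter, and this already gives the proposition.

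If one prefers to conclude more formally, one finishes as follows. Choose a neighbourhood $W$ of $[Z]$ in $\gothB_n(X)$ over which, by the previous step, the support $\Gamma_W\subset W\times X$ of the universal family of cycles is a smooth submanifold and $\Gamma_W\to W$ is a flat proper family of $n$--dimensional submanifolds of $X$. The universal property of the Douady space provides a holomorphic classifying map $W\to\mathscr{D}_n(X)$; since $W$ is reduced it factors through $\mathscr{D}_n(X)_{\mathrm{red}}$, and after shrinking $W$ it lands in $V_{\mathrm{red}}$ by continuity, because it sends $[Z]$ to $[Z]$. On points this map inverts $c$, both maps amounting to passing between a smooth subvariety and its reduced fundamental cycle; as all the spaces involved are reduced, the two compositions are the identity morphisms on suitable neighbourhoods of $[Z]$, and hence $c$ is biholomorphic near $[Z]$.

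The step I expect to be the main obstacle is the second one: showing that ``multiplicity one with smooth support'' is an open condition in $\gothB_n(X)$ at $[Z]$, together with the holomorphic dependence of the support on the cycle. The control on multiplicities is soft and comes from conservation of mass, but the smoothness of the nearby supports and their holomorphic variation --- whether packaged as scale charts for $\gothB_n(X)$ or as the flat family needed to apply the Douady universal property --- rest on Barlet's analysis of the local structure of the Barlet space via scales, and this is exactly where the hypothesis that $Z$ is smooth is used.
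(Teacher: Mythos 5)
This proposition is not proved in the paper at all: it is quoted as Barlet's \emph{petit th\'eor\`eme}, so there is no in-text argument to compare with, and your proposal should be judged as a reconstruction of the standard proof — which it essentially is, and it is correct in outline. Your three steps (flat $+$ proper $+$ smooth central fibre gives a Douady neighbourhood of $[Z]$ consisting of smooth submanifolds; the scale description shows every cycle near $1\cdot Z$ is a multiplicity-one cycle on a smooth submanifold varying holomorphically; the Douady universal property applied to the reduced graph $\Gamma_W \to W$ produces a holomorphic two-sided inverse, identities of morphisms being checkable on points since all spaces are reduced) are exactly the standard route. Two refinements are worth making. First, the mechanism that forces multiplicity one is not really ``conservation of mass'': continuity of volume by itself does not exclude, say, a multiplicity-two component of roughly half the volume; what does the work is that for each scale adapted to $Z$ the degree is locally constant on $\gothB_n(X)$, so nearby cycles have degree $1$ in every chart of a finite adapted covering and support inside the union of these charts, hence are graphs with multiplicity one — you state the single-sheet conclusion, but the justification should be the local constancy of scale degrees rather than volume continuity. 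Second, the parenthetical claim that the scales exhibit the germs of $\gothB_n(X)$ and $\mathscr{D}_n(X)$ at $[Z]$ inside $H^0\bigl(Z, N_{Z/X}\bigr)$ ``cut out by the same obstruction, the former being the reduction of the latter'' is an assertion of the proposition, not an argument, and should be dropped or flagged as heuristic; the burden is carried by your final paragraph, where you should also record explicitly that the degree-one scale description makes $\Gamma_W$ locally a graph over $W\times B_\alpha$, hence $\Gamma_W \to W$ locally a product and therefore flat and proper, which is what legitimises invoking the Douady universal property over the (possibly singular, but reduced) base $W$.
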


The following fundamental result was proven by Lieberman and Fujiki. It often allows to argue along similar lines as in an algebraic setup.
\begin{theo}[\cite{lieberman}, \cite{fujiki78b}]\label{thm:Barletcompact}
Let $X$ be a compact K\"ahler space. Then, every connected component of $\gothB(X)$ is compact.
\end{theo}
It is folklore that this result can be extended to  the relative case.  Since a reference was hard to track down, for the convenience of the reader we give a short argument inspired by \cite[Prop.~1.1]{TelemanDloussky}.
\begin{prop}\label{prop: folklore}
 Let $h\colon \gothX\to T$ be a smooth family of compact complex manifolds over a smooth space $T$. If some fibre $\gothX_0$ is K\"ahler, then there is an open neighbourhood $U$ of $0$ in $T$ such that each connected component of the relative Barlet space $\gothB(\inverse h(U)/U)$ is proper over $U$.
\end{prop}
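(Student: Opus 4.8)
The plan is to carry out, relative to $U$, the Lieberman--Fujiki argument that underlies Theorem~\ref{thm:Barletcompact}: inside a fixed connected component of the relative Barlet space the cycles have uniformly bounded volume over compact subsets of the base, and bounded volume forces relative compactness by Bishop's theorem on convergence of analytic cycles. The K\"ahler hypothesis on $\gothX_0$ is used exactly once, at the outset. Since $\gothX_0$ is K\"ahler, the theorem of Kodaira and Spencer on the stability of K\"ahler structures under small deformations allows us to shrink $T$ to a neighbourhood $U$ of $0$ --- which we may take to be a ball, hence simply connected and second countable --- over which every fibre $\gothX_t$ is K\"ahler, with K\"ahler forms depending smoothly on $t$; assembling these gives a smooth real $(1,1)$--form $\omega$ on $\inverse h(U)$ with $\omega_t := \omega|_{\gothX_t}$ a K\"ahler form for every $t\in U$. (We do not claim $\omega$ is $d$--closed on $\inverse h(U)$; only its fibrewise restrictions are.) Let $\pi\colon\gothB(\inverse h(U)/U)\to U$ be the natural holomorphic map sending a cycle to the base point of the fibre containing it, and fix a connected component $\mathscr{C}$; as each $\gothB_n(\inverse h(U)/U)$ is open and closed, $\mathscr{C}$ parametrises $n$--cycles for one fixed $n$, and we must prove that $\pi|_{\mathscr{C}}$ is proper.

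First I would bound volumes. For $s\in\mathscr{C}$ write $Z_s\subset\gothX_{\pi(s)}$ for the corresponding cycle. Since $\omega_t$ is closed,
\[ \mathrm{vol}(Z_s) \;=\; \frac{1}{n!}\int_{Z_s}\omega_{\pi(s)}^{\,n} \;=\; \frac{1}{n!}\bigl\langle\,[\omega_{\pi(s)}]^n,\ [Z_s]\,\bigr\rangle \]
depends only on the de Rham class $[\omega_{\pi(s)}]$ and the homology class $[Z_s]$ of the fibre over $\pi(s)$. By Ehresmann's theorem $h$ is a locally trivial fibre bundle over $U$; as $U$ is simply connected, parallel transport yields canonical isomorphisms $H_{2n}(\gothX_t,\mathbb{Z})\cong H_{2n}(\gothX_0,\mathbb{Z})$ and $H^2(\gothX_t,\mathbb{R})\cong H^2(\gothX_0,\mathbb{R})$. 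Under these, $t\mapsto[\omega_t]$ is continuous, while $s\mapsto[Z_s]$ is locally constant on $\mathscr{C}$, hence constant (say $\equiv\gamma_0$) because $\mathscr{C}$ is connected. Therefore $\mathrm{vol}(Z_s) = g(\pi(s))$ for a continuous function $g$ on $U$, and in particular $\mathrm{vol}(Z_s)\le\sup_K g < \infty$ for every $s\in\pi^{-1}(K)$ and every compact $K\subset U$.

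Next I would deduce properness via sequential compactness of $\pi^{-1}(K)$. Fix a Hermitian metric on the complex manifold $\inverse h(U)$, with associated $(1,1)$--form $\gamma$, and let $(s_i)$ be a sequence in $\pi^{-1}(K)$; after passing to a subsequence, $t_i := \pi(s_i)\to t_\infty\in K$, so $K' := \{t_i\}_i\cup\{t_\infty\}$ is compact, $\inverse h(K')$ is a compact subset of $\inverse h(U)$ since $h$ is proper, and all supports $\supp(Z_{s_i})$ lie in $\inverse h(K')$. On $\inverse h(K')$ the positive forms $\gamma|_{\gothX_t}$ and $\omega_t$ vary continuously in $t\in K'$, so $\gamma|_{\gothX_t}\le C\,\omega_t$ for some $C>0$; hence $\mathrm{vol}_\gamma(Z_{s_i}) \le C^{\,n}\,\mathrm{vol}(Z_{s_i})$, which is uniformly bounded by the previous paragraph. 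Bishop's theorem on analytic cycles --- which, together with Barlet's description of the topology of the Barlet space, underlies Theorem~\ref{thm:Barletcompact}; see \cite{barlet75,lieberman,fujiki78b} --- now produces, after a further subsequence, convergence $Z_{s_i}\to Z_\infty$ in the Barlet space of $\inverse h(U)$ to an analytic $n$--cycle $Z_\infty$ with $\supp(Z_\infty)\subset\inverse h(K')$; since each $\supp(Z_{s_i})$ lies in the fibre over $t_i$ and $t_i\to t_\infty$, we get $\supp(Z_\infty)\subset\gothX_{t_\infty}$, so $Z_\infty$ defines a point $s_\infty$ over $t_\infty$, and, all cycles involved being supported in fibres of $h$, the convergence $s_i\to s_\infty$ takes place in $\gothB(\inverse h(U)/U)$. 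Finally, $\mathscr{C}$ is closed, being a connected component, so $s_\infty\in\mathscr{C}\cap\pi^{-1}(K)$; thus $\pi^{-1}(K)$ is sequentially compact, and since the Barlet space --- hence $\mathscr{C}$ --- is a metrizable complex space, $\pi^{-1}(K)$ is compact. Therefore $\pi|_{\mathscr{C}}$ is proper.

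The two steps I expect to require genuine care, rather than formal manipulation, are producing the relative K\"ahler form $\omega$ (the sole use of the K\"ahler assumption, via Kodaira--Spencer) and applying Bishop's theorem correctly in the non--compact ambient manifold $\inverse h(U)$ --- with the supports confined to the compact set $\inverse h(K')$ --- together with the identification of cycle convergence with convergence in the relative Barlet space. Neither is deep, which is presumably why the result is regarded as folklore; but these are the analytic inputs that carry the proof, the homological bookkeeping being routine. Compare \cite[Prop.~1.1]{TelemanDloussky}.
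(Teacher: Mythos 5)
Your proposal is correct and follows essentially the same route as the paper: Kodaira--Spencer stability to produce a fibrewise K\"ahler form over a shrunken base, a cohomological argument showing the volume is a continuous function pulled back from the base (hence bounded over compact subsets), and then a bounded-volume compactness criterion for cycles to get properness. The only cosmetic differences are that the paper makes the form positive on the total space by adding $M\cdot h^*(\omega_2)$ and cites Barlet's properness criterion \cite[Thm.~1]{barlet99} directly, whereas you compare with an auxiliary hermitian metric on compact subsets and run the Bishop-type sequential compactness argument by hand.
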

\begin{proof}
For simplicity, we may assume that $T$ is a polydisc with centre $0$. Then $T$ does not contain any compact cycles except points, and hence the relative Barlet space $\gothB(\gothX/T)$ and the absolute Barlet space $\gothB(\gothX)$ coincide near any cycle which is associated with a smooth manifold completely contained in one fibre. In order to apply a result of Barlet \cite[Thm.~1]{barlet99} that ensures properness we need to bound the volume of cycles with respect to a suitable hermitian metric, which we will now construct.

By \cite[Thm. 15]{kodaira-spencer60} we can find a smaller polydisc $U\subset T$ containing $0$ such that there exists a real smooth 2--form  $\omega_1$ on $\gothX_U:=\inverse h (U)$ which restricts to a K\"ahler form on each fibre. Let $\omega_2$ be the 2--form on $U$ associated to an  arbitrary hermitian metric on $U$. Then, possibly replacing $U$ with a relatively compact open subset,  there exists a constant $M$ such that $\omega=\omega_1+M\cdot h^*(\omega_2)$ is an everywhere positive 2--form on  $\gothX_U$ that additionally restricts to a K\"ahler form on each fibre. (Note that $\omega$ need not be closed on the total space.) Then, the volume function is defined as
\[ \mathrm{vol}_\omega\colon \gothB(\gothX_U) \to \IR, \qquad [C]\mapsto \int_C\omega^n.\]

Choosing a differentiable trivialisation $\gothX_U\isom \gothX_0\times U$ we identify the real (co)\-ho\-mology  of $\gothX_U$ with the (co)\-ho\-mology of $\gothX_0$. With this identification $\omega$ induces for each $n$ a family of classes $[\omega^n_t]\in H^{2n}(\gothX_0, \IR)$ depending on $t\in U$.

Now let $g\colon\gothB\to U$ be a connected component of $\gothB_n(\gothX_U/U)$ together with the projection to $U$. Every cycle in $\gothB$ induces the same homology class $\alpha=\alpha_\gothB\in H_{2n}(\gothX_0, \IR)$. Consequently, the volume of a cycle  $[C]\in \gothB$ with $h([C])=t$
can be expressed in terms of the pairing,
\[ \mathrm{vol}_\omega(C)=\int_C \omega^n=\langle [\omega^n_t], \alpha\rangle,\]
which implies that $\mathrm{vol}\restr\gothB=g^*\phi$ for a continuous function $\phi\colon U\to \IR$.

For every relatively compact subset $K\subset U$ every cycle in $\inverse g(K)$ is contained in the compact set $\inverse h (\bar K)\subset \gothX_U$ and its volume is bounded by the maximum of $\phi$ on $\bar K$. Thus $\inverse g(K)$ satisfies the assumptions of  \cite[Thm.~1]{barlet99} and is hence relatively compact in $\gothB(\gothX_U)$. In particular, $g^{-1}(K)$ is proper over $K$.
\end{proof}

\begin{rem}
Fujiki \cite[Rem.~4.3]{fujiki78b} shows that properness may fail for $\gothB(\gothX/T)\to T$ even if $\gothX$ is compact and all fibres of $h\colon \gothX \to T$ are projective.
\end{rem}

\subsection{Barlet spaces and meromorphic fibrations}
Recall that if $(Z_s)_{s\in S}$ is an analytic family of compact $n$--cycles in $X$, then its \emph{graph} $\{(x,s)\in X\times S \mid x\in \mathrm{supp}(Z_s)\}$ is an analytic subset of $X\times S$ by \cite[Ch.~VIII, Thm.~2.7]{SCVVII}. If we equip this analytic subset with the reduced structure we obtain a complex space, which is proper over $S$. If $S=\gothB(X)$ and $(Z_s)_{s\in S}$ is the universal family, we will write
\begin{equation}\label{diagram: whole Barlet}
\begin{gathered}
\xymatrix{ \gothU(X)\ar[r]^\epsilon\ar[d]^\pi & X\\ \gothB(X)}
\end{gathered}
\end{equation}
for the complex space associated to the universal family where $\pi$ and $\epsilon$ are induced by the projections. The following lemma (most parts of which are certainly well--known to experts) will be applied in our study of Lagrangian fibrations.
\begin{lem}\label{Chowiso}
Let $X$ be a compact and connected K\"ahler manifold and $f\colon X\to B$ a surjective map with connected fibres to a normal complex space $B$. Let $U_{\rm pure} \subset B$ be the Zariski--open set over which the fibres are of pure dimension $d=\dim X - \dim B$. Let $B_\sm$ denote the set of smooth points of $B$. Then, the following holds:
\begin{enumerate}
\item The graph of $f$ defines a meromorphic family of cycles in $X$.
  \setcounter{saveenum}{\value{enumi}}
\end{enumerate}
Let $\gothB$ be the union of the irreducible components of the Barlet space $\gothB(X)$ that contain all fibres $[X_b]$ of $f$ over points $b \in U_{\rm pure}$, and let $\pi\colon \gothU\to \gothB$ be the projection of the graph of the universal family over $\gothB$.
\begin{enumerate}
  \setcounter{enumi}{\value{saveenum}}
\item $\gothB(X)$ is smooth at $[X_b]$ for any point $b \in U_{\rm pure}\cap B_\sm$ such that $X_b$ is smooth. Consequently, $\gothB$ is irreducible.
\item The (meromorphic) classifying map $\mu\colon B \dasharrow \gothB$ induces a holomorphic bijection of $U_{\rm pure}$ onto its Zariski--open image $\mu(U_{\rm pure}) \subset \gothB$.
\item The evaluation map $\epsilon\colon\gothU \to X$ is an isomorphism on $\pi^{-1}\left(\mu(U_{\rm pure})\right)$ satisfying $\mu \circ f= \pi\circ  \inverse \epsilon$. In particular, $\epsilon$ is bimeromorphic.
\end{enumerate}
\end{lem}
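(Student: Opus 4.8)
The plan is to analyse the four assertions in turn, starting from the observation that everything is controlled by the behaviour of $f$ over the open set $U_{\rm pure}$, where its fibres are honest $d$-cycles. For part \refenum{i}, I would feed the graph $\Gamma_f \subset X \times B$ of $f$ into Proposition~\ref{prop:meromofamily}: shrinking $B$ to its smooth locus $B_\sm$ and then to $U_{\rm pure}\cap B_\sm$, the restriction of $\Gamma_f$ is $S$-proper and purely $(d+\dim S)$-dimensional, so Proposition~\ref{prop:analyticfamiliycriterion} yields an analytic family of $d$-cycles there; taking the Zariski closure of the classifying map over all of $B$ and invoking Proposition~\ref{prop:meromofamily} identifies $\Gamma_f$ with a meromorphic family, i.e.\ $\mu\colon B\dasharrow\gothB_d(X)$ is meromorphic. (One must check that the ambient $B$ may be taken irreducible, which follows since $X$ is connected and $f$ has connected fibres, so $B$ is irreducible.)

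For part \refenum{ii}, the key input is deformation theory of the smooth fibre. If $b\in U_{\rm pure}\cap B_\sm$ with $X_b$ smooth, then — since $f$ is equidimensional over $U_{\rm pure}$ and Lagrangian-type fibres of maps from hyperk\"ahler manifolds have trivial-or-torus normal behaviour — one shows $N_{X_b/X}\cong \Omega^1_{X_b}$ (this is the Matsushita/Voisin computation that the general fibre's normal bundle is the cotangent bundle, a consequence of $X_b$ being Lagrangian via $\sigma$). Hence $H^1(X_b, N_{X_b/X})\cong H^1(X_b,\Omega^1_{X_b})$ has dimension $\dim B$ and matches the dimension of the family, while $H^0(X_b,N_{X_b/X})\cong H^0(X_b,\Omega^1_{X_b})=H^0(\Omega^1_B)_b$ records exactly the moving directions; an obstruction-theoretic argument (or directly: the fibres of $f$ over a neighbourhood already give a $\dim B$-dimensional smooth family through $[X_b]$, and $h^1(N)=\dim B$ forces the Douady space to be smooth of that dimension at $[X_b]$) shows $\mathscr{D}_d(X)$ is smooth there. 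Proposition~\ref{prop:DouadyBarlet} then transports smoothness to $\gothB(X)$ at $[X_b]$. Since the smooth locus of $\gothB$ contains the image of the connected dense open $U_{\rm pure}\cap B_\sm$ and $\gothB$ is a union of components all containing these points, $\gothB$ is irreducible.

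For part \refenum{iii}: the classifying map $\mu$ is holomorphic on $U_{\rm pure}$ by construction of the analytic family in \refenum{i}; it is injective there because two distinct points of $B$ have fibres that are either disjoint or differ as cycles (connectedness of fibres of $f$ means a fibre determines its point), and the image is constructible, hence Zariski-open in $\gothB$ after possibly shrinking — here I would cite that a holomorphic injection from a normal space onto a constructible subset of an irreducible space, with matching dimensions, has open image and is a biholomorphism onto it. For part \refenum{iv}, consider $\epsilon\colon\gothU\to X$ over $\pi^{-1}(\mu(U_{\rm pure}))$: a point $x\in X$ lying over $b\in U_{\rm pure}$ lies on the single cycle $[X_b]$, giving a set-theoretic inverse $\inverse\epsilon = \mu\circ f$, and since both spaces are reduced (indeed smooth over the relevant loci, $\gothU$ being the total space of the universal family over the smooth $\mu(U_{\rm pure})$ with smooth fibres $X_b$) the bijection is an isomorphism; the identity $\mu\circ f=\pi\circ\inverse\epsilon$ is then immediate, and since these loci are dense, $\epsilon$ is bimeromorphic.

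The main obstacle is part \refenum{ii}: identifying the normal bundle of a smooth fibre and deducing smoothness of the Douady/Barlet space at that point. The normal-bundle computation relies on $X_b$ being Lagrangian — but in the hypotheses of this lemma $f$ is merely a surjective map with connected fibres, not yet known to be Lagrangian, so one must either restrict to the setting where Matsushita's theorem (Theorem~\ref{thm: matsushita}) applies, or give a direct argument that the existing $\dim B$-parameter family of fibres already realises the full tangent space to the Douady space at $[X_b]$, forcing unobstructedness. I expect the cleanest route is the latter: combine lower-semicontinuity of $h^1(X_t, N_{X_t/X})$ with the explicit family coming from $f$ to pin down $h^1 = \dim B$ and hence smoothness, sidestepping a separate normal-bundle identification.
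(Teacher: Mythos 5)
The main gap is in part \refenum{ii}. The lemma is stated for an arbitrary surjective map with connected fibres from a compact connected K\"ahler manifold; there is no symplectic form in the hypotheses, so the identification $N_{X_b/X}\cong\Omega^1_{X_b}$ on which your primary route relies is meaningless here. You notice this yourself, but the repair you propose does not work: smoothness of the Douady space at $[X_b]$ cannot be extracted from information about $h^1(X_b,N_{X_b/X})$ alone, since the Zariski tangent space is $H^0(X_b,N_{X_b/X})$ and $H^1$ only houses obstructions --- ``$h^1(N)=\dim B$ forces smoothness'' is not a criterion, and the dimension count is wrong even in the Lagrangian--torus case, where $h^1(N_{X_b/X})=h^1(\Omega^1_{X_b})=n^2$, not $n$. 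The argument that actually works is elementary and needs no obstruction theory or semicontinuity: for $b\in U_{\rm pure}\cap B_\sm$ with $X_b$ smooth, equidimensionality over $U_{\rm pure}$ together with smoothness of $X_b$ and of $B$ at $b$ makes $f$ a submersion near $X_b$, so $N_{X_b/X}\cong f^*T_bB\otimes\ko_{X_b}$ is \emph{trivial} and $h^0(X_b,N_{X_b/X})=\dim B$, i.e.\ the Zariski tangent space of $\mathscr D(X)$ at $[X_b]$ has dimension $\dim B$. Since $\mu$ is injective on $U_{\rm pure}$ (fibres are disjoint), the component through $[X_b]$ has local dimension at least $\dim B$, so local dimension equals tangent--space dimension; hence $\mathscr D(X)$, and via Proposition~\ref{prop:DouadyBarlet} also $\gothB(X)$, is smooth at $[X_b]$, and $\gothB$ is irreducible.

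There is also a genuine problem in your part \refenum{iv}: a holomorphic bijection between reduced complex spaces need not be an isomorphism (think of maps factoring through a non--normal space such as a cuspidal curve), and your supporting claim that $\gothU$ is smooth over $\mu(U_{\rm pure})$ with smooth fibres is false --- over $U_{\rm pure}$ the fibres $X_b$ are only pure--dimensional, not smooth, and smoothness of $\gothB$ was only established at the special points $[X_b]$ above. The correct route is to pull back the universal family over $\mu(U_{\rm pure})$ via $\mu$, identify its reduction with the graph $\Gamma_f\restr{U_{\rm pure}}$, and observe that $\epsilon$ composed with this identification is the projection of the graph onto $f^{-1}(U_{\rm pure})$, hence an isomorphism; consequently $\epsilon$ on $\pi^{-1}\bigl(\mu(U_{\rm pure})\bigr)$ is a proper, finite, degree--one map onto the normal (indeed smooth) space $f^{-1}(U_{\rm pure})$ and therefore an isomorphism, with $\mu\circ f=\pi\circ\inverse\epsilon$ and bimeromorphy of $\epsilon$ following at once. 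Your parts \refenum{i} and \refenum{iii} are essentially the paper's argument and are fine, except that the asserted biholomorphy onto the image in \refenum{iii} is more than is claimed and is only obtained a posteriori through \refenum{iv}.
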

\begin{proof}
By Proposition~\ref{prop:meromofamily} the graph $\Gamma_f\subset X\times B$ of $f$ is the graph of a meromorphic family of cycles in $X$, which proves \refenum{i}.

We next show \refenum{ii} and \refenum{iii}.
Let $b\in U_{\rm pure }$ be a smooth point of $B$ such that the fibre $X_b$ is smooth as well. By Proposition~\ref{prop:DouadyBarlet} the Barlet space is isomorphic to the reduction of the  Douady space near $[X_b]$, and we can therefore estimate the  dimension of $\gothB(X)$ at $[X_b]$ by a deformation--theoretic computation. In a saturated neighbourhood of $X_b$ the map $f$ is an equidimensional holomorphic map between complex manifolds. Since additionally $X_b$ is smooth, $f$ is a smooth submersion near $X_b$. Consequently, the normal bundle of $X_b$ in $X$ is trivial.
It follows that the tangent space of the Douady space at the point $[X_b]$ has dimension $h^0(X_b, \kn_{X_b/X})=\dim B$.

As a consequence of the previous paragraph we have $\dim_{[X_b]} \gothB \leq \dim B$. On the other hand, the image $\mu(B)$ of the meromorphic classifying morphism $\mu\colon B \dasharrow \gothB$ is an analytic subvariety of $\gothB$, since $B$ is compact (Theorem~\ref{thm:Barletcompact}). Moreover, by Proposition~\ref{prop:analyticfamiliycriterion} the restriction of $\mu$  to  $U_{\rm pure}$ is holomorphic, and the image $\mu(U_{\rm pure})$ is Zariski--open in $\mu(B)$. Since the fibres of $f$ are mutually disjoint, $\mu$ is injective on $U_{\rm pure}$. Therefore, we also have $\dim B \leq \dim_{[X_b]} \gothB$. Since $B$ is irreducible, it follows that $\mu$ maps $B$ onto a single $\dim B$-dimensional irreducible component of $\gothB$. However, it then follows from the dimension--computation of the Zariski tangent space of $\mathscr{D}(X)$ at $[X_b]$ made above and from Proposition~\ref{prop:DouadyBarlet} that the Barlet space is smooth, hence irreducible 
 at $[X_b]$. Consequently, $\gothB$ is irreducible and smooth at $[X_b]$. This shows \refenum{ii} and \refenum{iii}.

In order to prove \refenum{iv}, we look at the diagram
\[
\xymatrix{ \gothU \ar[r]^\epsilon\ar[d]^\pi & X\\ \gothB.}
\]
We note for later reference that $\epsilon$ is proper, since $\gothB$ is compact (Theorem~\ref{thm:Barletcompact}). We now restrict our attention to the open subset $U_{\rm pure}$. By pulling back the graph of the universal family over $\gothB$ to $U_{\rm pure}$ via the holomorphic map $\mu\restr{U_{\rm pure}}$, and denoting $\mu(U_{\rm pure})$ by $\bar U$, we obtain the following diagram
\[ \xymatrix{
(\mu^*\gothU\restr{\bar U})_{\mathrm{red}} \ar[r]^{\hat\mu}\ar[d]^f & \gothU\restr{\bar U} \ar[d]^\pi\ar[r]^>>>>>\epsilon & f^{-1}(U_{\rm pure})\\
U_{\rm pure} \ar[r]^\mu& \bar U,
}
\]
where $\mu^*\gothU\restr{\bar U} =\gothU\restr{\bar U}\times_{\bar U} U_{\rm pure}$.
Comparing $(\mu^*\gothU\restr{\bar U})_{\mathrm{red}}$ and $\Gamma_{f}\restr{U_{\rm pure}}$ inside $f^{-1}(U_{\rm pure})\times U_{\mathrm{pure}}$ we see that these are reduced analytic subsets with the same underlying topological space; hence, they coincide. Identifying $\Gamma_{f}\restr{U_{\rm pure}}$ with $(\mu^*\gothU\restr{\bar U})_{\mathrm{red}}$ we conclude that the composition $\epsilon \circ \hat\mu$ coincides with the projection from the graph $\Gamma_f|_{U_{\mathrm{pure}}}$ to $f^{-1}(U_{\mathrm{pure}})$. It is therefore an isomorphism. In particular, $\epsilon$ is a proper birational map with finite fibres onto a normal space, hence an isomorphism.
\end{proof}

The following result provides an extension of the previous discussion to the case of almost holomorphic maps.
\begin{lem}\label{lem:almosthol}
Let  $f\colon  X \dasharrow B$ be a surjective almost holomorphic map with connected fibres from a connected compact K\"ahler manifold $X$ to a normal complex space $B$. Let $b$ be a smooth point of $B$, assume that $F=X_b$ is a smooth strong fibre of $f$ and that $f$ is equidimensional over a neighbourhood of $b$. Then, there exists a unique irreducible component $\gothB$ of $\gothB(X)$ containing $[F]$. Furthermore, the evaluation map $\epsilon\colon~\gothU \to X$ from the graph of the universal family $\gothU$ over this component to $X$ is bimeromorphic.
\end{lem}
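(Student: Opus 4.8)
\emph{Proof proposal.} The plan is to reduce to the local analysis carried out in the proof of Lemma~\ref{Chowiso} by restricting $f$ to the locus where it is holomorphic and well behaved, and then to feed the outcome back into the global structure of the Barlet space. Set $d = \dim X - \dim B$. By hypothesis $f$ is holomorphic and proper over a Zariski-open set $V \subseteq B$ containing $b$, it is equidimensional over a neighbourhood of $b$, and $F = X_b$ is smooth; since smoothness of the fibres and smoothness of the points of $B$ are open conditions, there is an open neighbourhood $U$ of $b$, contained in $V \cap B_\sm$, over which $f$ restricts to a proper holomorphic map $f^{-1}(U) \to U$ between complex manifolds that is equidimensional with all fibres smooth of pure dimension $d$. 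As $X$ is connected, $f^{-1}(U)$ is dense in $X$. Applying Proposition~\ref{prop:analyticfamiliycriterion} to the graph of $f^{-1}(U) \to U$ inside $U \times X$ produces an analytic family $(X_u)_{u \in U}$ of compact $d$--cycles in $X$ and hence a holomorphic classifying map $\mu\colon U \to \gothB(X)$, which is injective because the fibres $X_u$ are pairwise disjoint.

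Next I would run the Douady--Barlet dimension count of part \refenum{ii} of Lemma~\ref{Chowiso} at every point $[X_u]$. Since $f^{-1}(U) \to U$ is a submersion along the smooth fibre $X_u$, the normal bundle $N_{X_u/X}$ is trivial, so $h^0(X_u, N_{X_u/X}) = \dim B$; by Proposition~\ref{prop:DouadyBarlet} this yields $\dim_{[X_u]}\gothB(X) = \dim_{[X_u]}\mathscr D(X)_{\mathrm{red}} \le \dim T_{[X_u]}\mathscr D(X) = \dim B$, while the holomorphic injection $\mu$ from the $(\dim B)$--dimensional manifold $U$ gives the reverse inequality. Hence $\gothB(X)$ is smooth of dimension $\dim B$ at each $[X_u]$, in particular at $[F]$, so $[F]$ lies on a unique irreducible component $\gothB$ of $\gothB(X)$; moreover $\mu(U)$ is open in $\gothB$, an injective holomorphic map between smooth spaces of equal dimension being an open embedding.

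Finally I would show that the evaluation map $\epsilon\colon \gothU \to X$ from the universal family over $\gothB$ is bimeromorphic, along the lines of the proof of part \refenum{iv} of Lemma~\ref{Chowiso}. By Theorem~\ref{thm:Barletcompact} the component $\gothB$ is compact, so $\epsilon$ is proper; its image is therefore closed and contains the dense open set $f^{-1}(U) = \bigcup_{u \in U} X_u$, hence equals $X$. Restricting the universal family over $\gothB$ to the open set $\mu(U) \cong U$ recovers the reduced graph of $f^{-1}(U) \to U$ inside $f^{-1}(U) \times U$, and under this identification $\epsilon$ becomes the projection of that graph onto $f^{-1}(U)$, an isomorphism. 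Thus $\epsilon$ restricts to an isomorphism from the open set $\pi^{-1}(\mu(U))$ onto the dense open set $f^{-1}(U) \subseteq X$; since the unique irreducible component of $\gothU$ dominating $\gothB$ has dimension $\dim X$ and any other component maps via $\pi$ onto a proper subvariety of $\gothB$ and hence has $\epsilon$--image of dimension $< \dim X$, the map $\epsilon$ is generically injective onto $X$, that is, bimeromorphic.

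The genuinely new point compared with Lemma~\ref{Chowiso} --- and the place where care is needed --- is the interplay between the holomorphic locus $U$ and all of $B$: because $f$ is merely almost holomorphic one cannot use compactness of $B$ to control the global image of the classifying map, and accordingly $\epsilon$ comes out only bimeromorphic rather than biholomorphic. The substantive ingredient is, exactly as before, the triviality of $N_{F/X}$ forcing smoothness of $\gothB(X)$ at $[F]$; the main thing to watch is the shrinking of $U$ so that all nearby strong fibres remain smooth, which is what makes $\mu(U)$ open in $\gothB$ and lets the final identification go through.
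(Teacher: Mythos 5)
The first half of your argument is fine and is essentially the paper's own computation: the Douady--Barlet comparison (Proposition~\ref{prop:DouadyBarlet}) together with the triviality of $N_{F/X}$ gives $\dim T_{[F]}\mathscr{D}(X)=\dim B$, the injective classifying map of the nearby fibres gives the reverse inequality, and smoothness of $\gothB(X)$ at $[F]$ yields the unique component $\gothB$. The second half, however, has a genuine gap. First, the claim that $f^{-1}(U)$ is dense in $X$ ``because $X$ is connected'' is false: equidimensionality and smoothness of the fibres are only guaranteed near $b$, so your $U$ is merely a classical neighbourhood, and a nonempty open subset of a connected manifold need not be dense (surjectivity of $\epsilon$ survives, via Remmert's proper mapping theorem, but density is also what your final step leans on). Second, and more seriously, knowing that $\epsilon$ maps $\pi^{-1}(\mu(U))$ isomorphically onto $f^{-1}(U)$ does not give generic injectivity of $\epsilon$. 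Nothing in your argument prevents cycles $Z_\beta$ with $\beta\in\gothB\setminus\mu(U)$ from passing through points of $f^{-1}(U)$, or from sweeping out all of $X$: the unique component $\gothU_0$ of $\gothU$ dominating $X$ could a priori have degree $\geq 2$ over $X$, with one sheet given by $\pi^{-1}(\mu(U))$ and further sheets coming from members of the family parametrised by points far from $\mu(U)$. Your component count only excludes contributions from the non-dominating components of $\gothU$; it says nothing about extra sheets of $\gothU_0$. (Compare $z\mapsto z^2$ on $\mathbb{P}^1$: it restricts to an isomorphism of a small disc onto its image, yet has degree $2$.) Since $f$ is only almost holomorphic, there is no fibration structure forcing the general member of $\gothB$ to lie in a fibre of $f$; this degree-one statement is exactly what must be proved, and it is the point you flag as delicate without actually resolving it.

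Closing this gap requires global control of the family, which is what the paper's proof supplies by a different route: it takes a Hironaka resolution $p\colon\widetilde X\to X$ on which $\widetilde f=f\circ p$ is holomorphic and biholomorphic over $f^{-1}(U)$, applies Lemma~\ref{Chowiso} to $\widetilde f$ (there the isomorphism locus lies over a Zariski-dense open subset of the compact base, so the relevant component of $\gothB(\widetilde X)$ is exhausted by limits of fibres), and then transports the family and the bimeromorphic inverse back to $X$ by pushing cycles forward via $p$ and Proposition~\ref{prop:meromofamily}, obtaining $\epsilon^{-1}=\widetilde p\circ\widetilde\epsilon^{-1}\circ p^{-1}$. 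If you want to stay on $X$ as in your proposal, you would instead have to use the meromorphic classifying map $B\dasharrow\gothB(X)$ associated with the graph of $f$ (Proposition~\ref{prop:meromofamily}; note $B$ is compact here) to show that $\gothB\setminus\mu(U)$ is contained in a proper analytic subset of $\gothB$; only then is the union of the supports of the cycles parametrised by $\gothB\setminus\mu(U)$ of dimension $<\dim X$, so that a general point of $X$ meets no cycle other than its fibre, and your degree-one conclusion follows. As written, the proposal does not contain this step.
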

\begin{proof}
By Hironaka there exists a modification $p\colon \widetilde X \to X$ of $X$ such that $\widetilde X$ is smooth, $p$ is a projective morphism, and  $\widetilde f := f \circ p$ is holomorphic; in particular, $\widetilde X$ is K\"ahler. Since $f$ is assumed to be almost holomorphic, there exists a Zariski--open smooth subset $U$ in $B$ such that $f\restr{f^{-1}(U)}$ is holomorphic and proper. As a consequence, $p\colon \widetilde X \to X$ can be chosen in such a way that the set where it is not biholomorphic  is disjoint from $f^{-1}(U)$. Moreover, possibly after shrinking $U$,  we may assume that the fibre $\widetilde X_b$ is smooth for all $b \in U$.
Next, we apply Lemma~\ref{Chowiso} to $\widetilde f$ and obtain a diagram
\[
\xymatrix{ \widetilde \gothU \ar[r]^{\widetilde \epsilon}\ar[d]^{\widetilde \pi} & \widetilde X\\ \widetilde \gothB,}
\]
in which $\widetilde \epsilon$ is bimeromorphic.

The graph $\widetilde \gothU \subset \widetilde \gothB \times \widetilde X$ is a pure--dimensional $\widetilde \gothB$--proper analytic subset of $\widetilde \gothB \times \widetilde X$. Mapping it to $\widetilde \gothB \times X$ using the map $p$, we obtain a pure--dimensional $\widetilde \gothB$--proper analytic subset $\Gamma$ of $\widetilde \gothB \times X$. Proposition~\ref{prop:meromofamily} implies that $\Gamma$ is the graph of a meromorphic family of cycles in $X$, parametrised by $\widetilde \gothB$. We denote the corresponding meromorphic map from $\widetilde \gothB$ to $\gothB(X)$ by $q$. Note that the restriction of $q$ to $\mu_{\widetilde f}(U)$ is an isomorphism onto its image. As in the proof of Lemma~\ref{Chowiso}, a dimension--computation of the Zariski tangent space at $[F]$ now shows that $\gothB(X)$ is actually smooth of dimension $\dim \widetilde\gothB = \dim B$ at $[F]$. Thus, denoting by $\gothB$ the  unique irreducible component of $\gothB(X)$ containing the point $[F]$, we see that $q$ is a bimeromorphic map from $\widetilde \gothB$ to $\gothB$. Denoting  the induced bimeromorphic map between the graphs of the universal families by $\widetilde p$, we obtain the following diagram:
\[ \xymatrix{\widetilde\gothU\ar@{-->}[dr]_{\widetilde p} \ar[r]^{\widetilde \epsilon}\ar[dd]_{\widetilde \pi} & \widetilde X\ar[dr]_p\ar[drr]^{\widetilde f}\\
 & \gothU\ar[r]_\epsilon\ar[d]^\pi & X \ar@{-->}[r]_f& B.\\
\widetilde\gothB\ar@{-->}[r]^{q}& \gothB}
\]
Consequently, $\epsilon^{-1}=\widetilde p\circ\inverse{\widetilde\epsilon}\circ\inverse p$ is a meromorphic inverse to $\epsilon$, as claimed.
\end{proof}

\section{The family of deformations of a Lagrangian subtorus}\label{sect:deformations}
Let $X$ be a hyperk\"ahler manifold and assume that $X$ contains a \emph{Lagrangian subtorus}, that is, a smooth Lagrangian subvariety $L$ of $X$ that is biholomorphic to a complex torus.
We want to test if $L$ behaves as if it were the fibre of a fibration.

More precisely, consider an irreducible component $\gothB$ of the Barlet--space containing $[L]$, which will be shown to be unique below, and let $\gothU$ be the graph of the universal family over $\gothB$ as in \eqref{diagram: whole Barlet}. The natural evaluation map $\epsilon \colon \gothU \to X$, which restricts to an embedding on each cycle, is induced by the projection to $X$.
As the general cycle parametrised by $\gothB$ is smooth, there is a proper analytic subset $\Delta \subseteq \gothB$ parametrising singular cycles, which we call the \emph{discriminant locus}; the family of cycles over it will be denoted by $(\gothU \times_\gothB \Delta)_\red=:\gothU_\Delta$. We will constantly refer back to this setup which we summarise in the following diagram
\begin{equation}\label{diagram:Barlet}
\begin{gathered}
\xymatrix{ \gothU_\Delta \ar@^{(->}[r]\ar[d]& \gothU \ar[r]^\epsilon\ar[d]_\pi & X\\ \Delta \ar@^{(->}[r]& \gothB.}
\end{gathered}
\end{equation}
By Lemma~\ref{Chowiso} the torus $L$ is the fibre of a fibration if and only if $\epsilon$ is an isomorphism (see Lemma~\ref{lem:epsilon birational} below for a sharpening of this observation). Evidence that this has a chance to be true can be obtained by studying deformations of $L$ in $X$.
\begin{lem}\label{deflem} Let $X$ be a hyperk\"ahler manifold of dimension $2n$ and let $L$ be a Lagrangian subtorus of $X$. Then, the following holds.
\begin{enumerate}
 \item The Barlet space $\gothB(X)$ is smooth of dimension $n$ near $[L]$. In particular, $[L]$ is contained in a unique irreducible component $\gothB$  of $\gothB(X)$ and  $\gothU$ is smooth of dimension $2n$ near $\pi^{-1}([L])$.
\item If $[L'] \in \gothB$ represents a smooth subvariety $L'$, then $L'$ is a Lagrangian subtorus of $X$.
\item The morphism $\epsilon$ is finite \'etale along smooth fibres of $\pi$. In particular, a sufficiently small deformation of $L$ is disjoint from $L$ and, there are no positive--dimensional families of smooth fibres through a general point $x\in X$.
\end{enumerate}
\end{lem}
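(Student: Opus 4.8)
The plan is to combine three ingredients: the identification $\kn_{L/X}\cong\Omega^1_L$ of the normal bundle of a Lagrangian submanifold with its cotangent bundle; Voisin's stability theorem for compact Lagrangian submanifolds of holomorphic symplectic manifolds, which asserts that the deformations are unobstructed, that every nearby deformation is again Lagrangian, and that the Douady space is smooth of dimension $h^0(L,\Omega^1_L)$ at $[L]$ (one can alternatively prove unobstructedness by a $T^1$-lifting argument, since all first-order deformations of a Lagrangian torus are Lagrangian); and the local identification of the reduced Barlet space with the Douady space from Proposition~\ref{prop:DouadyBarlet}. For \refenum{i}: since $L$ is Lagrangian, the symplectic form $\sigma$ induces an isomorphism $TX\restr{L}\cong\Omega^1_X\restr{L}$ carrying the subbundle $TL$ onto the conormal bundle, whence $\kn_{L/X}\cong\Omega^1_L$; as $L$ is a complex torus this bundle is trivial of rank $n$, so $h^0(L,\kn_{L/X})=n$. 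Voisin's theorem then makes $\mathscr{D}(X)$ smooth of dimension $n$ at $[L]$, and Proposition~\ref{prop:DouadyBarlet} transports this to $\gothB(X)$, which is therefore smooth of dimension $n$ near $[L]$; in particular $[L]$ lies on a unique irreducible component $\gothB$. Since the Douady universal family is flat, near $\pi^{-1}([L])$ it is a proper submersion onto a smooth base with smooth fibres, so $\gothU$ is smooth of dimension $2n$ there.

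For \refenum{ii}, set $\gothB^\circ:=\gothB\setminus\Delta$, the locus of smooth members, which is connected because $\gothB$ is irreducible. First I would show that every member over $\gothB^\circ$ is Lagrangian: by the continuity of integration over cycles (Theorem~\ref{thm:Barletcompact} and the Barlet formalism) the set of $[L'']\in\gothB^\circ$ on which $\sigma$ restricts to zero is closed, and it contains a neighbourhood of $[L]$ by \refenum{i} and Voisin's theorem. At any smooth Lagrangian member $L''$, Voisin's theorem applies again: combined with Proposition~\ref{prop:DouadyBarlet} it shows that $\gothB$ is smooth at $[L'']$ of dimension $h^0(\Omega^1_{L''})=q(L'')$, hence of dimension $n$ since $\gothB$ is irreducible, and that nearby members stay Lagrangian. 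An open--closed argument on $\gothB^\circ$ then yields that every smooth member is Lagrangian, that $\gothB$ is smooth of dimension $n$ everywhere, and that $\pi\colon\pi^{-1}(\gothB^\circ)\to\gothB^\circ$ is a proper submersion; in particular all smooth members are diffeomorphic to $L$ and satisfy $q(L')=n=\dim L'$.

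It remains to recognise each smooth member $L'$ as a complex torus, which I expect to be the main obstacle. Since $c_1$ of the relative normal bundle is a locally constant integral class over the connected $\gothB^\circ$ and vanishes at $[L]$, one has $c_1(\kn_{L'/X})=0$ in $H^2(L',\IR)$, hence $c_1(L')=-c_1(\kn_{L'/X})=0$. Now $L'$ is a projective manifold (Proposition~\ref{prop:Lagrangianprojective}) with vanishing first Chern class and irregularity equal to its dimension, and any such manifold is a complex torus: by the Beauville--Bogomolov decomposition~\cite{beauville83} a finite \'etale cover of $L'$ splits as a product of a torus, Calabi--Yau and hyperk\"ahler factors, and comparing irregularities (which equal the dimension only for the torus factor) and dimensions forces this cover to be a torus and then the deck group to act by translations, so $L'$ is a complex torus. (Alternatively, one may use that $L'$ is diffeomorphic to a torus and argue via its Albanese map.)

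Finally, for \refenum{iii}: a smooth fibre $L'$ of $\pi$ is a Lagrangian subtorus by \refenum{ii}, so $\kn_{L'/X}\cong\Omega^1_{L'}\cong\mathscr{O}_{L'}^{\oplus n}$ is trivial. Near $\pi^{-1}([L'])$ the space $\gothB$ is smooth of dimension $n$, $\gothU$ is smooth of dimension $2n$, and $\pi$ is a submersion (by \refenum{i}--\refenum{ii}), so the differential of $\epsilon$ at a point $([L'],x)$ is the map $T_xL'\oplus H^0(L',\kn_{L'/X})\to T_xX$, $(w,v)\mapsto w+v(x)$, which is an isomorphism because evaluation at $x$ is an isomorphism for the trivial bundle $\kn_{L'/X}$. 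Hence $\epsilon$ is a local biholomorphism along $\pi^{-1}([L'])$, and, being proper, finite \'etale there. In particular $\epsilon$ is generically finite, so only finitely many members of $\gothB$ pass through a general point of $X$, and there is no positive-dimensional family of smooth fibres through such a point; and if a sufficiently small deformation $L''$ of $L$ met $L$ at a point $x$, then $([L],x)$ and $([L''],x)$ would be two distinct points of $\gothU$ near $\pi^{-1}([L])$ mapped to $x$ by the locally injective $\epsilon$, forcing $[L'']=[L]$ --- so a sufficiently small deformation of $L$ is disjoint from $L$.
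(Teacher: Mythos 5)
Your treatment of parts (i) and (iii) follows essentially the same route as the paper: smoothness of the Douady space at $[L]$ with tangent space $H^0\bigl(L,\kn_{L/X}\bigr)\cong H^0\bigl(L,\Omega^1_L\bigr)$ of dimension $n$ (the paper quotes Donagi--Markman, you quote Voisin's stability theorem --- the same external input), transferred to $\gothB(X)$ via Proposition~\ref{prop:DouadyBarlet}; and, for (iii), the identification of $d\epsilon$ at $([L'],x)$ with the inclusion $T_xL'\subset T_xX$ together with the evaluation map $H^0\bigl(L',\kn_{L'/X}\bigr)\to \kn_{L'/X}(x)$, which is an isomorphism because $\kn_{L'/X}\cong\Omega^1_{L'}$ is trivial on a torus; this is exactly the content of the paper's sequence \eqref{eq:univfamilytangent}, only phrased via the fibration structure of $\gothU$ rather than the incidence-variety sequence (note that only the extension $0\to T_xL'\to T_{([L'],x)}\gothU\to H^0(\kn_{L'/X})\to 0$ is canonical, not your direct-sum splitting, but the isomorphism criterion is unaffected). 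The genuine difference is in (ii): the paper disposes of it by citing Donagi--Markman, Thm.~8.7(i) (equivalently the proof of Voisin's lemma), whereas you reprove it along the whole component --- closedness of the Lagrangian condition on the smooth locus by continuity of integration over cycles, openness by Voisin's theorem, deformation-invariance of $c_1(\kn)$ over the connected smooth locus to get $c_1(L')=0$, Ehresmann to get $b_1(L')=2n$ and hence $q(L')=n$, and finally the Beauville--Bogomolov decomposition (or an Albanese argument) to recognise $L'$ as a complex torus. Your route is longer but self-contained and makes explicit why the Lagrangian-torus structure propagates beyond small deformations, at the price of invoking the decomposition theorem; the paper's is a one-line citation. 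The remaining informalities --- the ``locally injective near $\pi^{-1}([L])$'' step in the disjointness claim, which strictly speaking needs a small compactness/limit argument, and properness of $\epsilon$, which the paper gets from compactness of $\gothB$ --- are at the same level of detail as the paper's own ``in particular'' and do not constitute gaps.
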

\begin{proof}
We first consider the Douady space $\mathscr{D}(X)$ near $[L]$. The proof of \cite[Thm. 8.7 (ii)]{DonagiMarkman} works equally well in the K\"ahler setting (see also \cite[Thm.~2.2]{Ra92} for the statement, and \cite[Thm.~VI.6.1]{LehnDiss} for a detailed proof), so $\mathscr{D}(X)$ is smooth at $[L]$ with tangent space $H^0\bigl(L,\, N_{L/X}\bigr)$.
Since $L$ is Lagrangian, the symplectic form induces an isomorphism $N_{L/X}\isom \Omega^1_L$. Since moreover $L$ is a complex torus,  we compute
\[  \dim_{[L]}\mathscr D (X) = h^0\bigl(L,\, N_{L/X}\bigr) = h^0\bigl(L,\, \Omega^1_L\bigr) = h^0\bigl(L,\, \ko_L^{\oplus n}\bigr) = n.\]
The comparison between Douady and Barlet spaces (Proposition~\ref{prop:DouadyBarlet}) then implies that also $\gothB(X)$ is smooth of dimension $n$ at $[L]$. This proves \refenum{i}.

Item \refenum{ii} is  \cite[Thm.~8.7 (i)]{DonagiMarkman}; it also follows from the proof of \cite[Lem.~1.5]{voisin92}.

For \refenum{iii}, let $y \in \pi^{-1}([L])$ with smooth $L$ and $\epsilon(y)=x$. As $\gothU$ and $X$ are both smooth at $y$ and $x$, respectively, and since $\epsilon$ is proper (Theorem~\ref{thm:Barletcompact}), it suffices to show that $T_{\gothU}(y)\to T_{X}(x)$ is an isomorphism. We have already noted that near the smooth point $[L] \in \gothB(X)$ the Barlet--space is biholomorphic to the Douady space. In addition, the graph $\gothU$ of the universal family over $\gothB$ is biholomorphic to the universal family over $\mathscr{D}(X)$ near the fibre $\inverse\pi([L])$. Consequently, the universal family over the Douady space can also be interpreted as the incidence variety
\[\left\{(z,[L]):z\in L\subseteq X\right\} \subset X \times \mathscr{D}(X).\]
On the level of tangent spaces this interpretation leads to an exact sequence \begin{equation}\label{eq:univfamilytangent}
0 \to T_{\gothU}(y) \to T_{X}(x)\oplus H^0\bigl(L,\,N_{L/X}\bigr) \to N_{L/X}(x)
\end{equation}
and the composition of the first morphism with the projection to $T_{X}(x)$ is the differential of $\epsilon$. Now the  Lagrange condition on $L$ implies that the horizontal arrows in the following diagram are isomorphisms
\[\begin{xymatrix}{
H^0\bigl(L,\, N_{L/X}\bigr) \ar^{\cong}[r] \ar_{\mathrm{ev}}[d]& H^0\bigl(L,\, \Omega_L^1 \bigr) \ar^{\mathrm{ev}}[d] \\
N_{L/X} (x) \ar^{\cong}[r]& \Omega_L^1(x).
}
\end{xymatrix}\]
Since $L$ is a torus, the cotangent bundle $\Omega_L^1$ is trivial, and therefore the evaluation map $\mathrm{ev}\colon H^0\bigl(L,\, \Omega_L^1 \bigr) \to \Omega_L^1(x)$ is an isomorphism. It follows that the evaluation map $\mathrm{ev}\colon H^0\bigl(L,\,N_{L/X}\bigr) \to N_{L/X}(x)$ is likewise an isomorphism. Using \eqref{eq:univfamilytangent} we conclude that the same is true for $T_{\gothU}(y)\to T_{X}(x)$.
\end{proof}
\begin{lem}\label{lem:epsilon birational}
Let $X$ be a hyperk\"ahler manifold containing a Lagrangian subtorus $L$. Then $X$ admits an almost holomorphic Lagrangian fibration with strong fibre $L$ if and only if the evaluation map $\epsilon$ in diagram \eqref{diagram:Barlet} is bimeromorphic.
\end{lem}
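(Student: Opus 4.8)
The plan is to prove the two implications separately, working throughout with the notation of diagram~\eqref{diagram:Barlet} and using Lemmas~\ref{deflem} and~\ref{lem:almosthol}.

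Assume first that $\epsilon$ is bimeromorphic, and set $f:=\pi\circ\epsilon^{-1}\colon X\dasharrow\gothB$ as the candidate almost holomorphic Lagrangian fibration. The heart of this direction is to promote the local statement of Lemma~\ref{deflem}\refenum{iii} --- that $\epsilon$ is a local biholomorphism at every point of $\pi^{-1}(\gothB\setminus\Delta)$ --- to the global statement that $\epsilon$ restricts to an \emph{open immersion} on $\pi^{-1}(\gothB\setminus\Delta)$. Injectivity along a single fibre is already known, since $\epsilon$ embeds every cycle; so the only possible obstruction is that two points lying on distinct smooth fibres have the same image in $X$. But near each of these two points $\epsilon$ is a biholomorphism onto an open set, so their common image would then lie in the interior of the locus of points of $X$ with at least two $\epsilon$-preimages, contradicting bimeromorphicity of the proper morphism $\epsilon$ (for which that locus is contained in a proper analytic subset). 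Hence $\epsilon$ maps $\pi^{-1}(\gothB\setminus\Delta)$ isomorphically onto an open subset $V\subseteq X$; since $\pi$ is proper (Theorem~\ref{thm:Barletcompact}), $f|_V\colon V\to\gothB\setminus\Delta$ is holomorphic, proper and surjective, and a short argument with the graph of $f$ shows $f^{-1}(\gothB\setminus\Delta)=V$, so $f$ is almost holomorphic. Its strong fibres are the cycles $\epsilon(\pi^{-1}(b))$ with $b\notin\Delta$; these are smooth, hence Lagrangian subtori by Lemma~\ref{deflem}\refenum{ii}, in particular irreducible and connected, so $f$ is an almost holomorphic Lagrangian fibration. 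Finally $L$ is smooth, so $[L]\in\gothB\setminus\Delta$ and $L=\epsilon(\pi^{-1}([L]))$ is a strong fibre, as required.

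For the converse, suppose $X$ carries an almost holomorphic Lagrangian fibration $f\colon X\dasharrow B$ with strong fibre $L$, and put $b_0:=f(L)$. Since $L$ is a smooth Lagrangian subvariety it has dimension $n=\frac12\dim X$, which forces $\dim B=n$; by upper semicontinuity of fibre dimension $f$ is then equidimensional in a neighbourhood of $b_0$, and --- after replacing $B$ by its normalisation if necessary --- $b_0$ is a smooth point of $B$, because $L$ is a smooth fibre of the expected dimension. Thus the hypotheses of Lemma~\ref{lem:almosthol} are satisfied with $F=L$, and that lemma yields that the evaluation map from the graph of the universal family over the component of $\gothB(X)$ containing $[L]$ is bimeromorphic. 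By Lemma~\ref{deflem}\refenum{i} this component is exactly the space $\gothB$ of diagram~\eqref{diagram:Barlet}, so $\epsilon$ is bimeromorphic.

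I expect the injectivity step in the first implication to be the main obstacle: one must upgrade the purely infinitesimal information ``$\epsilon$ is \'etale along smooth fibres'' to the genuinely global conclusion ``$\epsilon$ is an open immersion on the smooth locus'', and this is precisely the point where bimeromorphicity of $\epsilon$ has to be exploited in order to exclude overlaps between distinct Lagrangian cycles. The converse direction, by contrast, is essentially a bookkeeping application of Lemma~\ref{lem:almosthol}, the only mildly delicate points being the verification that $B$ is smooth at $b_0$ and that $f$ is equidimensional near $b_0$.
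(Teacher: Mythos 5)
Your converse direction is essentially the paper's proof: the paper simply invokes Lemma~\ref{lem:almosthol}, and your added verifications are in the right spirit (though note that your justification that $b_0$ is a smooth point of the normalised base --- ``because $L$ is a smooth fibre of the expected dimension'' --- is not an argument: a set-theoretically smooth fibre of the right dimension does not rule out multiple-fibre/quotient phenomena; the paper leaves this point implicit as well). The genuine gap is in the forward direction. Your injectivity argument only excludes a point lying on \emph{two distinct smooth} cycles. It does not exclude that a point $x\in V=\epsilon(\pi^{-1}(\gothB\setminus\Delta))$ also lies on a cycle parametrised by $\Delta$, i.e.\ that $\epsilon^{-1}(x)$ meets $\gothU_\Delta$. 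Your ``interior of the two-preimage locus'' trick fails there, because $\epsilon$ need not be open near the second preimage (it may contract $\gothU_\Delta$), so the multi-preimage locus can remain nowhere dense in $X$ and still meet \emph{every} smooth cycle: it can be a divisor, and an $n$-dimensional torus in a $2n$-fold has no reason to avoid a divisor. At such a point $x$ the graph of $f=\pi\circ\epsilon^{-1}$ --- which is just $\gothU$ viewed inside $X\times\gothB$ --- has at least two points over $x$, so $f$ is indeterminate at $x$. Hence the step ``a short argument with the graph of $f$ shows $f^{-1}(\gothB\setminus\Delta)=V$'', together with holomorphy and properness of $f|_V$ over all of $\gothB\setminus\Delta$, is unjustified; and nothing in your argument guarantees that $L$ itself avoids the indeterminacy locus, which is exactly the content ``$L$ is a strong fibre''.

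The missing ingredient is Zariski's main theorem: $\epsilon$ is proper and bimeromorphic onto the normal (smooth) space $X$, hence has connected fibres. Combined with Lemma~\ref{deflem}~\refenum{iii}, every fibre of $\epsilon$ over a point of $L$ is a connected set containing an isolated point, hence a single point; by properness, $\epsilon$ is then an isomorphism from a neighbourhood of $\pi^{-1}([L])$ onto a neighbourhood of $L$ in $X$. This is what the paper's one-line justification ``because $\epsilon$ is \'etale near $L$'' is really using. It shows at once that no other cycle --- smooth or singular --- passes through any point of $L$, that $[L]$ lies outside the analytic subset $\Delta\cup\pi(\epsilon^{-1}(I))$ of $\gothB$ (with $I$ the non-isomorphism locus of $\epsilon$) whose complement serves as the open set $U$ in the definition of almost holomorphic, and that $L$ is a fibre of $f|_{f^{-1}(U)}$. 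The paper additionally passes to the Stein factorisation of a holomorphic model of $\pi\circ\epsilon^{-1}$ to get a normal base with connected fibres; your direct use of $\gothB$ is acceptable in spirit, but only after the connectedness-of-fibres argument above is inserted.
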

\begin{proof}
If $\epsilon$ is bimeromorphic, then the Stein factorisation of a holomorphic model of $\pi\circ\inverse\epsilon$ yields a meromorphic map from $X$ to a compact normal complex space. This map is a holomorphic Lagrangian fibration near $L$ because $\epsilon$ is \'etale near $L$ by Lemma~\ref{deflem}.
The other direction follows from Lemma~\ref{lem:almosthol}.
\end{proof}

In view of Lemma~\ref{deflem} one might wonder whether, given a $k$-cycle $L$ on a compact K\"ahler manifold $X$ of dimension $n+k$ such that
\begin{itemize}
\item the Barlet space of $X$ is smooth at $[L]$,
\item the irreducible component containing $L$ is $n$--dimensional, and
\item the evaluation map $\epsilon$ is generically \'etale,
\end{itemize}
the map $\epsilon$ automatically has degree one. In other words, are deformations of $L$ fibres of a (meromorphic) fibration on $X$?
As the following example shows this is in general not the case.
\begin{exam}
Let $X = \{f_3 = 0\}\subset \mathbb{P}^4$ be a smooth cubic threefold. Then, $X$ is a Fano manifold covered by lines. Let $L \subset X$ be a general line in the covering family $\gothF$ of lines. Then, the normal bundle of $L$ is trivial, and the Barlet space $\gothB (X)$ of $X$ is smooth of dimension $2$ at the point $[L]$. Let $\gothB$ be the irreducible component containing $ [L] $. Then, the evaluation morphism from the graph $\gothU$ of the universal family over $\gothB$ to $X$ is \'etale along smooth fibres of $\pi\colon  \gothU \to \gothB$; i.e., properties \refenum{i} and \refenum{ii} of Lemma~\ref{deflem} hold. However, an explicit computation (see for example \cite[Sect.~1.4.2]{HwangVMRTSurvey}) shows that the variety of tangents to the family $\gothF$ at a general point of $X$ consists of $6$ points. It follows that $\epsilon$ has degree $6$ and is therefore not birational.
\end{exam}

\section{The non--projective case}\label{sect: non-proj}
In this section we answer Question B positively in the non--projective case.
\begin{theo}\label{mainnonproj}
Let $X$ be a non--projective hyperk\"ahler manifold of dimension $2n$ containing a Lagrangian subtorus $L$. Then the algebraic dimension of $X$ is $n$, and there exists an algebraic reduction $f\colon  X \to B$ of $X$ that is a holomorphic Lagrangian fibration with fibre $L$.
 \end{theo}
As an immediate consequence we obtain:
\begin{cor}\label{cor: almost holomorphic non projective}
Let $X$ be a non--projective hyperk\"ahler manifold and $f\colon X \dasharrow B$ an almost--holomor\-phic Lagrangian fibration. Then, there exists a holomorphic model for $f$; that is, there exists a normal complex space $B'$, bimeromorphic to $B$, such that $f$ extends to a holomorphic Lagrangian fibration $f\colon X \to B'$.
\end{cor}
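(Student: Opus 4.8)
The plan is to manufacture a Lagrangian subtorus out of the fibration $f$, apply Theorem~\ref{mainnonproj}, and then identify the resulting holomorphic Lagrangian fibration with $f$ by passing through the Barlet space.

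First I would extract a suitable strong fibre. Let $U\subseteq B$ be the dense Zariski-open subset over which $f$ is holomorphic and proper; after shrinking $U$ I may assume that $U$ is smooth, that $f\restr{\inverse f(U)}$ is equidimensional, and that a general fibre $L:=X_b$ with $b\in U$ is smooth. By the definition of an almost holomorphic Lagrangian fibration, $L$ is then a smooth connected Lagrangian submanifold of $X$, in particular of dimension $n=\tfrac12\dim X$; being a smooth fibre of the equidimensional map $f\restr{\inverse f(U)}$ between complex manifolds it is (as in the proof of Lemma~\ref{Chowiso}) a smooth submersion near $L$, so $N_{L/X}$ is trivial, and the symplectic form gives $N_{L/X}\cong\Omega^1_L$. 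Hence $L$ has trivial cotangent bundle and is a complex torus. Thus $(X,L)$ satisfies the hypotheses of Theorem~\ref{mainnonproj} (recall $X$ is non-projective), which yields an algebraic reduction $g\colon X\to B'$ that is a holomorphic Lagrangian fibration with fibre $L$; this $g$ will be the claimed holomorphic model.

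It remains to construct a bimeromorphic map $B\dasharrow B'$ compatible with $f$ and $g$. Let $\gothB$ be the unique irreducible component of $\gothB(X)$ containing $[L]$ (Lemma~\ref{deflem}(i)), with evaluation $\epsilon\colon\gothU\to X$ and projection $\pi\colon\gothU\to\gothB$. Applying Lemma~\ref{lem:almosthol} to $f$ with strong fibre $L$ shows that $\epsilon$ is bimeromorphic. On the $f$-side, the fibres of $f\restr{\inverse f(U)}$ form an analytic family of cycles in $X$ (Proposition~\ref{prop:analyticfamiliycriterion}) whose classifying map $U\to\gothB(X)$ takes values in $\gothB$ (irreducibility of $U$ together with smoothness of $\gothB(X)$ at $[L]$), is injective since the fibres are pairwise disjoint, and — $U$ and $\gothB$ both having dimension $n$ — is bimeromorphic onto a dense open subset of $\gothB$; it extends to a bimeromorphic map $\mu_f\colon B\dasharrow\gothB$, and the computation in the last part of the proof of Lemma~\ref{Chowiso} gives $\mu_f\circ f=\pi\circ\inverse\epsilon$. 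On the $g$-side, Lemma~\ref{Chowiso} applied to the holomorphic fibration $g$ provides a bimeromorphic classifying map $\mu_g\colon B'\dasharrow\gothB$ with $\mu_g\circ g=\pi\circ\inverse\epsilon$; the component appearing there is $\gothB$, since it contains $[L]$. Therefore $\inverse{\mu_g}\circ\mu_f\colon B\dasharrow B'$ is bimeromorphic and intertwines $f$ with $g$, and since $g$ is holomorphic, $(B',g)$ is the desired holomorphic model.

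I expect the only genuine subtlety to be this last matching step: verifying that the two classifying maps really land in one and the same component $\gothB$ of $\gothB(X)$ and that both $f$ and $g$ factor through the single meromorphic map $\pi\circ\inverse\epsilon\colon X\dasharrow\gothB$, rather than through a multivalued correspondence — everything else is bookkeeping built on Lemmas~\ref{Chowiso}, \ref{lem:almosthol}, and~\ref{deflem}. The production of the Lagrangian torus in the first step is routine, once one recalls that a compact K\"ahler manifold with trivial cotangent bundle is a complex torus; this is why the corollary is immediate given Theorem~\ref{mainnonproj}.
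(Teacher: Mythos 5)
Your proposal is correct and takes essentially the same route as the paper: the paper's own proof just observes that the general strong fibre is a Lagrangian torus and invokes Theorem~\ref{mainnonproj}, taking the identification of $f$ with the resulting algebraic reduction as immediate. Your extra work — producing the torus via the trivial normal bundle and matching $f$ with the new fibration through the Barlet component of $[L]$ using Lemmas~\ref{deflem}, \ref{Chowiso} and~\ref{lem:almosthol} — merely spells out details the paper leaves implicit, so the two arguments coincide in substance.
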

In Section~\ref{sect:holmodels} we will prove a corresponding statement in the projective setup using the minimal model program.
\begin{proof}
The general strong fibre of an almost--holomorphic Lagrangian fibration is a Lagrangian torus, so $X$ satisfies the assumptions of Theorem~\ref{mainnonproj}. Consequently, its algebraic reduction can be chosen to be the desired holomorphic fibration.
\end{proof}
The  proof of Theorem \ref{mainnonproj} relies heavily on the results of \cite{cop} where the algebraic reduction of a non--algebraic hyperk\"ahler manifold is studied in detail; we are very grateful to K. Oguiso for pointing us to this reference.

\subsection{Covering families of subvarieties}
One of the guiding ideas of the article \cite{cop} is to study the existence of subvarieties in fibres of algebraic reductions. The following definition collects basic notions related to this general problem.
\begin{defin}
Let $X$ be a compact K\"ahler manifold.
\begin{enumerate}
 \item We call $X$ \emph{simple} if it is not covered by positive--dimensional
irreducible compact proper analytic subsets.
\item We call $X$ \emph{isotypically semisimple} if there
exists a simple compact K\"ahler manifold $S$, a natural number $k \geq 1$, a complex space $Y$, and
generically finite surjective holomorphic maps $p\colon Y \to X$ and $q\colon Y \to S^k$.
\end{enumerate}
\end{defin}
\begin{rem}
If $X$ is simple of dimension $\dim(X) \geq 2$, then $a(X) =0$; see \cite[Ch.~VIII, Rem.~3.40]{SCVVII}.
\end{rem}
The following result will exclude some manifolds from being isotypically semisimple. It forms the technical core of our argument to prove Theorem~\ref{mainnonproj}.
\begin{prop}\label{prop:notsemisimple}
Let $X$ be a reduced complex space having a covering analytic family $Z \subseteq X \times T$ of positive dimensional 
 subspaces $Z_t \subsetneq X$ parametrised by a  compact complex space $T$. If there is  a dense open subset $U\subseteq T$ such that $Z_t$ is an irreducible Moishezon space for $t \in U$, then $X$ is not isotypically semisimple with $a(X) = 0$.
\end{prop}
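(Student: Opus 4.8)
The plan is to argue by contradiction: assume $X$ is isotypically semisimple and $a(X)=0$, and fix the data supplied by the definition, namely a simple compact K\"ahler manifold $S$, an integer $k\ge 1$, a complex space $Y$, and generically finite surjective holomorphic maps $p\colon Y\to X$ and $q\colon Y\to S^k$. Since $X$ carries a covering family of positive-dimensional subspaces, $\dim X\ge 1$, and hence $\dim Y=\dim S^k\ge 1$. First I would dispose of the cases $\dim S\le 1$: if $\dim S=0$, then $Y$ and therefore $X=p(Y)$ is zero-dimensional, contradicting $\dim X\ge 1$; if $\dim S=1$, then $S$ is a smooth projective curve, $S^k$ is projective, so $Y$ is Moishezon (a generically finite surjective cover of a Moishezon space is Moishezon, as its algebraic dimension then equals its dimension), hence so is its image $X=p(Y)$, forcing $a(X)=\dim X\ge 1$, again a contradiction. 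So one may assume $\dim S\ge 2$, in which case $a(S)=0$ by \cite[Ch.~VIII, Rem.~3.40]{SCVVII}; in particular $S$ is not Moishezon, and since a surjective holomorphic image of a Moishezon space is Moishezon, neither is $S^k$ (via any coordinate projection $S^k\to S$).

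Next I would transport the covering family along the correspondence $X\xleftarrow{\,p\,}Y\xrightarrow{\,q\,}S^k$. Pulling $Z\subseteq X\times T$ back along $p$ and pushing the result forward along $q$, and then performing the standard reductions (replacing preimages by suitable irreducible components, passing to a finite cover of the base so that fibres stay irreducible, and restricting to a dense open subset of the resulting irreducible compact parameter space $T'$), one obtains an analytic family $V\subseteq S^k\times T'$ whose general member $V_t$ is a positive-dimensional Moishezon subvariety of $S^k$. Positive-dimensionality is preserved because $p$ and $q$ are generically finite and the general member of a covering family meets the locus on which these maps are finite; the Moishezon property is preserved because a generically finite surjective cover of a Moishezon space, and the surjective holomorphic image of such under $q$, are again Moishezon. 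Since $S^k$ is not Moishezon, each $V_t$ is automatically a \emph{proper} subvariety of $S^k$, and the $V_t$ cover $S^k$.

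Finally I would extract from this a covering family of $S$ contradicting simplicity. Let $\pi_j\colon S^k\to S$ ($j=1,\dots,k$) be the coordinate projections. A positive-dimensional $V_t$ cannot be contracted by all the $\pi_j$ simultaneously; as $T'$ is irreducible, the generic value of $\dim\pi_j(V_t)$ is a constant $d_j$, and $d_{j_0}\ge 1$ for some index $j_0$. Then the subvarieties $\pi_{j_0}(V_t)\subseteq S$ have generic dimension $d_{j_0}\ge 1$, each is Moishezon (a holomorphic image of the Moishezon $V_t$) hence a \emph{proper} subvariety of the non-Moishezon $S$, and --- using compactness of $T'$, so that the image of the total space of the family under $\mathrm{id}_{T'}\times\pi_{j_0}$ is an analytic subset of $T'\times S$ with analytic image in $S$ by Remmert's proper mapping theorem --- one checks that $\bigcup_{t\in T'}\pi_{j_0}(V_t)=S$ and that a general point of $S$ lies on a positive-dimensional irreducible component of some $\pi_{j_0}(V_t)$. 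Thus $S$ is covered by positive-dimensional irreducible compact proper analytic subsets, contradicting the simplicity of $S$.

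The main obstacle is the second step: carrying the covering family faithfully through the two generically finite maps $p$ and $q$ and book-keeping the parameter spaces (the reductions to irreducible components and finite base changes needed to phrase everything as honest analytic families of cycles). The two inputs that make it work --- that being Moishezon is inherited by generically finite covers and by surjective holomorphic images, and that members of a covering family are not contracted by generically finite maps --- are elementary; once a covering family of positive-dimensional proper Moishezon subvarieties of $S^k$ is available, the contradiction with simplicity via the projections $\pi_j$ is short.
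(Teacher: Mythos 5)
Your overall route coincides with the paper's: argue by contradiction, transport the covering family through the correspondence $X \leftarrow Y \rightarrow S^k$, project to a suitable factor $S$, and use the Moishezon property of the members to contradict simplicity. Your preliminary reduction showing that $S$ is not Moishezon (after disposing of $\dim S\le 1$) is just a rearrangement of the paper's end-game, where the case ``the projected members fill $S$'' is excluded because it would force $S$, hence $S^k$, $Y$ and $X$, to be Moishezon, contradicting $a(X)=0$. However, two steps of your sketch are under-justified precisely where the paper inserts specific tools. First, the transfer of the Moishezon property through $p$: you invoke ``a generically finite surjective cover of a Moishezon space is Moishezon'', but a fibre of the pulled-back family $(p\times\mathrm{id}_T)^{-1}(Z)$ may have irreducible components contained in the locus where $p$ has positive-dimensional fibres; such a component is not generically finite over any subvariety of $Z_t$, and there is no a priori reason for it to be Moishezon. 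The paper avoids this by first replacing $Y$ with a smooth model on which $p$ is a \emph{projective} morphism (\cite[Ch.~VII, Thm.~2.8]{SCVVII}), so that every fibre component upstairs is projective over the Moishezon space $Z_t$ and hence Moishezon. In your setup one could instead discard the bad components, observing that their $q$-images lie in the fixed proper analytic subset $q(E)\subsetneq S^k$, where $E$ is the non-finite locus of $p$; but this argument must be made explicitly, since your stated principle does not cover those components.

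Second, the final contradiction. ``Simple'' as defined in the paper means that $S$ is not \emph{covered} by positive-dimensional proper compact analytic subsets, whereas your argument directly yields only that a \emph{general} point of $S$ lies on such a subset: for special parameters $t$ the fibre $V_t$ need not be Moishezon (so $\pi_{j_0}(V_t)$ could equal $S$), or could pass through a given point only in a zero-dimensional component, and Remmert's theorem gives $\bigcup_t\pi_{j_0}(V_t)=S$ without controlling these special members. The paper closes exactly this gap by restricting to the dense open set where the projected members have pure dimension $d$ with $0<d<\dim S$ and passing to the meromorphic classifying map $T\dasharrow\gothB_d(S)$; compactness of the cycle space of the compact K\"ahler manifold $S$ (Lieberman--Fujiki) then produces an analytic family of $d$-cycles whose supports cover all of $S$ and are automatically proper. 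Both points are fixable by standard arguments (indeed by the very devices the paper uses), so your proposal is correct in strategy but incomplete as written at these two places.
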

Before we proceed to the proof we need a preliminary Lemma, similar to \cite[\S 3, Prop.~2]{fujiki82}.
\begin{lem}\label{lem:irred covering}
Let $X$, $T$ be reduced compact complex spaces and $W \subseteq X \times T$ a reduced and  irreducible analytic subspace such that $\pi\colon W\to T$ is surjective. Then there exists a commutative diagram
\[
\xymatrix{
W'\ar@^{(->}[r] \ar[d]& X\times T' \ar^{\id \times h}[d]\\
W\ar@^{(->}[r] & X\times T
 }
\]
where $W'$ is a reduced complex space, $T'$ is a normal irreducible  complex space,
 and finite surjective morphisms $h\colon T'\to T$ and $h'\colon W'\to W$ such that the generic fibre of $W'\to T'$ is irreducible.
\end{lem}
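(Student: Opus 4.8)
The plan is to reduce, by normalising $T$ and $W$ and then passing to a Stein factorisation, to the assertion that a proper dominant morphism of normal irreducible complex spaces which coincides with its own Stein factorisation has irreducible general fibre, and afterwards to transport the resulting finite cover back into $X\times T'$. As a first reduction, note that since $W$ and $T$ are compact the map $\pi$ is proper. Let $T_1\to T$ be the normalisation and let $W_1$ be the unique irreducible component of $W\times_T T_1$ dominating $T_1$, equipped with its reduced structure; then $W_1$ is reduced and irreducible, $W_1\subseteq X\times T_1$ canonically, $W_1\to W$ is finite surjective, $W_1\to T_1$ is surjective, and $T_1$ is normal and irreducible. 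Composing the maps to be constructed with $T_1\to T$ and $W_1\to W$ at the very end, I may therefore assume from now on that $T$ is normal.

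Next, let $\nu\colon \widetilde W\to W$ be the normalisation --- a finite surjective morphism with $\widetilde W$ normal, irreducible and compact --- and write $e\colon \widetilde W\to W\hookrightarrow X\times T\to X$ for the induced map to $X$. Set $g:=\pi\circ\nu\colon \widetilde W\to T$, which is proper and surjective, and take its Stein factorisation $\widetilde W\xrightarrow{\ \alpha\ }T'\xrightarrow{\ h\ }T$, so that $\alpha$ is proper surjective with $\alpha_*\mathcal{O}_{\widetilde W}=\mathcal{O}_{T'}$ (in particular $\alpha$ has connected fibres) and $h$ is finite surjective. Since $\widetilde W$ is normal, $T'=\mathrm{Specan}_T(g_*\mathcal{O}_{\widetilde W})$ is normal, and it is irreducible because $\widetilde W$ is; this $h\colon T'\to T$ will be the required cover.

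The key point is that the general fibre of $\alpha$ is irreducible. Over a dense open $U'\subseteq T'$ the morphism $\alpha$ is flat of relative dimension $d:=\dim\widetilde W-\dim T'$, formation of $\alpha_*\mathcal{O}$ commutes with base change, and a general fibre $\widetilde W_{t'}$ is reduced (characteristic $0$) and satisfies $H^0(\widetilde W_{t'},\mathcal{O})=\mathbb{C}$, hence is connected. That $\alpha$ equals its own Stein factorisation --- equivalently, that $\mathcal{O}_{T'}$ is integrally closed inside $\alpha_*\mathcal{O}_{\widetilde W}$, which in the algebraic category says that the function field of $T'$ is algebraically closed in that of $\widetilde W$ --- then forces the \emph{geometric} general fibre, and so the general fibre itself, to be irreducible rather than merely connected; in the present compact complex setting one establishes this following \cite[\S 3, Prop.~2]{fujiki82}, by descending to a dense open subset over which the irreducible components of the fibres form a finite analytic family inside the relative Barlet space of $\alpha$ and observing that, $\widetilde W$ being irreducible, this family is connected (its monodromy acts transitively), so it must be trivial of degree one once the monodromy has been trivialised by the Stein factorisation. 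Granting this, I would finally set $W':=\bigl((e,\alpha)(\widetilde W)\bigr)_{\mathrm{red}}\subseteq X\times T'$, the reduced image of $\widetilde W$ under $(e,\alpha)\colon \widetilde W\to X\times T'$, which is compact since $\widetilde W$ is. The fibre of $W'\to T'$ over a general $t'$ is $e\bigl(\widetilde W_{t'}\bigr)\times\{t'\}$, the image of the irreducible set $\widetilde W_{t'}$, hence irreducible. Moreover $\mathrm{id}_X\times h\colon X\times T'\to X\times T$ is finite, so $h':=(\mathrm{id}_X\times h)|_{W'}\colon W'\to W$ is finite; it is surjective because $\nu=h'\circ(e,\alpha)$ is; and the square in the statement commutes by construction. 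Composing with the maps of the first reduction yields the diagram as stated.

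The step I expect to be the main obstacle is the promotion of connectedness to irreducibility of the general fibre of $\alpha$. In the algebraic category this follows at once from the fact that the function field of the base is algebraically closed in that of the total space, but since $T$ --- and hence $T'$ and $\widetilde W$ --- need not be Moishezon, the field of meromorphic functions may be too small to run that argument directly, and one must instead work over a dense open subset with the analytic family of irreducible components of the fibres; this is exactly the technical heart of \cite[\S 3, Prop.~2]{fujiki82} and requires some care with the relative Barlet space of $\alpha$ and with the transitivity of the monodromy forced by irreducibility of $\widetilde W$. A secondary, more clerical point is to check that all the above constructions are compatible with the initial reduction to a normal base, so that the final commutative square really has the form $\mathrm{id}\times h$ on $X\times T$.
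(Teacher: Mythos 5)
Your overall architecture (pass to a base change, Stein factorise a modification of $W$, push the result back into $X\times T'$) is close to the paper's, and the bookkeeping parts are fine: normality and irreducibility of $T'$, finiteness and surjectivity of $h$ and $h'$, and commutativity of the square via $\mathrm{id}\times h$ all check out, and your preliminary reduction to a normal $T$ is correct though redundant. However, the decisive step --- that the general fibre of $\alpha\colon\widetilde W\to T'$ is \emph{irreducible}, not merely connected --- is exactly the content of the lemma, and you do not prove it. You reduce it to connectedness via $\alpha_*\mathcal{O}_{\widetilde W}=\mathcal{O}_{T'}$ and then defer the promotion to irreducibility to an unexecuted monodromy/relative-Barlet argument ``following \cite[\S 3, Prop.~2]{fujiki82}'', which you yourself flag as the main obstacle. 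As you note, the function-field argument (algebraic closedness of the base's function field in that of the total space forcing geometric irreducibility) is unavailable since the spaces need not be Moishezon; ``reduced and connected'' does not imply irreducible; and one cannot simply invoke that fibres of a map from a \emph{normal} space are generically normal, since that is a generic-point/spreading-out statement from the algebraic category with no off-the-shelf analytic counterpart here. So as written the proof has a genuine gap at its core.

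The gap is closed by a small change of the starting point, which is what the paper does: instead of the normalisation $\nu\colon\widetilde W\to W$, take a resolution of singularities, so that $\widetilde W$ is \emph{smooth}. By generic smoothness the fibres of $\alpha$ over a dense open subset of $T'$ are smooth, and a smooth connected complex space is irreducible; hence the connectedness supplied by the Stein factorisation already yields irreducibility of the general fibre, with no Barlet-space or monodromy input needed. (This also makes your first reduction superfluous: $T'$ is normal simply because it is the Stein factorisation of a map from a normal --- indeed smooth --- space.) The remainder of your argument, taking $W'$ to be the reduced image of $(e,\alpha)$ in $X\times T'$ and checking that $h'$ is finite and surjective with irreducible general fibres over $T'$, then goes through essentially as you wrote it.
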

\begin{proof}
Take a resolution of singularities $\widetilde{W} \to W$. As $W$ is irreducible, so is $\widetilde{W}$. By generic smoothness, there is a dense open $V\subseteq T$ such that $U:=\pi^{-1}(V)\to V$ is smooth. So if $\pi\vert_U$ has reducible fibres, it has to have disconnected fibres. Let $\widetilde{W}\to T' \to T$ be the Stein factorisation of $\widetilde{W}\to T$. Note that $T'$ is irreducible, as $\widetilde{W}$ is irreducible, and normal, as it is a Stein factorisation and $\widetilde{W}$ is normal. Clearly, $\widetilde{\pi}\colon\widetilde{W}\to T'$ is also generically smooth. Moreover, $\widetilde{\pi}$ has connected fibres, hence the smooth fibres are irreducible. We now obtain $W' \subseteq X\times T'$ with the required properties as the image of $\widetilde{W}\to X\times T'$.
\end{proof}

\begin{rem}\label{rem:irred covering}
The proof of Lemma~\ref{lem:irred covering} shows that generically the fibres of $W'\to T'$ are just the irreducible components of the fibres of $W\to T$. More precisely, there is a dense open set $T^0\subseteq T$ such that for each $t\in T^0$ we have $W_t=\bigcup_{t'\in h^{-1}(t)} W'_{t'}$ as subspaces in $X$.
\end{rem}
\begin{proof}[Proof of Proposition~\ref{prop:notsemisimple}]
By definition $T$ is irreducible, and by pulling back the family to the normalisation if necessary, we may also assume $T$ to be normal. As $X$ is irreducible, there has to be an irreducible component of $Z$ dominating $X$, so we may additionally assume $Z$ to be irreducible.

Contrary to our claim, suppose that $X$ is  isotypically semisimple of algebraic dimension 0. Then, there is a simple compact K\"ahler manifold $S$, a compact complex space $Y$, and  generically finite surjective holomorphic maps
\[
\begin{xymatrix}{
Y \ar^p[r] \ar_{q}[d]& X \\
S \times \dots \times S & .
 }\end{xymatrix}
\]
We may replace $Y$ by any $Y'$ that maps generically finite onto $Y$. Hence, by resolving singularities we may assume $Y$ to be smooth, and by \cite[Ch.~VII, Thm.~2.8]{SCVVII} we may assume $p\colon Y\to X$ to be projective.
We now want to derive a contradiction by constructing from $Z$ a covering family of non--trivial cycles on the simple manifold $S$.

Consider an irreducible component $Z'\subseteq Y\times T$ of $(p \times id_T)^{-1}(Z)$ dominating $S^k$ and map it to $S^k \times
T$ via $(q \times id_T)$ to obtain the graph of a meromorphic family $W \subseteq S^k \times T$ parametrised by $T$ and dominating $S^k$, see Proposition~\ref{prop:meromofamily}.
Since $p$ is projective, for every $t\in U$ the fibre $Z'_t$ is Moishezon. Because images of
Moishezon spaces are Moishezon, see for example \cite[Ch.~VIII, Cor.~2.24]{SCVVII}, $W_t$ is likewise Moishezon.

By Lemma~\ref{lem:irred covering} and Remark~\ref{rem:irred covering}, we may replace  $T$ by a normal space (which we again denote by $T$) such that, after possibly shrinking $U$, the fibre $W_t$ is an irreducible Moishezon space for all $t\in U$.

Let $p_i\colon S^k\to S$ be a projection to one of the factors such that the general $W_t$ is not mapped to a point. Here, we use that $Z_t$, hence $W_t$ is positive dimensional. Then, $\overline{W}=(p_i\times\id_T)(W)$ yields a meromorphic covering family of $S$ with generically irreducible fibres. Moreover, $\overline{W}_U=\overline{W}\times_T U \subseteq \overline{W}$ is dense, dominates $S$, and $\overline{W}_U\to U$ has Moishezon fibres.

By semicontinuity of the fibre dimension there is a dense open subset $V\subset U$ such that $\overline W_t$ is of pure dimension $d$, which is the minimal dimension of a fibre of $\overline W_U\to U$; we have chosen the projection  $p_i\colon S^k\to S$ such that $ d > 0$.

If $d=\dim S$, then $S=\overline W_t$ for all $t$, because $S$ is irreducible and $\overline W_t$ is a closed  subspace of the same dimension. But then $S$ is Moishezon, hence $S^k$ is Moishezon, and so are $Y$ and $X$, as $q$ is generically finite and $p$ is surjective. This however contradicts $a(X)=0$.

If $0<d<\dim S$, consider the meromorphic classifying map $\mu\colon T \dasharrow \gothB_d(S)$, where $\gothB_d(S)$ is the Barlet space of $S$ classifying families of compact analytic $d$--cycles. It follows that the image $\mu(T)$ parametrises an analytic family of positive--dimensional cycles covering $S$, which contradicts the assumption that $S$ is simple.

Since by construction $d>0$ we reach the conclusion that $X$ cannot be isotypically semisimple with $a(X)=0$, as claimed.
\end{proof}

\subsection{Proof of Theorem~\ref{mainnonproj}}
Using the preparatory results obtained above we are now in the position to prove the main result of this section.

Suppose first that $a(X) = 0$. Then, $X$ is isotypically
semisimple by \cite[Cor.~2.5 (2)]{cop}. However, the deformations of $L$ cover $X$ and smooth deformations of $L$ in $X$ are projective (see Proposition~\ref{prop:Lagrangianprojective}). We may hence apply Proposition~\ref{prop:notsemisimple} to arrive at a contradiction.

As $X$ is a non--projective K\"ahler manifold, it cannot be Moishezon \cite{moishezon67}, so $0<a(X)<2n$. Then by  \cite[Thm.~3.1~(2)]{cop} the manifold $X$ is parabolic in the sense of Section~\ref{sect:q}. Consequently, by Theorem 2.3 and Theorem 2.4 of \cite{cop}, one can choose an algebraic reduction of one of the following two forms:
\begin{enumerate}
\item $f\colon X \to B$ is a holomorphic Lagrangian fibration, in particular, $a(X) =n$.
\item $f\colon X \dasharrow B$ is not almost holomorphic, and the very general fibre $X_b$ ($b \in B$) is
    isotypically semisimple with $a(X_b) = 0$. Moreover, in this case $a(X) < n$ \cite[Thm.~3.6, Thm.~3.7]{cop}.
\end{enumerate}

Let us first exclude the case \refenum{ii}. Assuming that \refenum{ii} holds we will construct a  family of positive-dimensional, generically projective cycles covering $X_b$. Consider a very general fibre $X_b$ of the algebraic reduction that intersects a general deformation $L_t$ of $L$, with $t\in \gothB$. Then, by the last statement in \refenum{ii} the family of intersections $X_b\cap L_t$ yields a covering analytic family of \emph{positive--dimensional} generically projective  subvarieties in $X_b$, as follows: In the notation of Diagram \eqref{diagram:Barlet} consider an irreducible component $\gothV$ of $\epsilon^{-1}(X_b) \to \pi(\inverse \epsilon (X_b))$ such that the evaluation morphism $\epsilon_\gothV\colon  \gothV \to X_b$ is still surjective and let $\gothC := \pi(\gothV) \subset \gothB$. Using Lemma~\ref{lem:irred covering} we may assume that the general fibre of $\pi_\gothV\colon  \gothV \to \gothC$ is irreducible. By Proposition~\ref{prop:meromofamily}, there exists a meromorphic map $\mu_\gothV\colon  \gothC \dashrightarrow \gothB(X_b)$ with graph $\gothV$. Then, the universal family over the image $\mu_\gothV(\gothC)$ is the desired family. Its general fibre is isomorphic to (a component of) $X_b \cap L_t \subset L_t$ for some general $t$ and therefore projective. Thus the  assumptions of Proposition~\ref{prop:notsemisimple} are satisfied for $X_b$ together with this family. This contradicts the fact that $X_b$ is isotypically semisimple with $a(X_b) = 0$.

So we are in case \refenum{i}, that is, the algebraic reduction
of $X$ has a holomorphic model $f\colon X\to B$ which is  a Lagrangian fibration. We still need to
show  that $L$ is one of the fibres. By \cite[Thm.~3.4]{cop} the map $f$ is the morphism associated to a line bundle $\kl$ satisfying $c_1(\kl).C=0$ for all curves $C\subseteq X$. It follows that every curve in $X$ is contracted by $f$. As a consequence, if $Y\subseteq X$ is a subvariety such that any two general points of $Y$ can be joined by a curve lying on $Y$, then $f$ contracts $Y$ to a single point in $B$. By Lemma~\ref{deflem} \refenum{iii} and Proposition~\ref{prop:Lagrangianprojective} this applies to $L$ itself and to any smooth deformation $L'$ of $L$. As $L'$ has dimension $n$, it is a component of a fibre of $f$. It follows that the image of the general fibre of $\pi$ under $\epsilon$ is a fibre of $f$. Thus, also $L$ is a fibre of $f$ by Lemma~\ref{Chowiso}~\refenum{iv}. This shows that the algebraic reduction is a holomorphic Lagrangian fibration with fibre $L$.
\qed

\section{Transporting fibrations along deformations}
We would now like to extend the result obtained in the last section to \emph{projective} hyperk\"ahler manifolds $X$ containing a Lagrangian torus $L$. The natural idea is to consider a deformation of  $(X,L)$ to a non--projective pair $(X', L')$ and then try to transport the fibration along the family. While non--projective hyperk\"ahler manifolds are dense in the universal deformation space of $X$, this might a priori no longer be true for deformations of the pair. We start by introducing some terminology.

\begin{defin}\label{def: defo}
 Consider a family of maps
 \begin{equation}\label{eq:defo}
\begin{gathered}
\xymatrix{ \gothL \ar[rr]^j\ar[dr]_p && \gothX\ar[dl]^h\\& T}
 \end{gathered}
\end{equation}
 over a connected complex space $T$. We call this datum a \emph{family of pairs of  a hyperk\"ahler manifold together with a Lagrangian subtorus} if  $p$ is a smooth family of complex tori,  $h$ is a smooth family of hyperk\"ahler manifolds,  and $j$ is a closed embedding, such that $j_t(\gothL_t)$ is a Lagrangian submanifold of $\gothX_t$ for all $t\in T$.

If $X$ is a hyperk\"ahler manifold containing a Lagrangian subtorus $L$ such that for some point $0\in T$ the map $j_0:\gothL_0\to \gothX_0$ is the inclusion of $L$ into $X$, then we call  such a family  of pairs, or by abuse of
terminology any fibre of such a family, a \emph{deformation of the pair $(X,L)$}.
\end{defin}
\begin{rem}
If $(X,L)$ is a hyperk\"ahler manifold together with a Lagrangian sub\-to\-rus, and if a family of maps as in diagram \eqref{eq:defo} is a smooth deformation of the pair $(X,L) = (\gothX_0, \gothL_0)$, then the fibres of $h$ are automatically hyperk\"ahler in an open neighbourhood of $0\in T$. Note however that a deformation in the large of $X$ might even fail to be K\"ahler, so the condition on $h$ is necessary to remain in our framework.

 Since every $\gothX_t$ is a hyperk\"ahler manifold and  $\gothL_0$ is a Lagrangian submanifold, for $t$ near $0$ the submanifold  $j_t(\gothL_t)$ is also Lagrangian, see \cite[Lem 1.5]{voisin92} and the proof of Lemma~\ref{deflem}. Hence, the same is true for all $t\in T$, as $p_*\Omega_{\gothL/T}^2$ is locally free and $T$ is connected.
\end{rem}
The main result of this section is
\begin{theo}\label{notstabproj}
Let $X$ be a projective hyperk\"ahler manifold and $L\subset X$ a Lagrangian subtorus. Then the following are equivalent.
\begin{enumerate}
\item $X$ admits an almost holomorphic Lagrangian fibration with strong fibre $L$.
 \item The pair $(X,L)$ admits a small deformation $(X', L')$ with non-projective $X'$.
\item There exists an effective divisor $D$ on $X$ such that $c_1(\mathscr{O}_X(D)|_L) = 0 \in H^{1,1}\bigl(L, \, \mathbb{R} \bigr)$. 
\end{enumerate}
\end{theo}
 
The proof of Theorem~\ref{notstabproj} will be given at the end of this section. It relies on two technical lemmas: Lemma \ref{lemma:char stab proj} which characterises deformability to non-projective $X'$ in terms of periods and Lemma \ref{lem:closed} which allows to ``transport'' Lagrangian fibrations along deformations.

Next, we recall the explicit description of small deformations of a hyperk\"ahler manifold $X$ via the local Torelli theorem.
\begin{rem}\label{rem:projective deformations}
Let $M$ be the universal deformation space of a hyperk\"ahler manifold $X$. By \cite[Prop.~22.11]{HuybrechtsHKM} we can identify $M$ near $[X]$ with the period domain
\[Q_X= \left\{[\sigma]\in\IP(H^2(X,\,\IC))\mid q(\sigma,\sigma)=0, \;q(\sigma,\bar{\sigma})>0\right\},\]
where $q$ is the Beauville--Bogomolov form introduced in Section~\ref{sect:q}. Now assume in addition that $X$ is projective.
By \cite[1.14]{huybrechts99} the subspace of $M$ consisting of those deformations $X_t$ of $X$ for which the class $[A] \in H^{1,1}\bigl(X,\,\IZ\bigr)$ of a given ample divisor $A$ remains of type $(1,1)$ (and hence $A$ continues to be an ample divisor) is given by $A^\perp=\{z \in Q_X \mid q(A,z)=0\}$. Consequently, there is a countable union of hypersurfaces in the period domain that parametrise those deformations of $X$ that are still projective.
\end{rem}
Since $X$ is simply--connected, $\mathrm{Pic}(X)$ injects into $H^2\bigl( X,\, \mathbb{C}\bigr)$. In the following we will hence not distinguish between divisors on $X$ and their classes in $H^2\bigl( X,\, \mathbb{C}\bigr)$.
\begin{lem}\label{lemma:char stab proj} Let $(X,L)$ be a projective hyperk\"ahler manifold together with a Lagrangian subtorus. We denote the inclusion of $L$ into $X$ by $j\colon L\into X$ and let
\[K=\ker\bigl(j^*\colon H^2\bigl(X,\, \IR\bigr)\to H^2\bigl(L,\,\IR\bigr)\bigr).\]
 Then the  following are equivalent.
\begin{enumerate}
\item Every small deformation of $(X,L)$ remains projective.
 \item There is an ample divisor $A$ on $X$ such that $A\in K^\perp_\IC$ or, equivalently, $K_\IC\subset A^\perp$.
\end{enumerate}
\end{lem}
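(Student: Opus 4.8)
The plan is to rephrase both conditions as statements about the period domain $Q_X$ and play them off against each other by a Baire‑category argument; the delicate point will be the last step, turning a class of positive square into an ample one.

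\emph{Step 1: the deformations of $(X,L)$ are dense in $Q_X\cap\IP(K_\IC)$.} Identify the universal deformation space $M$ of $X$ with $Q_X$ as in Remark~\ref{rem:projective deformations}, and for $[\sigma_t]\in Q_X$ write $\sigma_t\in H^2(X,\IC)$ for the corresponding period point under the flat identification, so $\sigma_0=\sigma$. For a family of pairs as in Definition~\ref{def: defo} over a (simply connected) polydisc, the restriction maps $j_t^*\colon H^2(\gothX_t,\IC)\to H^2(\gothL_t,\IC)$ are a flat section of the relevant local systems, hence all equal to $j^*$; thus $\gothL_t$ being Lagrangian, i.e.\ $j_t^*\sigma_t=0$, is equivalent to $\sigma_t\in K_\IC$, so the image in $M$ of the base $S$ of the universal deformation of $(X,L)$ lies in $Q_X\cap\IP(K_\IC)$. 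Since $K^{2,0}=\IC\sigma$ and $K^{0,2}=\IC\bar\sigma$ we have $\dim K_\IC=\dim K^{1,1}+2$, so $Q_X\cap\IP(K_\IC)$ is a quadric of dimension $\dim K^{1,1}$ in $\IP(K_\IC)$, smooth (hence locally irreducible) at $[\sigma]$ as $q(\sigma,\bar\sigma)>0$ with $\bar\sigma\in K_\IC$. On the other hand, identifying $H^1(X,T_X)$ with $H^{1,1}(X)$ and $H^1(L,N_{L/X})$ with $H^{1,1}(L)$ via $\sigma$, the obstruction to deforming $L$ along a deformation of $X$ becomes $j^*$, so (using unobstructedness of deformations of the pair, cf.\ the proof of Lemma~\ref{deflem} and \cite{voisin92}) the image of $S\to M$ has dimension $\dim\ker(j^*\colon H^{1,1}(X)\to H^{1,1}(L))=\dim K^{1,1}$. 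Being contained in, and of the same dimension as, $Q_X\cap\IP(K_\IC)$, this image is dense there near $[\sigma]$.

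\emph{Step 2: $(ii)\Rightarrow(i)$.} If $A$ is ample with $A\in K^\perp_\IC$, then for every deformation $(X_t,L_t)$ of $(X,L)$ we have $q(A,\sigma_t)=0$ by Step 1, so $A$ stays of type $(1,1)$; hence $X_t$ carries an integral class of positive square, is therefore hyperbolic, and so projective by [Prop.~26.13 of \cite{HuybrechtsHKM}]. \emph{Step 3: $(i)\Rightarrow$ a class of positive square in $K^\perp\cap\NS(X)$.} Suppose $K^\perp\cap H^2(X,\IZ)$ contains no such class; we contradict (i). By [Prop.~26.13 of \cite{HuybrechtsHKM}] and Step 1, for $[\sigma_t]$ near $[\sigma]$ in $Q_X\cap\IP(K_\IC)$ the manifold $X_t$ is projective iff some $\beta\in H^2(X,\IZ)$ satisfies $q(\beta,\sigma_t)=0$ and $q(\beta,\beta)>0$; by assumption such a $\beta$ does not lie in $K^\perp$. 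For each fixed $\beta\notin K^\perp$ the set $\{[\sigma_t]:q(\beta,\sigma_t)=0\}$ is a proper analytic subset of a small neighbourhood $V$ of $[\sigma]$ in $Q_X\cap\IP(K_\IC)$ (were it all of $V$, then $\beta^\perp$ would contain the linear span of $V$, which includes $\IC\sigma\oplus K^{1,1}_\IC$, forcing $\beta\perp\sigma$ and $\beta\perp K^{1,1}$, i.e.\ $\beta\in K^\perp$). Since $V$ is an irreducible complex manifold, it is not the union of these countably many proper analytic subsets (Baire), so their complement is dense in $V$; as the deformations of $(X,L)$ are dense in $V$ as well (Step 1), some deformation $(X_t,L_t)$ of $(X,L)$ has $X_t$ non‑projective, contradicting (i). Hence $K^\perp\cap H^2(X,\IZ)$ contains $\beta$ with $q(\beta,\beta)>0$, and $\beta\in\NS(X)$ since $\sigma\in K_\IC$.

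\emph{Step 4: from positive square to ample — the main obstacle.} It remains to replace $\beta$ by an \emph{ample} class in $K^\perp\cap\NS(X)$, and this is where the geometry of $L$ must enter beyond the formal period computation. The plan: first show $q$ is negative semi‑definite on $K\cap\NS(X)_\IR$ — a rational class in $K$ of positive square would be big (Fujiki relation, structure of the positive cone), its effective multiple $D$ would meet a general deformation $L_t$ of $L$ properly (the $L_t$ cover $X$ by Lemma~\ref{deflem}, and the restriction map to $L_t$ is constant along the family), so $D|_{L_t}$ would be a non‑zero effective divisor of class $j^*[D]=0$, which is absurd. Using this, the $q$-orthogonal projection to the rational subspace $K^\perp_\IR$ of a fixed ample $A_0$ remains in the component $\mathcal C$ of $\{q>0\}\subseteq\NS(X)_\IR$ containing the ample cone (the segment from $A_0$ to it stays in $\{q>0\}$), so $K^\perp_\IR\cap\mathcal C\neq\emptyset$. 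The remaining — and principal — difficulty is to show that $K^\perp_\IR$ actually meets the Kähler chamber of $X$; one expects this because the extremal curves bounding the walls of the ample cone are rational and hence cannot lie in any $L_t$, so their classes do not restrict trivially to $L$ and the corresponding walls cannot contain all of $K^\perp_\IR$. Granting this, $K^\perp_\IR$ contains an integral ample class (the ample cone being open and $K^\perp$ rational), which lies in $K^\perp\cap\NS(X)$ and yields $(i)\Rightarrow(ii)$, completing the equivalence.
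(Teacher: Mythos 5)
Your Steps 1--3 essentially reproduce the paper's mechanism: the implication (ii)$\Rightarrow$(i) is argued exactly as in the paper, and your Step 3 is the same irreducibility/countability argument (the locus of deformations of the pair inside $Q_X\cap\IP(K_\IC)$ cannot lie in a countable union of hypersurfaces unless it lies in a single one). One caveat there: as written you only establish that the pair--deformations are \emph{dense} in $Q_X\cap\IP(K_\IC)$ near $[\sigma]$, and a merely dense set can miss the complement of countably many proper analytic subsets; for the Baire step you need that the pair--deformations fill out a whole neighbourhood of $[\sigma]$ in $Q_X\cap\IP(K_\IC)$, which is exactly Voisin's result \cite[Cor.~0.2]{voisin92} that the paper cites (your dimension count can be upgraded to this, e.g.\ via unobstructedness plus an open--mapping argument, but it is not automatic from density).

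The genuine gap is Step 4, and it is fatal for the statement as given. What Step 3 produces is an integral class $\beta\in K^\perp\cap\NS(X)$ with $q(\beta)>0$; condition (ii) demands an \emph{ample} class in $K^\perp$, and you explicitly leave the upgrade as an expectation (``granting this''). This is not a cosmetic strengthening: in the proof of Theorem~\ref{notstabproj} the lemma is applied through $q(A,D)\neq 0$ for the effective divisor $D\in K$, which uses ampleness of $A$ -- for a mere positive--square class $\beta\in K^\perp$ one may well have $q(\beta,D)=0$, so your weaker output does not feed into that argument. Your sketched wall/extremal--curve heuristic is not carried out and would require genuine additional geometric input about the boundary of the ample cone. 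The paper's proof avoids the issue by different bookkeeping: using Remark~\ref{rem:projective deformations} (based on \cite[1.14]{huybrechts99}) it takes the countable family of hypersurfaces to consist of the sets $A^\perp$ for $A$ ample on $X$, so the irreducibility argument outputs an ample $A$ with $K_\IC\cap Q_X\subset A^\perp$ from the start; non--degeneracy of $q\restr{K}$ (proved via negative definiteness of $q$ on $A^\perp\cap H^{1,1}(X,\IR)$) then promotes this to $K_\IC\subset A^\perp$. So either adopt the paper's indexing by ample divisors of $X$ and justify that reduction, or supply an actual proof of your Step 4; as it stands, (i)$\Rightarrow$(ii) is not established.
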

\begin{proof}
We now prove $\refenum{i}\implies \refenum{ii}$ using the local description of the deformation space in terms of the period domain.
Deformations of $X$ that are induced by a deformation of the pair $(X,L)$ are given locally at $[X]$ by $K_\IC\cap Q_X$, see~\cite[Cor.~0.2]{voisin92}. Furthermore, this intersection is smooth, hence irreducible near $[X]$.  If all small deformations of $(X,L)$ remain projective, we can find an ample divisor $A$ on $X$ such that $K_\IC\cap Q_X\subset A^\perp\cap Q_X$, since an irreducible variety that is contained in a countable union of hypersurfaces is contained in one of them. We want to show $K_\IC\subset A^\perp$.

If $q\restr K$ was non--degenerate, $K_\IC\cap Q_X$ would be irreducible. In this case any small neighbourhood of $[X] \in K_\IC\cap Q_X \subseteq A^\perp$ would contain a basis of $K_\IC$ implying $K_\IC\subseteq A^\perp$. As $q$ and $K$ are defined over $\IR$, it is sufficient to prove non--degeneracy of $q \restr K$ over $\IR$. We have $H^{2,0}\oplus H^{0,2}\subseteq K_\IC$ by the Lagrange property, so if $q\restr{ K}$ was degenerate, it would be on $q\restr{ K \cap H^{1,1}(X)}$. Suppose there was $\delta \in K\cap H^{1,1}\bigl(X,\,\IR\bigr)$ with $q(\delta)=0$. 
But then $\delta$ would be contained in $A^\perp\cap H^{1,1}(X,\IR)$, where $q$ is negative definite; a contradiction.

If $\refenum{ii}$ holds, then we have $K_\IC\cap Q_X \subset A^\perp\cap Q_X $. Hence, $A$ remains an ample divisor on every small deformation of $(X,L)$, cf. Remark \ref{rem:projective deformations}. This implies $\refenum{i}$. 
%
\end{proof}


We will now pursue our idea to deform a pair $(X,L)$ to a non--projective pair, apply Theorem~\ref{mainnonproj} and then transport the fibration back along the deformation. 
\begin{lem}\label{lem:closed}
 Let
\[ \xymatrix{ \gothL \ar@{^(->}[rr]\ar[dr] && \gothX\ar[dl]^h\\& T}\]
be a family of pairs of a hyperk\"ahler manifold together with a Lagrangian subtorus (as in Definition~\ref{def: defo}) over a  one--dimensional complex disc $T$.
Assume that there is a dense subset $T'\subset T$ such that for all $t\in T'$ the fibre $\gothX_t$ admits a holomorphic Lagrangian fibration with fibre $\gothL_t$.
Then $\gothX_t$ admits an almost holomorphic Lagrangian fibration with strong fibre $\gothL_t$ for all $t\in T$.
\end{lem}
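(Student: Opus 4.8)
The plan is to use the relative Barlet space of the family $h\colon \gothX \to T$ to produce, in one stroke, a family of candidate fibrations specialising the holomorphic fibrations on the fibres $\gothX_t$ for $t \in T'$. First I would apply Lemma~\ref{deflem}~\refenum{i} fibrewise: since each $\gothL_t \subset \gothX_t$ is a Lagrangian subtorus, the absolute Barlet space $\gothB(\gothX_t)$ is smooth of dimension $n$ at $[\gothL_t]$, and by a standard relative deformation argument (or by invoking Proposition~\ref{prop: folklore} and the comparison with the relative Douady space, whose tangent space at $[\gothL_0]$ is $H^0(\gothL_0, N_{\gothL_0/\gothX})$ of dimension $n$ since the relative normal bundle is trivial on the torus fibre) the \emph{relative} Barlet space $\gothB(\gothX/T)$ is smooth at $[\gothL_0]$ of relative dimension $n$ over $T$ near $0$. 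Let $\gothS$ be the unique irreducible component of $\gothB(\gothX/T)$ through $[\gothL_0]$; after shrinking $T$, Proposition~\ref{prop: folklore} guarantees $g\colon \gothS \to T$ is proper, and it is also flat (hence surjective, since $T$ is a smooth curve and $\gothS$ is reduced and dominates a neighbourhood of $0$). Its fibre $\gothS_t$ is a component of $\gothB(\gothX_t)$ containing $[\gothL_t]$, and the universal family restricts to the graph $\gothU_{\gothS} \to \gothS$ with evaluation morphism $\epsilon_{\gothS}\colon \gothU_{\gothS} \to \gothX$, compatible with the projections to $T$.

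The key point is that $\epsilon_{\gothS}$ is bimeromorphic. Indeed, for $t \in T'$, the hyperk\"ahler manifold $\gothX_t$ carries a holomorphic Lagrangian fibration $f_t\colon \gothX_t \to B_t$ with fibre $\gothL_t$; by Lemma~\ref{Chowiso}~\refenum{iv} the evaluation map on the corresponding Barlet component is an isomorphism, and by uniqueness of the component through $[\gothL_t]$ (Lemma~\ref{deflem}~\refenum{i}) that component is exactly $\gothS_t$. Hence $\epsilon_{\gothS}$ restricted to $\pi^{-1}(\gothS_t)$ is an isomorphism for every $t$ in the dense set $T'$, so in particular its generic fibre dimension (over $T$) forces $\deg \epsilon_{\gothS} = 1$; since $\epsilon_{\gothS}$ is proper and $\gothX$ is smooth (irreducible), a proper bimeromorphic morphism that is generically one-to-one onto a smooth space is bimeromorphic, and we conclude $\epsilon_{\gothS}$ is bimeromorphic. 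Restricting to the central fibre gives that $\epsilon\colon \gothU \to \gothX_0$ on the component $\gothB = \gothS_0$ through $[\gothL_0]$ is bimeromorphic. Then Lemma~\ref{lem:epsilon birational} immediately yields an almost holomorphic Lagrangian fibration on $\gothX_0$ with strong fibre $\gothL_0$, and the same argument applied with $0$ replaced by any $t \in T$ (the hypothesis and the construction are symmetric in $t$) gives the statement for all $t \in T$.

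I expect the main obstacle to be establishing that $\gothB(\gothX/T) \to T$ is \emph{flat} (equivalently, equidimensional with $n$-dimensional fibres near $[\gothL_0]$) and that the component $\gothS$ genuinely dominates $T$: a priori smoothness of the relative Barlet space at one point only controls it over an analytic neighbourhood, and one must rule out that $\gothS$ collapses into the central fibre or jumps in dimension along a proper analytic subset of $T$. This is handled by the trivial-normal-bundle computation (which works uniformly in $t$ because being a Lagrangian subtorus is a closed condition, as noted in the remark following Definition~\ref{def: defo}) together with Proposition~\ref{prop: folklore} for properness; the density of $T'$ then takes care of the degree-one statement. A minor secondary point is checking that the components $\gothS_t$ for $t \in T'$ really agree with the Barlet components of $f_t$ and not some other component — this follows from the uniqueness clause in Lemma~\ref{deflem}~\refenum{i}, so it is not a serious difficulty.
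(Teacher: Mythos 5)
Your overall strategy --- pass to the relative Barlet space, use smoothness along the section given by the tori, properness from Proposition~\ref{prop: folklore}, deduce bimeromorphy of the relative evaluation map from the dense set $T'$, and conclude via Lemma~\ref{lem:epsilon birational} --- is the paper's strategy. But there is a genuine gap at the point where you assert that the fibre over $t$ of your irreducible component of $\gothB(\gothX/T)$ ``is a component of $\gothB(\gothX_t)$ containing $[\gothL_t]$''. That fibre is only known to \emph{contain} the unique component $\gothB_t\subset\gothB(\gothX_t)$ through $[\gothL_t]$; a priori it can be a union of several irreducible components of $\gothB(\gothX_t)$, and even the generic fibre of the component over $T$ can be reducible, because monodromy may permute components. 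The uniqueness clause of Lemma~\ref{deflem}~\refenum{i} is a statement about the absolute Barlet space of one fibre and does not give this identification. The paper has to work for it: it invokes \cite[\S 3, Prop.~2]{fujiki82} to obtain irreducibility of the fibres after a finite base change $\widetilde T\to T$, and then uses the section (which lies in the smooth locus) to show that this base change is trivial --- and even then it only gets irreducible fibres over a Zariski--open subset $V\subset T$, so the degree--one argument has to be run over $T'\cap V$. Without this step your claim that the evaluation map is an isomorphism over every $t\in T'$, and hence of degree one, is unsupported: extra components in the fibre could raise the degree, and $T'$ need not meet any prescribed Zariski--open set unless you argue it does (it does, being dense, but you must first produce $V$).

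The same issue resurfaces in your last step ``restricting to the central fibre''. Bimeromorphy of the relative evaluation map $\bar\epsilon$ does not formally restrict to bimeromorphy over a fibre: the reduced central fibre of the universal family may decompose as $\gothU_0\cup\bigcup_i\gothV_i$, where only $\gothU_0$ is the graph over $\gothB_0$, and one must exclude that the extra components dominate $\gothX_0$ and that $\epsilon_0=\bar\epsilon\restr{\gothU_0}$ has degree at least $2$. The paper does this by combining the generic finiteness of $\epsilon_0$ (Lemma~\ref{deflem}~\refenum{iii}) with the fact that $\bar\epsilon$ is proper and birational onto the \emph{smooth} space $\gothX$, so that its fibres over general points of the divisor $\gothX_0$ are connected, hence single points. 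A smaller inaccuracy: smoothness of the relative Douady/Barlet space at $[\gothL_0]$ is not a tangent--space count (the normal bundle of $\gothL_0$ in the total space $\gothX$ has rank $n+1$, and a dimension count alone never gives smoothness); it is the unobstructedness theorem for deformations of Lagrangian submanifolds in the relative setting, which the paper quotes from \cite[Prop.~2.1]{matsushita09} (see also \cite{voisin92}, \cite{LehnDiss}). By contrast, the points you single out as the main obstacles --- flatness of the map to $T$ and domination of $T$ --- are the easy ones: the section gives surjectivity, and a reduced irreducible space dominating a smooth one--dimensional disc is flat.
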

\begin{proof}
Recall that on a single pair $(X,L)$ there exists an almost holomorphic
Lagrangian fibration with strong  fibre $L$ if and only if the evaluation map in Diagram \eqref{diagram:Barlet} is bimeromorphic, cf.~Lemma~\ref{lem:epsilon birational}.
We will now exploit this in the relative setting. Let $\gothB (\gothX/T)$ be the relative Barlet space of $\gothX$ over $T$ and denote by $h_*\colon \gothB(\gothX/T)\to T$  the holomorphic map $Z\mapsto h(\mathrm{supp}(Z))$, cf.~\cite[Ch.~VIII, Thm~2.10]{SCVVII}.
\begin{custom}[Claim] The relative Barlet space $\gothB(\gothX/T)$ is smooth near $[\gothL_t]$ for all $t\in T$.\end{custom}
\begin{proof}[Proof of Claim]Since $T$ is a disc and hence does not contain compact positive--di\-men\-sio\-nal subvarieties, we obtain $\gothB(\gothX/T) = \gothB(\gothX)$ near $[\gothL_t]$. By Proposition~\ref{prop:DouadyBarlet} the space $\gothB(\gothX)$ is smooth at $[\gothL_t]$, if $\mathscr{D}(\gothX)_{\mathrm{red}}$ is smooth. Again using that $T$ is a disc, we have $ \mathscr{D}(\gothX)_{\mathrm{ red}}= \mathscr{D}(\gothX/T)_{\mathrm{ red}}$ near $[\gothL_t]$.
However, as $T$ is smooth, the relative Douady space $\mathscr{D}(\gothX/T)$ (and therefore its reduction) is smooth at $[\gothL_t]$ by \cite[Prop.~2.1]{matsushita09}, see also \cite[Prop~2.4]{voisin92} and \cite[Cor. VI.6.3]{LehnDiss}. This shows the claim.
\end{proof}
Note that the family $\gothL \to T$ induces a section $\sigma \colon T \hookrightarrow \gothB(\gothX /T)$ of $h_*$. As $\gothB(\gothX/T)$ is smooth along $\sigma(T) = \{[\gothL_t] \mid t\in T \}$, there exists a unique irreducible component $\gothB(\gothX/T)_0$ of $\gothB(\gothX/T)$ containing $ \sigma(T)$. Since for any $t \in T$, the Barlet space $\gothB(\gothX_t)$ of the fibre is smooth at $[\gothL_t]$ by Lemma~\ref{deflem}, the (reduction of the) fibre of $h_*$ over $t$ contains the unique irreducible component $\gothB_t$ of $\gothB(\gothX_t)$ containing $[\gothL_t]$. We set $g:= h_*\restr{\gothB(\gothX/T)_0}$. Possibly covering $T$ by several smaller discs and treating these separately we may assume $g\colon \gothB(\gothX/T)_0\to T$ to be proper by Proposition~\ref{prop: folklore}.

The evaluation map from the restriction of the graph of the universal family $\gothU_T$ to $\gothB(\gothX/T)_0$ gives a commutative diagram
\[\xymatrix{ &\gothU_T\ar[dr]^{\bar\epsilon}\ar[dl]_{\bar\pi}\ar[dd]^{g'}\\
\gothB(\gothX/T)_0 \ar[dr]_g&& \gothX\ar[dl]^h\\
&T.}
\]
As $g$ admits a section, it is surjective.
Note that for all $t\in T$ the fibre $(\gothB(\gothX/T)_0)_t$ set--theoretically coincides with the union of some components of the cycle space $\gothB(\gothX_t)$; one of these components is equal to $\gothB_t$. Additionally, the reduced fibre $g'^{-1}(t)_{\rm red}$ coincides with the graph of the universal family over these components, and the restriction of $\bar \epsilon$ to $g'^{-1}(t)_{\rm red}$ is the (absolute) evaluation map.

Since  $g$ is a surjective and proper map from an irreducible space onto a (smooth) 1--dimensional disc, it is flat by \cite[Ch.~II, Thm.~2.9]{SCVVII}.
By \cite[\S 3, Prop.~2]{fujiki82} there  is a finite covering $\beta\colon\widetilde{T}\to T$, a reduced and irreducible closed subspace $\widetilde{\gothB}$ of $\gothB(\gothX/T)_0\times_T\widetilde{T}$ such that the induced map $\alpha\colon\widetilde{\gothB}\to\gothB(\gothX/T)_0$ is bimeromorphic, and a non--empty Zariski--open subset $V$ of $\widetilde{T}$ such that $\widetilde{g}\colon\widetilde{\gothB} \to \widetilde{T}$ has irreducible fibres over $V$. The proof of \cite[\S 3, Prop.~2]{fujiki82} shows that we can choose $\alpha$ to be an isomorphism over the normal locus of $\gothB(\gothX/T)_0$. As the section $\sigma\colon T\to\gothB(\gothX/T)_0$ maps to the smooth locus of $\gothB(\gothX/T)_0$ by the Claim proven above, it lifts to  a section $\widetilde{\sigma}\colon T\to\widetilde{\gothB}$ of $\beta \circ \widetilde g $. Composing with $\widetilde g$ we obtain a section of $\beta$. As $\beta$ is finite and $\widetilde{T}$ is irreducible, it follows that $\beta$ is an isomorphism. Summing up, we find a non--empty Zariski--open subset $V$ of $T$ such that $g$ has irreducible fibres over $V$. Consequently, for $t \in V$ the reduced fibre $g^{-1}(t)_{\rm red} $ coincides with the irreducible component $\gothB_t$ of $\gothB(\gothX_t)$ containing $[\gothL_t]$ and the reduced fibre ${g'}^{-1}(t)_{\rm red}$ is the graph of the universal family over this component.

As a consequence, the evaluation map $\epsilon_t\colon {g'}^{-1}(t)_{\rm red} \to \gothX_t$ is generically finite for all $t \in V$. From this we infer that $\bar \epsilon\restr{\gothU_V}$ is likewise generically finite. For $t\in T'\cap V$ we know that $\bar\epsilon_t \colon \gothU_t \to \gothX_t$ is an isomorphism by Lemma~\ref{Chowiso}. Hence, testing its degree at a general point in a fibre over some general $t\in T' \cap V$ we see that $\bar \epsilon\restr{\gothU_V}$ is in fact bimeromorphic. Thus, also $\bar\epsilon$ is bimeromorphic. Since the property of $\bar\epsilon_t$ being an isomorphism is open in the base $T$, after shrinking $T$ if necessary we may assume that $\gothX_t$ admits a Lagrangian fibration with fibre $\gothL_t$ for $t\neq 0$. Thus, $\bar\epsilon\colon \gothU_T\to \gothX$ is birational and an isomorphism on $\inverse{g'} (T\setminus\{0\})$.

If $\bar\epsilon$ is an isomorphism, the claim follows from Lemma~\ref{Chowiso}; so assume that this is not the case. We decompose the central fibre  into irreducible components $\inverse{g'}(0)_\mathrm{red}=\gothU_0 \cup \bigcup_i \gothV_i$, where $\gothU_0$ is the graph over $\gothB_0$,  the  component of the Barlet space of $\gothX_0$ containing $[\gothL_0]$. Restricting $\bar\epsilon$ to $\gothU_0$, we recover the absolute evaluation map for $\gothX_0$ as considered in Diagram \eqref{diagram:Barlet}, which is generically finite by Lemma~\ref{deflem}.

Since the global map $\bar\epsilon$ is proper birational, and $\gothX$ is smooth, we conclude that $\epsilon_0$ is a proper generically finite map of degree 1 onto the normal space $\gothX_0$, thus bimeromorphic.
Using Lemma~\ref{lem:epsilon birational} we conclude that $\gothX_0$ admits an almost holomorphic Lagrangian fibration with strong fibre $\gothL_0$. This completes the proof of Lemma~\ref{lem:closed}.
\end{proof}

\begin{proof}[Proof of Theorem~\ref{notstabproj}]
To show $\refenum{ii}\implies \refenum{i}$, let us assume that there is a small deformation $(\gothX_t, \gothL_t)_{t\in T}$ of the pair $(X, L) = (\gothX_0, \gothL_0)$ over a disc $T$ such that not all $\gothX_t$ are projective. We have seen in  Remark~\ref{rem:projective deformations} that there exists a dense subset $T'$ of $T$ such that the fibre $\gothX_t$ is non--projective for all $t\in T'$.

For all $t\in T'$ the non--projective hyperk\"ahler manifold $\gothX_t$ admits a holomorphic Lagrangian fibration with fibre $\gothL_t$ by Theorem~\ref{mainnonproj}. Thus, the family satisfies the assumptions of Lemma~\ref{lem:closed}, and we conclude that $X$ admits an almost holomorphic Lagrangian fibration with strong fibre $L$.

For $\refenum{i}\implies \refenum{iii}$, assume that $f\colon X\dasharrow B$ is an almost holomorphic Lagrangian fibration with strong fibre $L$ on the projective hyperk\"ahler manifold $X$. After a suitable modification, we may assume that $B$ is also projective. Then, the pullback $D = f^*(A)$  of a general ample divisor $A$ on $B$, defined as in Remark~\ref{rem: pullback}, is trivial on $L$.


Finally, assuming \refenum{iii} suppose that every small deformation of $(X, L)$ remains projective. Choose an ample divisor $A$ as in Lemma~\ref{lemma:char stab proj}~\refenum{ii}. Since $D$ restricts to zero on $L$, we have $D\in K \subset A^\perp$. However, if $A'$ is any ample divisor and $D'$ is an arbitrary divisor on $X$, then the Beauville--Bogomolov form $q(A',D')$ computes the intersection number $D'.(A')^{n-1}$ up to a non--trivial factor, cf.~\cite[Exerc.~23.2]{HuybrechtsHKM}. Consequently, we obtain $q(A, D) \neq 0$, which contradicts $D \in A^\perp$.
\end{proof}

\section{Holomorphic models for almost holomorphic Lagrangian fibrations}\label{sect:holmodels}
The aim of this section is to prove that an almost holomorphic Lagrangian fibration can be modified to give a holomorphic Lagrangian fibration on a possibly different hyperk\"ahler manifold. This proves a special case of the Hyperk\"ahler SYZ--Conjecture.

\subsection{Almost holomorphic fibrations and the SYZ--Conjecture}\label{sect:SYZ}
We have seen in Corollary~\ref{cor: almost holomorphic non projective} that the existence of an almost--holomorphic Lagrangian fibration on a non--projective hyperk\"ahler manifold implies the existence of
a holomorphic Lagrangian fibration. If $X$ is projective, the question of passing from ``almost--holomorphic'' to ``holomorphic''
is connected to a circle of well--known conjectures\footnote{See for example \cite{verbitsky10} and the references given therein.}:

\begin{custom}[Hyperk\"ahler SYZ--Conjecture] Let $X$ be a hyperk\"ahler manifold.
\begin{enumerate}
 \item If  $\kl$ is a non--trivial  line bundle   on $X$ such that $q(\kl)=0$ and $\kl$ is nef, then $\kl$ is semi-ample and a  suitable power of it gives rise to a holomorphic Lagrangian fibration on $X$.
\item If $\kl$ is a non--trivial  line bundle on $X$ such that $q(\kl)=0$, then there is a hyperk\"ahler manifold $X'$ and a bimeromorphic map $\phi\colon X'\dashrightarrow X$ such that $\phi^*(\kl)$ is nef. The map $\phi$ is expected to be a composition of Mukai flops.
\end{enumerate}
\end{custom}
This conjecture relates to the simpler version mentioned in the introduction in the following way: using the description of the universal deformation space via the period map one can deform any hyperk\"ahler manifold of second Betti number at least 5 to a hyperk\"ahler manifold that admits a non--trivial nef line bundle  with $q(\kl)=0$ \cite[Prop.~4.3]{sawon03}. Currently, there are no examples of hyperk\"ahler manifolds known with smaller second Betti number.
The claims of the Hyperk\"ahler SYZ---Conjecture have been established under a variety of additional assumptions, see for example \cite{matsushita08, ame-cam08, verbitsky10, cop}.

The following result shows the relation to almost holomorphic La\-gran\-gian fibrations.
\begin{prop}\label{prop:qnef}
Assume that $f\colon X\dashrightarrow B$ is an almost holomorphic Lagrangian fibration with $X$ and $B$ projective. Let $A$ be a general very ample divisor on $B$ and $D:=f^*(A)$, defined as in Remark~\ref{rem: pullback}. Then, the following holds:
 \begin{enumerate}
 \item  $q(D)=0$,
 \item if  $D$ is nef, then $\ko_X(D)$ is semi-ample and induces a holomorphic Lagrangian fibration $f'\colon X \to B'$.
 \end{enumerate}
\end{prop}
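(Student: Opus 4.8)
The plan is to prove the two assertions separately, in each case playing off the Beauville--Bogomolov form $q$ against the Iitaka dimension of $D$ and using the Fujiki relation. The elementary starting point, needed for both parts, is that $D=f^*A$ has Iitaka dimension exactly $\dim B=n$. Indeed, $|A|$ is base-point-free and $f$ is almost holomorphic, so the base locus of $f^*|A|$ is contained in the indeterminacy locus of $f$, which has codimension $\geq 2$ in $X$; hence $f^*|A|$ has no fixed component, $D$ is a movable effective class, and $D$ is not big because on a resolution of indeterminacies $p\colon\widetilde X\to X$ with $\widetilde f=f\circ p\colon\widetilde X\to B$ holomorphic one has $H^0(X,mD)=H^0(\widetilde X,\widetilde f^*mA)=H^0(B,mA)$ by the projection formula, whence $\kappa(X,D)=\kappa(B,A)=n<2n=\dim X$.

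For part \refenum{i} I would then invoke two standard facts about $q$ on a projective hyperk\"ahler manifold. First, an effective divisor of negative Beauville--Bogomolov square has Iitaka dimension $0$ (such a class equals its own negative part in the divisorial Zariski decomposition; see \cite{huybrechts99, HuybrechtsHKM}); since $\kappa(D)=n>0$, this rules out $q(D)<0$. Second, a pseudoeffective class of positive square is big (the positive cone lies in the big cone, \cite{huybrechts99}); since $D$ is effective but not big, this rules out $q(D)>0$. Hence $q(D)=0$.

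For part \refenum{ii} I would argue as follows once $D$ is assumed nef. By part \refenum{i} and the Fujiki relation for mixed products --- a consequence of Verbitsky's description of the subalgebra of $H^{*}(X,\mathbb{C})$ generated by $H^{2}$, see \cite[Ch.~23]{HuybrechtsHKM} --- the class $D^{n+1}$ is numerically trivial: in a top-degree product of $D^{n+1}$ with $n-1$ further classes, any perfect matching pairs two copies of $D$ and so produces a factor $q(D)=0$. Since the numerical dimension $\nu(D)$ satisfies $\kappa(D)\leq\nu(D)$, this gives $\nu(D)=\kappa(D)=n$. Now $X$ is a smooth projective variety with $K_X=\ko_X$, and a nef divisor with $\kappa=\nu$ on such a variety is semi-ample --- this is the ``$\kappa=\nu$'' case of abundance, available through the minimal-model results \cite{BCHM, HaconXu} employed in Section~\ref{sect:holmodels}; alternatively one transports nefness to the holomorphic model of $f$ produced by Lemma~\ref{lem:almosthol}, on which $D$ pulls back to a nef divisor that is numerically trivial on the Lagrangian fibres and therefore descends to the base. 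Thus $\ko_X(D)$ is semi-ample, the morphism $f'\colon X\to B':=\mathrm{Proj}\bigoplus_{m}H^{0}(X,mD)$ it defines has connected fibres onto a normal projective variety with $0<\dim B'=\kappa(D)=n<\dim X$, and Matsushita's Theorem~\ref{thm: matsushita} identifies $f'$ as a Lagrangian fibration; by construction $\ko_X(D)$ is a positive multiple of the pullback of an ample class from $B'$.

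The part I expect to carry the real weight is \refenum{i}: everything comes down to locating $D$ correctly in the cone picture, i.e.\ to the two structural properties of $q$ on (pseudo-)effective divisors together with the identity $\kappa(D)=\dim B$ --- and it is in establishing the latter that almost-holomorphy of $f$ is genuinely used, to ensure that $f^{*}|A|$ has no fixed component (without this, the naive pullback can fail to behave well, and $q(D)$ need not vanish). For \refenum{ii} the only non-formal ingredient is the semi-ampleness of nef divisors with $\kappa=\nu$; once that is granted, the identification of $f'$ as a Lagrangian fibration is immediate from Matsushita's theorem.
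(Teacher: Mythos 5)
Your part \refenum{i} is, in outline, the paper's own argument (which follows \cite[Prop.~3.1]{ame-cam08}): $q(D)\geq 0$ because $D$ moves without fixed component, and $q(D)>0$ is excluded because it would force $D$ to be big. However, the auxiliary fact you invoke to rule out $q(D)<0$ is false as stated: it is not true that an effective divisor of negative Beauville--Bogomolov square has Iitaka dimension $0$, nor that such a class equals the negative part of its divisorial Zariski decomposition. On a K3 surface take a $(-2)$-curve $E$ and an ample divisor $A'$ with $A'^2=2$ and $E\cdot A'=1$; then $5E+A'$ is effective and big, yet has square $-38$. The statement that is actually true (and is what the paper uses) is that $q$ is nonnegative on effective divisors \emph{without fixed component}: taking a general member $D'$ of the moving linear system, $q(D)=q(D,D')$ is computed, up to a positive constant, by $\int_X(\sigma\bar\sigma)^{n-1}\wedge D\wedge D'\geq 0$ since $D$ and $D'$ share no component. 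Since you have already checked that $f^*|A|$ has no fixed component (its base locus lies over the indeterminacy locus, of codimension $\geq 2$), the repair is immediate, but as written the step excluding $q(D)<0$ rests on an incorrect lemma. A smaller caveat: $H^0(X,mD)=H^0(B,mA)$ is not a direct projection-formula identity, because $p^*(mD)$ and $\widetilde f^*(mA)$ differ by an effective $p$-exceptional divisor when $f$ is only almost holomorphic; for your purposes it is enough that $D$ is disjoint from a general strong fibre $L$, so $\ko_X(mD)|_L$ is trivial and $D$ cannot be big, while $\kappa(X,D)\geq\kappa(B,A)=n$ is clear.

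For part \refenum{ii} you take a genuinely different route from the paper, and it is viable. The paper identifies $f$ with the nef reduction of $\ko_X(D)$ in the sense of \cite{bcekprsw}, so the nef dimension is $n<\dim X$, and concludes by \cite[Thm.~1.5]{matsushita08}. You instead use $q(D)=0$ and the polarised Fujiki relation to get $D^{n+1}\cdot H^{n-1}=0$, hence $\nu(D)\leq n$, and combine with $\kappa(D)\geq n$ and $\kappa\leq\nu$ to obtain $\kappa(D)=\nu(D)=n$; then semi-ampleness follows because $D$ is nef and abundant and $K_X$ is trivial, and Theorem~\ref{thm: matsushita} identifies the resulting morphism as a Lagrangian fibration. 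This is correct, but the attribution is off: the ``nef and abundant implies semi-ample'' statement is Kawamata's theorem on nef and abundant (adjoint) divisors, applied here with $aD-K_X=aD$, not a consequence of \cite{BCHM} or \cite{HaconXu}; and your alternative sketch via Lemma~\ref{lem:almosthol} is too vague to substitute for it. With the correct references, your \refenum{ii} buys a self-contained argument avoiding the nef-reduction machinery, at the price of invoking Kawamata's abundance-type theorem where the paper quotes Matsushita's result tailored to hyperk\"ahler manifolds.
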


\begin{proof}[Proof of Proposition~\ref{prop:qnef}]
The fact that $q(D)=0$ is proved in \cite[Proposition 3.1]{ame-cam08}. The idea is that $q(D)\geq 0$, since $D$ has no fixed component. Furthermore, if $q(D)>0$ then it would be big (see \cite[proof of Prop.~26.13]{HuybrechtsHKM} and use for example \cite[Prop.~6.6 (f)]{DemaillyICTP}). This is impossible, since $D$ induces a fibration onto a lower--dimensional space.

If in addition $D$ is nef, it is straightforward to see that $f$ coincides with the nef reduction of the line bundle $\ko_X(D)$, defined in \cite[Theorem 2.1 and Definition 2.7]{bcekprsw}. Thus, the nef dimension of $f^*(A)$ is $\dim X/2< \dim X$, and we conclude by \cite[Theorem 1.5]{matsushita08}.
\end{proof}

\subsection{Good minimal models for almost holomorphic Lagrangian fibrations}
Consider again an almost holomorphic Lagrangian fibration $f\colon X \dasharrow B$ on a hyperk\"ahler manifold $X$. Assume that the pull--back $D=f^*(A)$ of a general very ample divisor
 on $B$ is not nef, so Proposition \ref{prop:qnef} does not apply. Since $K_X$ is trivial, $D$ is an adjoint divisor. Consequently, it is a natural idea to run a minimal model program (see \cite{Kollar-Mori} for an introduction and the standard terminology) in order to make $D$ nef. Proceeding in this way we will prove the following result.

\begin{theo}\label{thm: holomorphic model}
 Let $X$ be a projective hyperk\"ahler manifold with an almost holomorphic Lagrangian fibration $f\colon X \dasharrow B$ over a normal projective variety $B$. Let $A$ be a general very ample divisor on $B$ and $D = f^*(A)$. Then, there exists a hyperk\"ahler manifold $X'$ and a birational map $\phi\colon X \dasharrow X'$ with the following properties
 \begin{enumerate}
\item $\phi$ is the composition of a finite sequence of $K_X$--flops; in particular, it is an isomorphism in codimension one,
 \item no center of $\phi$ intersects any general fibre of $f$,
 \item $\phi_*(D)$ is nef, and the linear system $|\phi_*(D)|$ defines a holomorphic Lagrangian fibration $f'\colon X' \to B'$, where $B'$ is a normal projective variety birational to $B$.
 \end{enumerate}
 \end{theo}
Although it is our aim to construct a \emph{smooth} model $X'$ for $X$, intermediate steps in the construction might introduce singularities. We therefore recall the following terminology from \cite[Def 1.1]{beauville00}.
\begin{defin}[\textbf{Beauville}]
 A normal projective variety (or compact K\"ahler space) $Z$ is called a \emph{symplectic variety}, if there is a symplectic form $\sigma$ on $Z_\sm$ that extends to a holomorphic 2--form on some resolution of $Z$.
\end{defin}
As noted in \cite[(1.2)]{beauville00}, the form $\sigma$ extends to one resolution if and only if it extends to every resolution.
The proof of Theorem \ref{thm: holomorphic model} is based on recent advances in the minimal model program. We will start by proving that the final outcome is smooth by a slight generalisation of a result of Namikawa.
\begin{prop}\label{prop: general namikawa}
 Let $X$ be a hyperk\"ahler manifold and $X'$ a symplectic variety birational to $X$. If $X'$ has $\IQ$--factorial terminal singularities, then $X'$ is a smooth hyperk\"ahler manifold. Furthermore, $X$ and $X'$ are birational via a finite number of $K_X$--flops.
\end{prop}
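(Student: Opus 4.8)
The plan is to combine the minimal model program with Namikawa's deformation theory of $\IQ$--factorial terminal symplectic varieties. The first, easy, observation is that $\omega_{X'}$ is trivial: a symplectic variety has rational Gorenstein singularities, so $\omega_{X'}$ is a line bundle, and the top exterior power $\sigma^n$ of the symplectic form on $X'_{\sm}$ extends to a global section of $\omega_{X'}$ whose zero divisor, being supported on $\mathrm{Sing}(X')$ and hence of codimension $\geq 2$, is empty. Thus $\omega_{X'}\isom\ko_{X'}$, and of course $\omega_X\isom\ko_X$; since $X$ is smooth, $X$ and $X'$ are two birational minimal models with $\IQ$--factorial terminal singularities and numerically trivial canonical class.

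Granting this, the last assertion of the proposition --- and, as a by--product, the fact that $\phi\colon X\dasharrow X'$ is an isomorphism in codimension one --- follows from Kawamata's theorem that birational minimal models with $\IQ$--factorial terminal singularities are connected by a finite chain of flops $X=X_0\dasharrow X_1\dasharrow\cdots\dasharrow X_k=X'$; in the projective case this is available through \cite{BCHM} (see also \cite{Kollar-Mori}), the machinery already used in this section. Since every $X_i$ again has $K_{X_i}\equiv 0$, none of these flops changes the canonical class, i.e., each is a $K_X$--flop. (The statement is phrased for possibly non--projective $X$, but it will be applied in Theorem~\ref{thm: holomorphic model} with $X$ projective; in general one substitutes the K\"ahler counterparts of the cited results.)

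The heart of the matter is to show that $X'$ is smooth, and here I would imitate Namikawa. Since $\phi$ is an isomorphism in codimension one, it induces an isometry $H^2\bigl(X,\IZ\bigr)\isom H^2\bigl(X',\IZ\bigr)$ respecting the Hodge structures and the Beauville--Bogomolov forms, so that the two period domains are canonically identified. By Namikawa, the Kuranishi space $\mathrm{Def}(X')$ is smooth and its local period map is an open immersion into this period domain, just as for the hyperk\"ahler manifold $X$; hence over a common small polydisc $V$ one obtains smooth families $h\colon\gothX\to V$, $h'\colon\gothX'\to V$ with $\gothX_0=X$, $\gothX'_0=X'$ and $\gothX_t$, $\gothX'_t$ sharing the same period for all $t\in V$. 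For very general $t\in V$ the hyperk\"ahler manifold $\gothX_t$ satisfies $\NS(\gothX_t)=0$ and therefore admits no flopping contraction; since the chain of flops between $X$ and $X'$ spreads out over $V$ to a bimeromorphic map $\gothX_t\dasharrow\gothX'_t$ that is an isomorphism in codimension one, this forces $\gothX'_t\isom\gothX_t$, so $\gothX'_t$ is smooth. On the other hand, a $\IQ$--factorial terminal symplectic variety has no smoothing (Namikawa, Kaledin); applied to $h'$, a singular $X'=\gothX'_0$ would have singular general fibre $\gothX'_t$, a contradiction. Therefore $X'$ is smooth, and being isomorphic in codimension one to $X$ it is simply connected with $h^0\bigl(\Omega^2_{X'}\bigr)=1$, hence a hyperk\"ahler manifold.

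I expect the smoothness step to be the main obstacle, and within it the delicate point to be the assertion that the chain of flops connecting $X$ and $X'$ propagates across the common deformation $V$, so that $\gothX'_t$ is genuinely bimeromorphic to $\gothX_t$; the other ingredients (unobstructedness of $\mathrm{Def}(X')$ and local Torelli for $\IQ$--factorial terminal symplectic varieties, and the non--existence of smoothings) are Namikawa's, to be quoted or mildly adapted, while the reduction carried out in the first two paragraphs is routine. If $X$ is not projective one must in addition supply the K\"ahler analogues of the flop theorem and of these deformation results.
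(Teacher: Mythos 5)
Your proposal is correct and follows essentially the same route as the paper: triviality of $\omega_{X'}$ from the symplectic form, Kawamata's theorem (\cite{kawamata08}) that birational $\IQ$--factorial terminal minimal models are connected by flops, and Namikawa's deformation theory for the smoothness of $X'$. The paper simply quotes the Example on p.~98 of \cite{namikawa06} for the entire smoothness step, so the ``delicate point'' you flag --- propagating the birational map to an isomorphism of the very general deformations and combining this with the local triviality of deformations of $\IQ$--factorial terminal symplectic varieties --- is precisely what that cited result supplies.
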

\begin{proof}
If $\sigma'$ is a symplectic form on $X'_\sm$ then $\sigma'^{\dim X/2}$ is a nowhere vanishing generator of the canonical bundle $\omega_{X'_\sm}$, so $K_{X'}$ is trivial and in particular nef.
Therefore, both  $X$ and $X'$ are minimal models of a common desingularisation, and thus they are connected by a finite number of flops by \cite{kawamata08}. The claim then follows from \cite[p.~98, Example]{namikawa06}.
\end{proof}

Theorem \ref{thm: holomorphic model} now follows immediately from  Theorem \ref{thm: matsushita} and the following Lemma whose proof will occupy the rest of this section.

\begin{lem}[Holomorphic Model Lemma]\label{lem:mmp}
Let $X$ be a projective hyperk\"ahler manifold, and let $f\colon X \dasharrow B$ be a dominant almost holomorphic map with connected fibres to a complex space $B$ with $\dim B < \dim X$. Then there exists a holomorphic model for $f$ on a hyperk\"ahler manifold $X'$ birational to $X$; that is, there exists a diagram
\[ \xymatrix{X\ar@{-->}[r]^{\phi}\ar@{-->}[d]_f& X'\ar[d]^{f'}\\B\ar@{-->}[r] & B'}\]
such that $B'$ is a normal projective variety bimeromorphic to $B$, $f'$ is holomorphic, and $X'$ is hyperk\"ahler manifold. The map $\phi$ is an isomorphism near the general fibre of $f$.

Moreover, if $D:= f^*(A)$ is the pull--back of a general very ample divisor on $B$, then $\phi_*(D)$ is nef, and $f'$ can be chosen to be
the map associated to the linear system of $\phi_*(D)$.
\end{lem}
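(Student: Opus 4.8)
The plan is to run a minimal model program that makes the divisor $D=f^*(A)$ nef, to check that the outcome stays smooth and hyperk\"ahler, and then to recognise the result as a holomorphic fibration via Matsushita's nef--reduction theorem. First I would reduce to the case where $B$ is a normal projective variety: a resolution of indeterminacy $\widetilde X\to X$ is projective, hence its image under the induced holomorphic map is Moishezon, so after replacing $B$ by a projective modification and lifting through a further resolution we may assume $B$ normal projective and $0<\dim B<\dim X$. Fix a sufficiently divisible general very ample divisor $A$ on $B$ and set $D=f^*(A)$ as in Remark~\ref{rem: pullback}. Being the push--forward, under the resolution $p\colon\widetilde X\to X$, of the globally generated divisor $\widetilde f^*A$, the divisor $D$ is movable, hence effective with no fixed component; and by \cite[Prop.~3.1]{ame-cam08} (cf.\ Proposition~\ref{prop:qnef}) one has $q(D)=0$, so $D$ is not big.

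Since $K_X\cong\ko_X$, for $0<\epsilon\ll 1$ the pair $(X,\epsilon D)$ is terminal and $\IQ$--factorial and $K_X+\epsilon D\equiv\epsilon D$ is effective, so a $(K_X+\epsilon D)$--MMP is literally a ``$D$--MMP''. I would first observe that no step can be a divisorial contraction: such a step would contract a prime divisor swept out by $D$--negative curves, which forces that divisor into the base locus of the pushed--forward system $|D_i|$; but at each stage the preceding steps are isomorphisms in codimension one, so $|D_i|$ inherits from $|D|$ the absence of a fixed component, a contradiction. Hence every step is a flip, which is a $K_X$--flop, and $\phi\colon X\dasharrow X'$ is an isomorphism in codimension one. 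That this MMP, run with scaling of an ample divisor, terminates with a model on which $D':=\phi_*(D)$ is nef is the technical heart of the argument; since $\epsilon D$ is not big this is not provided by \cite{BCHM} alone, and I would deduce it by combining \cite{BCHM} with the existence results for minimal models of $\IQ$--effective log canonical classes in \cite{HaconXu}, together with the fact that the intermediate spaces lie among the finitely many birational hyperk\"ahler models of $X$, so that the scaling threshold cannot strictly decrease infinitely often (cf.\ also \cite{kawamata08}). The output $X'$ is $\IQ$--factorial with terminal singularities, and transporting $\sigma$ through $\phi$ and extending it over the codimension--$\ge 2$ flopping loci and to a common resolution exhibits $X'$ as a symplectic variety; by Proposition~\ref{prop: general namikawa} it is then a \emph{smooth} hyperk\"ahler manifold, birational to $X$ through $K_X$--flops.

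It remains to produce the fibration. Because $D$ is pulled back from $B$ it is trivial on every general fibre of $f$, and these fibres sweep out a dense open locus of $X$ on which $D$ is nef; hence the flip loci avoid that locus and $\phi$ is an isomorphism near the general fibre of $f$. Consequently $f':=f\circ\phi^{-1}\colon X'\dasharrow B$ is again an almost holomorphic map with connected fibres satisfying $(f')^*(A)=D'$, so $D'$ is numerically trivial on the general fibre of $f'$; therefore the nef reduction of $D'$ factors birationally through $f'$ and its nef dimension is at most $\dim B<\dim X'$. By \cite[Thm.~1.5]{matsushita08} (used exactly as in the proof of Proposition~\ref{prop:qnef}) the divisor $D'$ is then semiample; having chosen $A$ divisible enough, $|D'|$ is already base point free and defines a holomorphic map with connected fibres $f'\colon X'\to B'$ onto a normal projective variety $B'$. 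Since over a general point the fibre of this map is the general fibre of $f$ transported by $\phi$, the map $B\dasharrow B'$ is birational and the square commutes; this gives the desired holomorphic model with $\phi_*(D)$ nef and $f'$ the map associated to $|\phi_*(D)|$, and feeding $f'$ into Theorem~\ref{thm: matsushita} identifies it as a Lagrangian fibration.

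The hard part will be the existence and termination of the MMP that makes $\phi_*(D)$ nef: because the boundary $\epsilon D$ is not big this lies beyond \cite{BCHM} and must be combined with the refined existence theory of \cite{HaconXu} and with finiteness of birational hyperk\"ahler models. A related, more conceptual subtlety is that a nef line bundle with $q=0$ on a hyperk\"ahler manifold is \emph{not} known to be semiample in general---this is precisely part of the Hyperk\"ahler SYZ--conjecture---so one genuinely cannot avoid exploiting the fibration structure inherited from $f$ when invoking Matsushita's criterion at the very end.
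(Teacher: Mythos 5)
Your overall strategy --- run an MMP to make $D$ nef, use Proposition~\ref{prop: general namikawa} to recognise the outcome as a smooth hyperk\"ahler manifold connected to $X$ by flops, then extract the fibration --- is close in spirit to the paper, and your exclusion of divisorial contractions (no fixed component of $|D|$) as well as the final appeal to Matsushita's nef--reduction theorem are reasonable. However, the step you yourself call the technical heart, namely termination of the absolute $(K_X+\epsilon D)$--MMP with scaling (equivalently, existence of a good minimal model for the klt pair with non--big boundary $\epsilon D$), is a genuine gap as you have set it up. The Hacon--Xu result you invoke does not apply in your absolute setting: its hypothesis is the existence of a good minimal model \emph{over an open subset of the base of a projective morphism}, i.e.\ it requires a relative structure that you do not have, because $f$ is only almost holomorphic and you never turn it into a morphism. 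Your fallback --- that the intermediate models lie among ``the finitely many birational hyperk\"ahler models of $X$'', so the scaling threshold cannot drop infinitely often --- rests on a finiteness statement that is not available in the cited literature, and even granted it, the sketched threshold argument is not a complete termination proof: termination of the MMP with scaling is usually deduced from finiteness of models in a polytope of \emph{big} boundaries, which is exactly what fails here.

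The paper resolves precisely this difficulty by working relatively over $B$. It takes a log resolution $p\colon \widetilde X\to X$ of the linear system of a suitable multiple of $D$, on which the moving part is base point free, so that the induced map $\widetilde f\colon \widetilde X\to B$ is a morphism. Over the open set $U\subset B$ where $f$ is proper holomorphic, the pulled--back pair is already a good minimal model over $U$; hence by \cite[Thm.~1.1]{HaconXu} it has a good minimal model over $B$, and by \cite[Prop.~2.5]{lai11} any MMP over $B$ with scaling terminates in one. Lai's degenerate--divisor argument (the relative analogue of your ``no divisorial contraction'' step) then shows that all $p$--exceptional divisors are contracted, so the output $\hat X$ is isomorphic to $X$ in codimension one, terminal, symplectic, hence smooth hyperk\"ahler by Proposition~\ref{prop: general namikawa}, and the transform of $D$ is literally $\hat f^*(A)$ for the \emph{morphism} $\hat f\colon \hat X\to B$, so it is base point free by construction; in particular, contrary to your closing remark, the paper never needs Matsushita's semiampleness criterion from Proposition~\ref{prop:qnef} at this stage. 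If you wish to salvage your absolute approach, the cleanest repair is exactly this detour: resolve $f$ to a morphism first and run the MMP over $B$.
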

\begin{rem}
A slightly more general result has been known to be true in dimension four due to work of Amerik and Campana \cite[Thm.~3.6]{ame-cam08}. Compare also with \cite{wierzba}.
\end{rem}
\subsection{Proof of the Holomorphic Model Lemma}
Let $X$ be a projective hyperk\"ahler manifold, and let $f\colon X \dasharrow B$ be a dominant almost holomorphic map with connected fibres to a complex space $B$ with $\dim B < \dim X$. We may assume that $B$ is projective and denote by $D = f^*(A)$ the pullback of a  general very ample divisor $A$ on $B$.

\subsubsection{Setting the stage}
 Choose a small, positive, rational number $\delta$ such that with $\Lambda = \delta D$ the pair $(X, \Lambda)$ is klt. This is possible since $X$ is smooth and $D$ is effective. Note that $K_X+\Lambda=\Lambda$, since $K_X$ is trivial.

 By \cite[Cor.~1.1.2]{BCHM} or \cite[Thm.~1.1]{cascini-lazic10} the adjoint ring
\[R:=R(X, K_X+\Lambda)=\bigoplus_{m\in \IN} H^0(X, \ko(\lfloor m\Lambda \rfloor))\]
is finitely generated, and we can therefore choose a positive number $d$ such that $d\Lambda$ is integral and such that the Veronese subring $R'=R^{(d)}=\bigoplus_{m\in \IN} R_{md}$ is generated in degree 1.

Since $md\Lambda$ is the pullback of a general very ample divisor on $B$, the general member of the linear system $|md\Lambda|$ is irreducible. It follows that the base locus of $|md\Lambda|$ is a subset of at least codimension 2, not intersecting the general fibre of $f$. As $R'$ is generated in degree 1, $\Bs(d\Lambda)=\Bs(md\Lambda)$ for all positive integers $m$.

We now choose a log--resolution $p\colon \widetilde X\to X$ of the linear series $|d\Lambda|$ (see e.g. \cite[p.144]{LazarsfeldII}) such that the exceptional divisor of $p$ maps to the base locus of $|d\Lambda|$. By the above there are divisors $M$ and $G$ on $\widetilde X$ such that for all $m\in \IN$
\begin{equation}\label{eq:linsys}
 p^*|md\Lambda|=|p^*(md\Lambda)|= |mM| +mG,
\end{equation}
$mM$ is basepoint free and $mG$ is the fixed part of $|p^*(md\Lambda)|$. By our choice of the resolution, we have $\mathrm{supp}(G)=\mathrm{Exc}(p)$.

Let $\widetilde f\colon \widetilde X\to \widetilde B$ be the Stein factorisation of the morphism associated to the linear system $|M|$. The composition $\widetilde f\circ \inverse p$ is still almost holomorphic so we may replace $B$ by $\widetilde B$ resulting in a diagram
\[\xymatrix{ & \widetilde X \ar[dl]_p \ar[dr]^{\widetilde f}\\ X \ar@{-->}[rr]^f && B.}\]

Let $E:=\sum_i E_i$ be the reduced exceptional divisor of $p$, and let $a_i:= \mathrm{discr}(E_i, X, \Lambda )$ be the discrepancies with respect to $\Lambda$. By definition and the klt--assumption the effective divisors
\begin{equation*}
\widetilde \Lambda := \inverse p_* \Lambda + \sum_{-1< a_i<0} -a_i E_i \text{\hspace{0.7cm} and \hspace{0.7cm}} F:=\sum_{ a_i>0} a_i E_i
\end{equation*}
do not have common components and satisfy
$ K_{\widetilde X} + \widetilde \Lambda = p^* (K_X+ \Lambda) +F$.
We have linear equivalences
\begin{equation}\label{eq:zerlegung}
d(K_{\widetilde X} + \widetilde\Lambda )\sim d p^* \Lambda+ dF
\sim  M+G+ dF,
\end{equation}
where $G+ dF$ is effective, and $\mathrm{supp}(G + dF) = E$ is
the stable base locus of  $K_{\widetilde X} + \widetilde\Lambda $ (cf.~\eqref{eq:linsys}).

\subsubsection{Running an MMP}
Let  $U\subset B$ be the largest open subset such that the restriction $f\restr{\inverse{f(U)}}\colon \inverse{f}(U) \to U$ is proper holomorphic and set $\widetilde U := \inverse{\widetilde f}(U)$.
Then, $p$ induces an isomorphism from $\widetilde U$ to $\inverse{f}(U)$, which implies in particular that $K_{\widetilde U}$ is trivial. Since furthermore the restriction of $\widetilde \Lambda$ to $\widetilde U$ coincides with the pull--back of $A \cap U$ via $\widetilde f|_{\widetilde U}$, the restriction of $K_{\widetilde U} + \widetilde \Lambda|_{\widetilde U}$ to the fibre $\widetilde f^{-1}(b)$ is nef for all $b\in U$. Consequently, $(\widetilde U, \widetilde \Lambda|_{\widetilde U})$ is a good minimal model over $U$ in the sense of \cite[Sect.~2]{HaconXu}.

Since moreover the pair $(\widetilde X, \widetilde \Lambda )$ is klt by definition, it  has a good minimal model over $B$ by a recent result of Hacon and Xu \cite[Thm.~1.1]{HaconXu}. Let $H$ be an ample divisor on $\widetilde X$. Then it follows from \cite[Prop.~2.5]{lai11} that any minimal model program over $B$ with scaling by $H$ (see for example \cite[Rem.~3.10.10]{BCHM}) terminates in a good minimal model $(\hat X, \hat \Lambda)$ for $(\widetilde X, \widetilde \Lambda)$ over $B$. We summarise the situation in the following diagram:
\[\begin{xymatrix}{&\ar[dl]_p \widetilde  X\ar[dd]^{\widetilde f} \ar@{-->}[rd]^{\psi}\\
X\ar@{-->}[dr]_f & &  \hat X\ar[ld]^{\hat f}\\
  &  B   & .
}
\end{xymatrix}
\]
\subsubsection{Analysing the outcome of the MMP}
Recall that the relative stable base locus of an effective $\mathbb{Q}$-divisor $D$ on $\hat X$ is defined as
\[
 \mathbb{B}(D/B) =  \bigcap_{D'\geq 0, \; D\sim_{\IQ,B} D'} \supp(D') 
\]
where $D \sim_{B,\IQ} D' :\iff D \sim_\IQ D' + \hat f^*(C)$ for some $\IQ$-Cartier divisor $C$ on $B$. The relative diminished stable base locus is defined as 
\[
 \mathbb{B}_-(D/B)=  \bigcup_{\varepsilon \in \mathbb{Q}^{>0} } \mathbb{B}(D+\varepsilon A/B),
\]
where $A$ is any ample $\mathbb{Q}$-divisor; see for example \cite[Sect.~2.E]{HaconKovacs}. We have $\mathbb{B}_-(D/B) \subset \mathbb{B}(D/B)$. 

\begin{custom}[Claim]
The pair $(\hat X, \hat \Lambda)$ is a good minimal model for $(\widetilde X,\widetilde\Lambda)$ and $\hat f$ is the map associated with the linear system of $\hat M := \psi_* M = d(K_{\hat X} + \hat \Lambda) = d \delta \hat f^* A$. In particular, the $p$-exceptional set $E$ is contracted by $\psi$.
\end{custom}
\begin{proof}
The non-trivial part of the proof is identical to the one of \cite[Thm.~4.4]{lai11}. We will give a sketch of the argument for the reader's convenience.

As $(\hat X, \hat \Lambda)$ is a good minimal model for $(\widetilde X,\widetilde\Lambda)$ over $B$, the relative stable base locus $\IB(K_{\hat X}+\hat \Lambda/B)$ is empty. In particular, the restricted base locus $\IB_-(K_{\hat X}+\hat \Lambda/B)$ is empty. Moreover, the pushforward $\hat G + d \hat F= \psi_*(G+dF)$ remains vertical, as $\psi$ is an isomorphism on $\widetilde U$, and we have $\IB(K_{\hat X}+\hat \Lambda)=\supp(\hat G + d \hat F)$. Thus, the argument of \cite[Thm.~4.4]{lai11} shows that every effective divisor $T$ on $\hat X$ with $\supp (T) \subset \supp(\hat G+d\hat F)$ is degenerate in the sense of \cite[Def. 2.9]{lai11}. Following Lai, we conclude by \cite[Lem.~2.10]{lai11} that $T$ is contained in $\IB_-(K_{\hat X}+\hat \Lambda/B)$, which shows that there exists no such component.
\end{proof}

Set $\hat \phi:= \psi \circ p^{-1}\colon X\dasharrow \hat X$. Note that $\hat\phi$ is an isomorphism over $U$.
\begin{custom}[Claim]
$\hat X$ is a $\mathbb{Q}$-factorial symplectic variety; in particular, $K_{\hat X}$ is trivial.
\end{custom}
\begin{proof}
Since $\hat X$ is the outcome of a minimal model program that started on the $\mathbb{Q}$-factorial variety $\widetilde X$, it is likewise $\mathbb{Q}$-factorial, e.g.~see~\cite[Prop.~3.36(1) and Prop.~3.37(1)]{Kollar-Mori}.

Since $E$, the exceptional locus of $p$, is contracted by $\psi$, the varieties $X$ and $\hat X$ are isomorphic in codimension one. As a consequence, the canonical sheaf $\omega_{\hat X}$ of $\hat X$ is trivial. Furthermore, the symplectic form $\sigma$ on $X$ induces a symplectic form $\hat \sigma$ on a smooth open subset $V \subset \hat X$ with complement of codimension two in $\hat X$. Since the sheaf of holomorphic 2--forms on $\hat X_\sm$ is locally free, $\hat \sigma$ extends to a holomorphic $2$--form $\hat\sigma'$ on $\hat X_\sm$. As $K_{\hat X}$ is trivial, the extended form $\hat \sigma'$ remains everywhere non--degenerate.
Taking a smooth resolution of the indeterminacies of $\psi$, we obtain a common resolution of $X$ and $\hat X$ to which $\hat \sigma'$ extends as a holomorphic 2--form. Consequently, $\hat X$ is a symplectic variety as claimed.
\end{proof}

\begin{custom}[Claim]
$\hat X$ has terminal singularities; hence, as a consequence of Proposition~\ref{prop: general namikawa} it is a smooth hyperk\"ahler manifold.
\end{custom}
\begin{proof}
 As noted already in the proof of the previous claim, the varieties $\hat X$ and $X$ are isomorphic in codimension one. Hence, if $Z$ is a common resolution of singularities 
$$ \begin{xymatrix}{
   & Z \ar[rd]^{\hat \pi} \ar[ld]_{\pi}&  \\
X &  & \hat X 
}
   \end{xymatrix}
$$
of $\hat X$ and $X$, every $\hat \pi$-exceptional divisor is $\pi$-exceptional and vice versa. Therefore, writing out the discrepancy formula for both resolutions we obtain
$$ K_Z \;\equiv\; \underset{= 0}{\underbrace{\pi^*(K_{X})}} + \sum_{E \text{ $\pi$-exc.}} \underset{\geq 1}{\underbrace{a(E, X)}} E \;\;= \;\; \underset{= 0}{\underbrace{\hat\pi^*(K_{\hat X})}}+ \sum_{E \text{ $\hat \pi$-exc.}}a(E, X) E. $$Consequently, $\hat X$ is terminal, as claimed.
\end{proof}

\begin{custom}[Claim]
The divisor $\hat \phi_*(D)$ is linearly equivalent to ${\hat f}^*(A)$; in particular, it is nef and basepoint--free.
\end{custom}
\begin{proof}
We have seen above that $\hat\phi\colon X\dasharrow \hat X$ is an isomorphism in codimension one. It follows that
\[ \hat \phi_*(D) = \hat\phi_*(f^*(A))= \hat f ^*(A).\]
In particular,  the divisor $\hat \phi_*(D)$  is nef, and the map associated to $|\hat \phi_*(D)|$ factors as \[\hat X \overset{\hat f}{\longrightarrow} B \into \IP^N. \qedhere\]
\end{proof}
This concludes the proof of Lemma~\ref{lem:mmp}.

\subsection{Further applications of the Holomorphic Model Lemma}
Lemma \ref{lem:mmp} allows to generalise some results on fibrations on hyperk\"ahler manifolds to the almost holomorphic case. As an example we give a generalisation of Matsushita's results summarised in Theorem \ref{thm: matsushita}.

\begin{theo}\label{thm: almost holomorphic is Lagrangian}
 Let $X$ be a projective hyperk\"ahler manifold and $f\colon X \dasharrow B$ an almost holomorphic map with connected fibres onto a normal projective variety $B$ such that  $0<\dim B< \dim X$.  Then $\dim B=\frac 1 2 \dim X$, and $f$ is an almost holomorphic Lagrangian fibration.
\end{theo}
\begin{proof}
By Lemma \ref{lem:mmp} there is a holomorphic model $f'\colon X'\to B' $ on a possibly different hyperk\"ahler manifold birational to $X$, and isomorphic to $X$ near the general fibre of $f'$. By Matsushita's theorem (see Theorem \ref{thm: matsushita})  $\dim B = \dim B' = \frac 1 2 \dim X$, and $f'$ is a Lagrangian fibration. Thus, $f$ is  an almost holomorphic Lagrangian fibration.
\end{proof}

\begin{rem}
Note that Theorem \ref{thm: holomorphic model} proves the second part of the Hyperk\"ahler SYZ--Conjecture (see Section \ref{sect:SYZ}) under the additional assumption that  some multiple of $\kl$ defines an almost holomorphic fibration. Indeed, in this case Theorem \ref{thm: almost holomorphic is Lagrangian} implies that we get an almost holomorphic Lagrangian fibration on $X$ and Theorem \ref{thm: holomorphic model} implies that we can find a holomorphic Lagrangian fibration on a birational hyperk\"ahler manifold $X'$. This implies the statement of the conjecture, because birational hyperk\"ahler manifolds are deformation equivalent \cite[Thm.~4.6]{huybrechts99}.
\end{rem}

\begin{rem}\label{rem: further devel} 
 Note that the holomorphic model lemma, Lemma~\ref{lem:mmp}, yields a good minimal model of the pair $(X, D)$. Consequently, any minimal model program with scaling for $(X,D)$  will terminate in a good minimal model. Matsushita has announced an argument \cite{MatsushitaPreprint}, which deduces from termination of a minimal model program that any almost holomorphic Lagrangian fibration on a projective hyperk\"ahler manifold becomes holomorphic after a suitable modification of the base (as in Lemma~\ref{lem:mmp}). Together with the result of Hwang and Weiss \cite{HwangWeiss} mentioned in the introduction 
and Theorem~\ref{notstabproj} this would provide a complete positive answer to Beauville's question also in the projective case. 
\end{rem}

\newcommand{\etalchar}[1]{$^{#1}$}


\begin{thebibliography}{ELMNP06}

\bibitem[AC08]{ame-cam08}
Ekaterina Amerik and Fr{\'e}d{\'e}ric Campana.
\newblock Fibrations m\'eromorphes sur certaines vari\'et\'es \`a fibr\'e
  canonique trivial.
\newblock {\em Pure Appl. Math. Q.}, 4(2, part 1):509--545, 2008.

\bibitem[Bar75]{barlet75}
Daniel Barlet.
\newblock Espace analytique r\'eduit des cycles analytiques complexes compacts
  d'un espace analytique complexe de dimension finie.
\newblock In {\em Fonctions de plusieurs variables complexes, {II} ({S}\'em.
  {F}ran\c cois {N}orguet, 1974--1975)}, pages 1--158. Lecture Notes in Math.,
  Vol. 482. Springer, Berlin, 1975.

\bibitem[Bar99]{barlet99}
Daniel Barlet.
\newblock How to use the cycle space in complex geometry.
\newblock In {\em Several complex variables ({B}erkeley, {CA}, 1995--1996)},
  volume~37 of {\em Math. Sci. Res. Inst. Publ.}, pages 25--42. Cambridge Univ.
  Press, Cambridge, 1999.

\bibitem[BCE{\etalchar{+}}02]{bcekprsw}
Thomas Bauer, Fr{\'e}d{\'e}ric Campana, Thomas Eckl, Stefan Kebekus, Thomas
  Peternell, S{\l}awomir Rams, Tomasz Szemberg, and Lorenz Wotzlaw.
\newblock A reduction map for nef line bundles.
\newblock In {\em Complex geometry ({G}\"ottingen, 2000)}, pages 27--36.
  Springer, Berlin, 2002.

\bibitem[BCHM10]{BCHM}
Caucher Birkar, Paolo Cascini, Christopher~D. Hacon, and James
  M\textsuperscript{c}Kernan.
\newblock Existence of minimal models for varieties of log general type.
\newblock {\em J. Amer. Math. Soc.}, 23(2):405--468, 2010.

\bibitem[Bea83]{beauville83}
Arnaud Beauville.
\newblock Vari\'et\'es {K}\"ahleriennes dont la premi\`ere classe de {C}hern
  est nulle.
\newblock {\em J. Differential Geom.}, 18(4):755--782, 1983.

\bibitem[Bea00]{beauville00}
Arnaud Beauville.
\newblock Symplectic singularities.
\newblock {\em Invent. Math.}, 139(3):541--549, 2000.

\bibitem[Bea11]{beauville10}
Arnaud Beauville.
\newblock Holomorphic symplectic geometry: a problem list.
\newblock In {\em {C}omplex and {D}ifferential {G}eometry, {C}onference held at
  {L}eibniz {U}niversit\"at {H}annover, {S}eptember 14 -- 18, 2009}, volume~8
  of {\em Springer Proceedings in Mathematics}. Springer, 2011.

\bibitem[Cam06]{campana06}
Fr{\'e}d{\'e}ric Campana.
\newblock Isotrivialit\'e de certaines familles k\"ahl\'eriennes de
  vari\'et\'es non projectives.
\newblock {\em Math. Z.}, 252(1):147--156, 2006.

\bibitem[CL10]{cascini-lazic10}
Paolo Cascini and Vladimir Lazi\'c.
\newblock New outlook on the {M}inimal {M}odel Program I.
\newblock {\em Duke Math. J.}, 161(12):2415--2467, 2012.

\bibitem[COP10]{cop}
Fr\'ed\'eric Campana, Keiji Oguiso, and Thomas Peternell.
\newblock Non-algebraic hyperk\"ahler manifolds.
\newblock {\em J. Differential Geom.}, 85(3):397--424, 2010.

\bibitem[Dem01]{DemaillyICTP}
Jean-Pierre Demailly.
\newblock Multiplier ideal sheaves and analytic methods in algebraic geometry.
\newblock In {\em School on {V}anishing {T}heorems and {E}ffective {R}esults in
  {A}lgebraic {G}eometry ({T}rieste, 2000)}, volume~6 of {\em ICTP Lect.
  Notes}, pages 1--148. Abdus Salam Int. Cent. Theoret. Phys., Trieste, 2001.

\bibitem[DM96]{DonagiMarkman}
Ron Donagi and Eyal Markman.
\newblock Spectral covers, algebraically completely integrable, {H}amiltonian
  systems, and moduli of bundles.
\newblock In {\em Integrable systems and quantum groups ({M}ontecatini {T}erme,
  1993)}, volume 1620 of {\em Lecture Notes in Math.}, pages 1--119. Springer,
  Berlin, 1996.

\bibitem[Dou66]{douady66}
Adrien Douady.
\newblock Le probl\`eme des modules pour les sous-espaces analytiques compacts
  d'un espace analytique donn\'e.
\newblock {\em Ann. Inst. Fourier (Grenoble)}, 16(fasc. 1):1--95, 1966.

\bibitem[DT10]{TelemanDloussky}
Georges Dloussky and Andrei Teleman.
\newblock Infinite bubbling in non-{K}\"ahlerian geometry.
\newblock {\em Math. Ann.}, 353(4):1283--1314, 2012.

\bibitem[Fuj78]{fujiki78b}
Akira Fujiki.
\newblock Closedness of the {D}ouady spaces of compact {K}\"ahler spaces.
\newblock {\em Publ. Res. Inst. Math. Sci.}, 14(1):1--52, 1978.

\bibitem[Fuj82]{fujiki82}
Akira Fujiki.
\newblock On the {D}ouady space of a compact complex space in the category
  $\mathscr{C}$.
\newblock {\em Nagoya {M}ath. {J}ournal}, 85:189--211, 1982.

\bibitem[GPR94]{SCVVII}
Hans Grauert, Thomas Peternell, and Reinhold Remmert, editors.
\newblock {\em Several complex variables {VII}, Sheaf-theoretical methods in
  complex analysis}, volume~74 of {\em Encyclopaedia of Mathematical Sciences}.
\newblock Springer-Verlag, Berlin, 1994.

\bibitem[HK10]{HaconKovacs}
Christopher D.~Hacon and S\'andor J.~Kov\'acs.
\newblock {\em Classification of higher-dimensional algebraic varieties}, vol.~41 of {\em Oberwolfach Seminars}.
\newblock Birkh\"auser, Basel, 2010.

\bibitem[Huy99]{huybrechts99}
Daniel Huybrechts.
\newblock Compact hyperk\"ahler manifolds: basic results.
\newblock {\em Invent. Math.}, 135(1):63--113, 1999.

\bibitem[Huy03]{HuybrechtsHKM}
Daniel Huybrechts.
\newblock Compact {H}yperk\"ahler {M}anifolds.
\newblock In {\em Calabi-{Y}au manifolds and related geometries
  ({N}ordfjordeid, 2001)}, Universitext, pages 161--225. Springer, Berlin,
  2003.

\bibitem[Hwa01]{HwangVMRTSurvey}
Jun-Muk Hwang.
\newblock Geometry of minimal rational curves on {F}ano manifolds.
\newblock In {\em School on {V}anishing {T}heorems and {E}ffective {R}esults in
  {A}lgebraic {G}eometry ({T}rieste, 2000)}, volume~6 of {\em ICTP Lect.
  Notes}, pages 335--393. Abdus Salam Int. Cent. Theoret. Phys., Trieste, 2001.

\bibitem[HW12]{HwangWeiss}
Jun-Muk Hwang and Richard M. Weiss.
\newblock Webs of Lagrangian Tori in Projective Symplectic Manifolds.
\newblock {\em Invent.~Math.}, Online First, \href{http://dx.doi.org/10.1007/s00222-012-0407-2}{DOI:10.1007/s00222-012-0407-2}


\bibitem[HX11]{HaconXu}
Christopher~D. Hacon and Chenyang Xu.
\newblock Existence of log canonical closures.
\newblock arXiv:1105.1169, 2011.

\bibitem[Kaw08]{kawamata08}
Yujiro Kawamata.
\newblock Flops connect minimal models.
\newblock {\em Publ. Res. Inst. Math. Sci.}, 44(2):419--423, 2008.

\bibitem[KM98]{Kollar-Mori}
J{\'a}nos Koll{\'a}r and Shigefumi Mori.
\newblock {\em Birational geometry of algebraic varieties}, volume 134 of {\em
  Cambridge Tracts in Mathematics}.
\newblock Cambridge University Press, Cambridge, 1998.

\bibitem[KS60]{kodaira-spencer60}
Kunihiko Kodaira and Donald C.~Spencer.
\newblock On deformations of complex analytic structures. {III}. {S}tability
  theorems for complex structures.
\newblock {\em Ann. of Math. (2)}, 71:43--76, 1960.

\bibitem[Lai11]{lai11}
Ching-Jui Lai.
\newblock Varieties fibred by good minimal models.
\newblock {\em Math. Ann.}, 350(3):533--547, 2011.

\bibitem[Laz04]{LazarsfeldII}
Robert Lazarsfeld.
\newblock {\em Positivity in algebraic geometry {II}. Positivity for vector
  bundles, and multiplier ideals}, volume~49 of {\em Ergebnisse der Mathematik
  und ihrer Grenzgebiete. 3. Folge.}
\newblock Springer-Verlag, Berlin, 2004.

\bibitem[Leh11]{LehnDiss}
Christian Lehn.
\newblock {\em Symplectic Lagrangian Fibrations}.
\newblock Dissertation, Johannes--{G}utenberg--{U}niversit\"at Mainz, 2011.


\bibitem[Lie78]{lieberman}
David~I. Lieberman.
\newblock Compactness of the {C}how scheme: applications to automorphisms and
  deformations of {K}\"ahler manifolds.
\newblock In {\em Fonctions de plusieurs variables complexes, {III} ({S}\'em.
  {F}ran\c cois {N}orguet, 1975--1977)}, volume 670 of {\em Lecture Notes in
  Math.}, pages 140--186. Springer, Berlin, 1978.

\bibitem[Mat99]{matsushita99}
Daisuke Matsushita.
\newblock On fibre space structures of a projective irreducible symplectic
  manifold.
\newblock {\em Topology}, 38(1):79--83, 1999.

\bibitem[Mat00]{matsushita00}
Daisuke Matsushita.
\newblock Equidimensionality of {L}agrangian fibrations on holomorphic
  symplectic manifolds.
\newblock {\em Math. Res. Lett.}, 7(4):389--391, 2000.

\bibitem[Mat01]{matsushita01}
Daisuke Matsushita.
\newblock Addendum: ``{O}n fibre space structures of a projective irreducible
  symplectic manifold'' [{T}opology {\bf 38} (1999), no.\ 1, 79--83].
\newblock {\em Topology}, 40(2):431--432, 2001.

\bibitem[Mat03]{matsushita03}
Daisuke Matsushita.
\newblock Holomorphic symplectic manifolds and {L}agrangian fibrations.
\newblock {\em Acta Appl. Math.}, 75(1-3):117--123, 2003.

\bibitem[Mat08]{matsushita08}
Daisuke Matsushita.
\newblock On nef reductions of projective irreducible symplectic manifolds.
\newblock {\em Math. Z.}, 258(2):267--270, 2008.

\bibitem[Mat09]{matsushita09}
Daisuke Matsushita.
\newblock On deformations of {L}agrangian fibrations.
\newblock Preprint arXiv:0903.2098, 2009.

\bibitem[Mat12]{MatsushitaPreprint}
Daisuke Matsushita.
\newblock On almost holomorphic Lagrangian fibrations.
\newblock Preprint arXiv:1209.1194, 2012.

\bibitem[Moi67]{moishezon67}
Boris G.~Moishezon.
\newblock On n-dimensional compact complex varieties with n algebraically
  independent meromorphic functions. i-iii.
\newblock {\em Am. Math. Soc., Transl., II. Ser.}, 63:51--177, 1967.

\bibitem[Nam06]{namikawa06}
Yoshinori Namikawa.
\newblock On deformations of {$\mathbb Q$}-factorial symplectic varieties.
\newblock {\em J. Reine Angew. Math.}, 599:97--110, 2006.

\bibitem[Ran92]{Ra92}
Ziv Ran.
\newblock Lifting of cohomology and unobstructedness of certain holomorphic
  maps.
\newblock {\em Bull. Amer. Math. Soc. (N.S.)}, 26(1):113--117, 1992.

\bibitem[Saw03]{sawon03}
Justin Sawon.
\newblock Abelian fibred holomorphic symplectic manifolds.
\newblock {\em Turkish J. Math.}, 27(1):197--230, 2003.

\bibitem[Ver10]{verbitsky10}
Misha Verbitsky.
\newblock Hyper{K}\"ahler {SYZ} conjecture and semipositive line bundles.
\newblock {\em Geom. Funct. Anal.}, 19(5):1481--1493, 2010.

\bibitem[Voi92]{voisin92}
Claire Voisin.
\newblock Sur la stabilit{\'e} des sous-vari{\'e}t{\'e}s lagrangiennes des
  vari{\'e}t{\'e}s symplectiques holomorphes.
\newblock In {\em Complex projective geometry ({T}rieste, 1989/{B}ergen,
  1989)}, volume 179 of {\em London Math. Soc. Lecture Note Ser.}, pages
  294--303. Cambridge Univ. Press, Cambridge, 1992.

\bibitem[Wie02]{wierzba}
Jan Wierzba.
\newblock Birational {G}eometry of {Symplectic} 4--folds.
\newblock Unpublished manuscript, 2002. Currently available at \href{http://www.mimuw.edu.pl/~jarekw/postscript/bir4fd.ps}{www.mimuw.edu.pl/$\sim$jarekw/postscript/bir4fd.ps}.

\end{thebibliography}

\end{document}